\newtheorem{theorem}{Theorem}[section]
\newtheorem{lemma}[theorem]{Lemma}
\newtheorem{proposition}[theorem]{Proposition}
\newtheorem{corollary}[theorem]{Corollary}
\theoremstyle{definition}
\newtheorem{definition}[theorem]{Definition}
\theoremstyle{remark}
\numberwithin{equation}{section}
\newcommand{\SSYT}{\ensuremath{\mathrm{SSYT}}}
\newcommand{\QYT}{\ensuremath{\mathrm{QYT}}}
\newcommand{\SCT}{\ensuremath{\mathrm{SCT}}}
\newcommand{\KT}{\ensuremath{\mathrm{KT}}}
\newcommand{\QKT}{\ensuremath{\mathrm{QKT}}}
\newcommand{\qKT}{\ensuremath{\mathrm{qKT}}}
\newcommand{\QqKT}{\ensuremath{\mathrm{QqKT}}}
\newcommand{\KM}{\ensuremath{\mathrm{KM}}}
\newcommand{\wt}{\ensuremath{\mathrm{wt}}}
\newcommand{\destand}{\ensuremath{\mathrm{dst}}}
\newcommand{\key}{\ensuremath{\kappa}}
\newcommand{\qkey}{\ensuremath{\mathfrak{Q}}}
\newcommand{\Fund}{\ensuremath{\mathfrak{F}}}
\newcommand{\flatten}{\ensuremath{\mathrm{flat}}}
\newcommand{\sort}{\ensuremath{\mathrm{sort}}}
\newcommand{\rev}{\ensuremath{\mathrm{rev}}}
\newcommand{\lsort}{\ensuremath{\mathrm{lsort}}}
\newcommand{\lswap}{\ensuremath{\mathrm{lswap}}}
\newcommand{\Qlswap}{\ensuremath{\mathrm{Qlswap}}}
\newcommand{\Des}{\ensuremath{\mathrm{Des}}}
\newlength\cellsize \setlength\cellsize{12\unitlength}
\newcommand\cellify[1]{\def\thearg{#1}\def\nothing{}%
\ifx\thearg\nothing\vrule width0pt height\cellsize depth0pt%
  \else\hbox to 0pt{\usebox2\hss}\fi%
  \vbox to 12\unitlength{\vss\hbox to 12\unitlength{\hss$#1$\hss}\vss}}
\newcommand\tableau[1]{\vtop{\let\\=\cr
\setlength\baselineskip{-12000pt}
\setlength\lineskiplimit{12000pt}
\setlength\lineskip{0pt}
\halign{&\cellify{##}\cr#1\crcr}}}
\newcommand{\cirbox}[1]{\makebox[0pt]{\ $\largecircle$}\ensuremath\scriptstyle{#1}}
\newcommand{\diabox}[1]{\makebox[0pt]{\ $\largediamond$}\ensuremath\scriptstyle{#1}}
\newcommand{\sqbox}[1]{\makebox[0pt]{\ $\largesquare$}\ensuremath\scriptstyle{#1}}
\begin{document}


\title[Kohnert tableaux and quasi-Schur functions]{Kohnert tableaux and a lifting of \\ quasi-Schur functions}  

\author[S. Assaf]{Sami Assaf}
\address{Department of Mathematics, University of Southern California, Los Angeles, CA 90089}
\email{shassaf@usc.edu}

\author[D. Searles]{Dominic Searles}
\address{Department of Mathematics, University of Southern California, Los Angeles, CA 90089}
\email{dsearles@usc.edu}

\subjclass[2010]{Primary 05E05; Secondary 05E10}

\keywords{Demazure characters, key polynomials, slide polynomials, Schur functions, quasi-Schur functions, Kohnert tableaux, quasi-key polynomials}

\begin{abstract}
We introduce the quasi-key basis of the polynomial ring which contains the quasi-Schur polynomials of Haglund, Luoto, Mason and van Willigenburg. We prove that stable limits of quasi-key polynomials are quasi-Schur functions, thus lifting the quasi-Schur basis of quasisymmetric polynomials to the full polynomial ring. The new tool we introduce for this purpose is the combinatorial model of Kohnert tableaux. We use this model to prove that key polynomials expand positively in quasi-key polynomials which in turn expand positively in fundamental slide polynomials introduced earlier by the authors. We give simple combinatorial formulas for these expansions in terms of Kohnert tableaux, lifting the parallel expansions of a Schur function into quasi-Schur functions into fundamental quasisymmetric functions. We further utilize Kohnert tableaux to find the precise point at which the fundamental slide expansion of a key polynomial stabilizes.
\end{abstract}

\maketitle

%
\section{Introduction}
%
\label{sec:introduction}

Lascoux and Sch{\"u}tzenberger \cite{LS90} studied a basis for polynomials, called \emph{key polynomials}, that are a lifting of the Schur functions from the ring of symmetric functions to the full polynomial ring. This parallels Macdonald's \cite{Mac91} result that Schubert polynomials are a lifting of Stanley symmetric functions \cite{Sta84}. This raises the more general question: how does one pull back symmetric functions to the polynomial ring? Moreover, can it be done in such a way that properties relating bases in symmetric functions still hold in the polynomial ring? The answer, perhaps, is to broaden the question to include not only symmetric functions, but quasisymmetric functions as well. 

In earlier work \cite{AS17}, the authors introduced the \emph{fundamental slide polynomials} which are pull backs of the fundamental quasisymmetric functions of Gessel \cite{Ges84}. 
In this paper, we make use of the fundamental slide polynomials to lift another important basis for quasisymmetric functions to the polynomial ring, namely the quasisymmetric Schur functions, which we call \emph{quasi-Schur functions}, of Haglund, Luoto, Mason and van Willigenburg \cite{HLMvW11}. The quasi-Schur functions have many interesting applications to symmetric functions, quasisymmetric functions and polynomials \cite{BTvW16,HLMvW11-2,LM11}. 

\begin{figure}[ht]
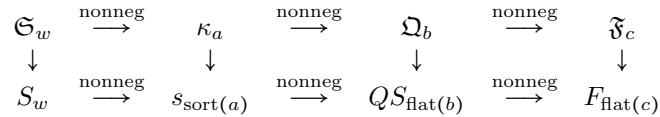

  \begin{displaymath}
    \begin{array}{ccccccc}
      \mathfrak{S}_w & \stackrel{\mathrm{nonneg}}{\longrightarrow} &
      \key_a & \stackrel{\mathrm{nonneg}}{\longrightarrow} &
      \qkey_b & \stackrel{\mathrm{nonneg}}{\longrightarrow} &
      \Fund_c \\
      \downarrow & &
      \downarrow & &
      \downarrow & &
      \downarrow \\
      S_w & \stackrel{\mathrm{nonneg}}{\longrightarrow} &
      s_{\sort(a)} & \stackrel{\mathrm{nonneg}}{\longrightarrow} &
      QS_{\flatten(b)} & \stackrel{\mathrm{nonneg}}{\longrightarrow} &
      F_{\flatten(c)}
    \end{array}
  \end{displaymath}
  \caption{\label{fig:big}An illustration containing our main results: a right arrow from $f$ to $g$ indicates that $f$ expands nonnegatively into the basis $\{g\}$, and a down arrow from $f$ to $g$ indicates that $f$ stabilizes to $g$.}
\end{figure}

The combinatorial model we introduce for this purpose is based on Kohnert's \cite{Koh91} simple algorithmic model for key polynomials in terms of diagrams. We label the cells of a Kohnert diagram with positive integers in a unique way, creating \emph{Kohnert tableaux}. In essence, these labelings keep track of how cells move under Kohnert's algorithm. In particular, when multiple paths in Kohnert's algorithm yield the same diagram, the labeling gives a canonical choice among these paths. 

While the labeling itself does not affect the monomial associated to a Kohnert diagram, Kohnert tableaux have several advantages: for example, they permit a static description of all Kohnert diagrams associated to a given key polynomial, independently of Kohnert's algorithm. Most importantly for us, we need the refined information encoded by this labeling to give definitions that are central for our main results. We define a condition, called quasi-Yamanouchi, on Kohnert tableaux which gives a compact, positive expansion for key polynomials into the fundamental slide basis, similar to using standard Young tableaux in place of semi-standard Young tableaux for Schur functions (the former is a finite set while the latter is infinite). Moreover, by imposing an additional restriction on Kohnert tableaux, we partition the terms in the fundamental slide expansion of a key polynomial to form an intermediate basis that we call the \emph{quasi-key polynomials}. 

The fundamental slide expansion of quasi-key polynomials is indexed by quasi-Yamanouchi quasi-Kohnert tableaux, and the fundamental quasisymmetric expansion of quasi-Schur polynomials is indexed by standard composition tableaux. We give a simple bijection between these two families of tableaux to prove that quasi-key polynomials stabilize to quasi-Schur functions. In this way, our formula for the expansion of a quasi-key polynomial into fundamental slide polynomials also lifts the formula of \cite{HLMvW11} for the expansion of a quasi-Schur polynomial into fundamental quasisymmetric polynomials.

We also use Kohnert tableaux to study stability of key polynomials. We give a simple bijection between quasi-Yamanouchi Kohnert tableaux and quasi-Yamanouchi semistandard Young tableaux. Along with earlier results from \cite{AS17}, this bijection allows us to determine the precise point when the fundamental slide expansion of a key polynomial stabilizes, to prove that stability occurs the first time the expansion is stable from one step to the next, and to recover the fact that key polynomials stabilize to Schur functions. As a corollary, we show that our formula for the expansion of a key polynomial into quasi-keys lifts the formula of \cite{HLMvW11} for the expansion of a Schur function into quasi-Schurs.

A further consequence of the positive expansion of key polynomials and quasi-key polynomials into fundamental slide polynomials comes from the result in \cite{AS17} where the authors give a positive combinatorial formula for the structure constants of the fundamental slide basis. One can use this formula to obtain a positive expansion, in the fundamental slide basis, for the product of two key polynomials, a key polynomial and a quasi-key polynomial, or two quasi-key polynomials, and moreover it has been shown by Searles \cite{Searles} that the product of a key polynomial and a quasi-key polynomial expands positively in the quasi-key basis. This positivity result stands in sharp contrast to the Demazure atoms of Lascoux and Sch{\"u}tzenberger \cite{LS90}; while key polynomials expand positively in atoms, the basis of atoms does not have positive structure constants. It remains open as to whether a product of key polynomials is atom-positive. 

\subsection*{Acknowledgments}

The authors are grateful to V. Reiner, S. van Willigenburg and A. Yong for helpful discussion about key polynomials and quasi-Schur functions. 

%
\section{Key polynomials}
%
\label{sec:key}

\subsection{Kohnert diagrams}
\label{sec:key-def}

A \emph{weak composition} $a$ of length $n$ is a sequence of nonnegative integers $a = (a_1,\ldots,a_n)$. Let $x^a$ denote the monomial $x_1^{a_1} \cdots x_n^{a_n}$. We work primarily in the polynomial ring $\mathbb{Z}[x_1,\ldots,x_n]$, which has a basis $\{x^a\}$ indexed by weak compositions of length $n$. 

The key polynomials, originally defined as characters of certain modules by Demazure \cite{Dem74} and studied combinatorially by Lascoux and Sch{\"u}tzenberger \cite{LS90}, form another basis for $\mathbb{Z}[x_1,\ldots,x_n]$. Key polynomials may be defined in various ways, see \cite{RS95} for a thorough treatment, but for our purposes we use a combinatorial model due to Kohnert \cite{Koh91}. 

A \emph{diagram} is an array of finitely many cells in $\mathbb{N} \times \mathbb{N}$, with coordinate (French) indexing of rows beginning with the lowest row at index $1$. Define the \emph{key diagram of $a$}, denoted by $D_a$, to be the diagram with $a_i$ cells in row $i$, left-justified.

A \emph{Kohnert move} on a diagram selects the rightmost cell of a given row and moves the cell to the first available position below, jumping over other cells in its way as needed. Let $\KM(a)$ denote the set of all diagrams that can be obtained by applying a series of Kohnert moves to the key diagram of $a$. For example, see Figure~\ref{fig:kohnert_diagrams}.

\begin{figure}[ht]
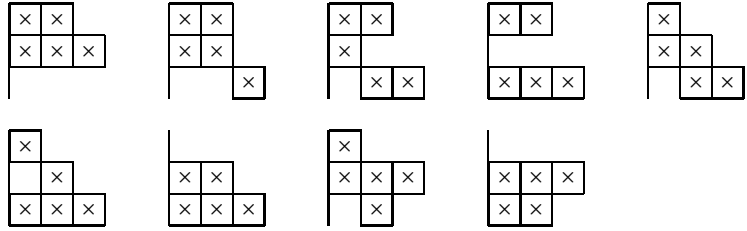

  \begin{center}
    \begin{displaymath}
      \begin{array}{c@{\hskip 2\cellsize}c@{\hskip 2\cellsize}c@{\hskip 2\cellsize}c@{\hskip 2\cellsize}c}
        \vline\tableau{ \times  & \times \\ \times  & \times  & \times  \\  } &
        \vline\tableau{ \times  & \times \\ \times  & \times \\  &  & \times   } &
        \vline\tableau{ \times  & \times \\ \times  \\  &  \times  & \times   } &
        \vline\tableau{ \times  & \times \\   \\ \times  &  \times  & \times   }  &
        \vline\tableau{ \times  \\  \times  & \times \\  &  \times  & \times   } \\ \\
        \vline\tableau{ \times  \\   & \times \\ \times  &  \times  & \times  } &
        \vline\tableau{  \\  \times  & \times \\ \times  &  \times  & \times  } &
        \vline\tableau{ \times \\ \times  & \times  & \times  \\  & \times } &
        \vline\tableau{ \\ \times  & \times  & \times  \\ \times  & \times } 
      \end{array}
    \end{displaymath}
    \caption{\label{fig:kohnert_diagrams}The set $\KM(0,3,2)$ of Kohnert diagrams for $(0,3,2)$.}
  \end{center}
\end{figure}

\begin{theorem}[\cite{Koh91}]
  The key polynomial indexed by $a$, denoted by $\key_a$, is given by
  \begin{equation}
    \key_a = \sum_{D \in \KM(a)} x^{\wt(D)},
  \end{equation}
  where $\wt(D)$ is the weak composition whose $i$th part gives the number of cells in row $i$.
\end{theorem}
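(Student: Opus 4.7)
The plan is to prove this by induction on the weak compositions, using the Demazure-operator characterization of key polynomials: $\key_a = x^a$ when $a$ is weakly decreasing, and $\key_{s_i \cdot a} = \pi_i(\key_a)$ whenever $a_i > a_{i+1}$, where $\pi_i(f) = (x_i f - x_{i+1}\, s_i f)/(x_i - x_{i+1})$ is the isobaric divided difference operator. Setting $G_a := \sum_{D \in \KM(a)} x^{\wt(D)}$, I would show $G_a = \key_a$ by verifying these two defining properties.

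For the base case, assume $a_1 \geq a_2 \geq \cdots \geq a_n$. Then in the key diagram $D_a$, every cell in row $i+1$ sits directly above a cell in row $i$, so no Kohnert move is possible: the rightmost cell of row $i+1$ has no empty position below it to land in. Hence $\KM(a) = \{D_a\}$, and $G_a = x^a = \key_a$.

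For the inductive step, suppose $a_i < a_{i+1}$ and let $b = s_i \cdot a$, so that $b$ is obtained from $a$ by swapping entries $i$ and $i+1$ and by induction $G_b = \key_b$. I would prove $G_a = \pi_i(G_b)$ by partitioning $\KM(a)$ into \emph{$\pi_i$-strings}: for each diagram $E \in \KM(b)$ with weight $\mu$, I would construct a string of diagrams in $\KM(a)$ whose weights are precisely $(\mu_1,\ldots,\mu_i - k, \mu_{i+1} + k,\ldots,\mu_n)$ for $0 \leq k \leq \mu_i - \mu_{i+1}$, matching the monomials in $\pi_i(x^\mu)$. The string is built by locating columns where $E$ has a cell in row $i$ but not in row $i+1$; promoting these cells to row $i+1$ one at a time, from the appropriate end, gives the members of the string. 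Conversely, every $D \in \KM(a)$ must be shown to arise uniquely in exactly one such string, which can be read off by the reverse procedure on the rows $i$ and $i+1$ of $D$.

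The main obstacle is making the string decomposition rigorous. One must check that the promotions/demotions of cells between rows $i$ and $i+1$ really do send Kohnert diagrams of $b$ to Kohnert diagrams of $a$ and vice versa, and that nothing in higher rows obstructs this — for instance, cells that originally descended from above into row $i+1$ can interact with cells native to row $i$, and the matching must respect these dependencies. This is precisely the delicate step where Kohnert's original argument was incomplete; a careful proof (in the spirit of Winkel's or Reiner--Shimozono's completions) requires establishing an explicit column-by-column matching between cells in rows $i$ and $i+1$ and verifying its invariance under the full sequence of Kohnert moves used to reach a given diagram. Once this matching is in place, the identity $G_a = \pi_i(G_b)$ follows by summing over strings, and the induction closes.
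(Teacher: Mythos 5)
The paper does not actually prove this statement: it is quoted directly from Kohnert \cite{Koh91} and serves as the starting point for everything that follows, so there is no internal proof to compare yours against. Judged on its own terms, your strategy --- induction via the isobaric divided difference recursion $\key_{s_i\cdot b}=\pi_i(\key_b)$, with a base case of weakly decreasing $a$ and a string decomposition relating $\KM(a)$ to $\KM(b)$ --- is the classical route, and your base case is correct: for weakly decreasing $a$ every column of $D_a$ is filled contiguously from the bottom, so no Kohnert move is available and $\KM(a)=\{D_a\}$.

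However, the proposal has a genuine gap, only part of which you acknowledge. First, the entire content of the theorem lives in the string decomposition, and you describe only what it should accomplish, not how to construct it or why it is well defined; as you note yourself, this is exactly the step at which Kohnert's original argument was incomplete, so deferring it defers the whole proof. Second, and more concretely, the setup as stated cannot work: a diagram $E\in\KM(b)$ need not satisfy $\wt(E)_i\ge\wt(E)_{i+1}$, and when $\wt(E)_i<\wt(E)_{i+1}$ the polynomial $\pi_i\bigl(x^{\wt(E)}\bigr)$ is zero or a sum of \emph{negative} monomials, so there is no ``string of diagrams in $\KM(a)$ whose weights match the monomials of $\pi_i(x^{\wt(E)})$.'' For example, $\KM(0,3,2)$ contains diagrams of weights $(2,1,2)$ and $(3,0,2)$; applying $\pi_2$ yields $0$ and $-x^{(3,1,1)}$ respectively, and the latter must cancel against a positive contribution coming from a different diagram. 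A correct argument must therefore first organize $\KM(b)$ itself into $s_i$-strings (so that $\pi_i$ acts cleanly on whole strings and the signed terms cancel internally), or otherwise control the cancellation explicitly; a one-string-in-$\KM(a)$-per-diagram-of-$\KM(b)$ correspondence is not available. Until both the sign issue and the invariance of the row-$i$/row-$(i+1)$ matching under arbitrary sequences of Kohnert moves are established, this is a roadmap rather than a proof.
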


For example, from Figure~\ref{fig:kohnert_diagrams} we can compute
\begin{displaymath}
  \key_{032} = x^{032} + x^{122} + x^{212} + x^{302} + x^{221} + x^{311} + x^{320} + x^{131} + x^{230}.
\end{displaymath}

Given weak compositions $a,b$ of length $n$, say that $b$ \emph{dominates} $a$, denoted by $b \geq a$, if
\begin{equation}
  b_1 + \cdots + b_i \geq a_1 + \cdots + a_i
\end{equation}
for all $i=1,\ldots,n$. Note that this extends the usual dominance order on partitions and is a suborder of lexicographic order.

Since Kohnert moves push cells of the diagram down, the associated compositions increase in dominance order, ensuring that the monomial $x^a$ associated to the key diagram of $a$ is the unique minimal term in the monomial expansion of $\key_a$. In particular, this shows that the key polynomials are upper-unitriangular (in reverse lexicographic order) with respect to the monomial basis, and so they form another basis for $\mathbb{Z}[x_1,\ldots,x_n]$.

The following characterization of \emph{Kohnert diagrams}, that is, the closure of key diagrams under Kohnert moves, will be used throughout the paper.

\begin{lemma}
  A diagram $D$ can be obtained via a series of Kohnert moves on a key diagram if and only if for every position $(i,j)\in\mathbb{N} \times \mathbb{N}$ with $j>1$, the number of cells weakly above $(i,j)$ in column $j$ is weakly less than the number of cells weakly above $(i,j-1)$ in column $j-1$.
  \label{lem:diagram}
\end{lemma}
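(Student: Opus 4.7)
The lemma characterizes Kohnert diagrams via a column-dominance condition. Writing $c_j(i)$ for the number of cells in column $j$ weakly above row $i$, the stated condition is $c_j(i) \leq c_{j-1}(i)$ for all $j \geq 2$ and $i \geq 1$. I would prove the two implications separately.

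For the forward direction, I induct on the number of Kohnert moves applied to the initial key diagram $D_a$. The base case is immediate: the cells of column $j$ in $D_a$ lie in rows $\{i : a_i \geq j\} \subseteq \{i : a_i \geq j-1\}$, which forces $c_j(i) \leq c_{j-1}(i)$. For the inductive step, a Kohnert move that slides the rightmost cell $(r, j)$ of row $r$ down to the first empty $(r', j)$ below (with rows $r'+1, \ldots, r-1$ all filled in column $j$ by definition of the move) decreases $c_j(i)$ by exactly $1$ for $r' < i \leq r$ and leaves all other column-count values unchanged. The inequality $c_j(i) \leq c_{j-1}(i)$ is preserved trivially since its left side only decreases; for $c_{j+1}(i) \leq c_j(i)$ to survive on the affected range, the strict form $c_{j+1}(i) \leq c_j(i) - 1$ had to hold before the move. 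At $i = r$ this follows from the rightmost-cell condition $(r, j+1) \notin D$, giving $c_{j+1}(r) = c_{j+1}(r+1) \leq c_j(r+1) = c_j(r) - 1$; and because rows $r'+1, \ldots, r-1$ are filled in column $j$, the strict gap propagates down to all $r' < i \leq r$ since $c_j$ grows by exactly $1$ per row descended through the range while $c_{j+1}$ grows by at most $1$.

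For the reverse direction, given $D$ satisfying the dominance condition, I would induct on the quantity $\phi(D) := \sum_{(i,j) \in D} i$, which takes only finitely many values on the finite support of $D$ and is strictly increased by reverse Kohnert moves. The base case of maximal $\phi$ forces $D$ to be left-justified (hence a key diagram) by the contrapositive of the inductive step. For the inductive step, assuming $D$ is not already a key diagram, I aim to exhibit a reverse Kohnert move to a diagram $D'$ still satisfying the condition, with $\phi(D') > \phi(D)$; the inductive hypothesis and the reversibility of Kohnert moves then give $D \in \KM(a')$ for the appropriate $a'$.

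The main obstacle is locating a valid reverse Kohnert move preserving the condition. A dual calculation to the forward analysis shows such a reverse move $(r^*, j) \to (r, j)$ preserves the condition precisely when $c_j(i) < c_{j-1}(i)$ strictly for all $r^* < i \leq r$, together with the standard reverse-move constraints: rows $r^*+1, \ldots, r-1$ filled in column $j$, $(r, j)$ empty, and row $r$ empty strictly right of column $j$. Since $D$ fails left-justification, there must be some column $j \geq 2$ and row $i$ with $c_j(i) < c_{j-1}(i)$ strict. My plan is to select $(r^*, j)$ extremally, taking $(r^*, j)$ as the highest cell in the column $j$ witnessing strict dominance and $r$ as the smallest row strictly above $r^*$ satisfying the blocking conditions, so that the strict gap automatically propagates on the chosen range via $c_j(r^* + 1) = c_j(i) < c_{j-1}(i) \leq c_{j-1}(r^* + 1)$. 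Verifying that a valid target row $r$ always exists, meeting both the column emptiness and the "no cells right of column $j$ in row $r$" constraints, is the delicate combinatorial step and will likely require splitting into cases based on which column witnesses the strict dominance or choosing an extremal such column.
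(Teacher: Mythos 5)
Your forward direction is correct and is in substance the paper's own argument: the paper phrases the preservation of the inequality between columns $j$ and $j+1$ as a proof by contradiction, but the content---that the rightmost-cell condition and the filled cells between the start and landing rows force the strict gap $c_{j+1}(i)\le c_j(i)-1$ on exactly the affected rows---is the same as your direct verification. The genuine gap is in the reverse direction, and you have flagged it yourself: the whole argument hinges on exhibiting a reverse Kohnert move that preserves the dominance condition, and your proposal defers precisely that step. Moreover, the extremal selection you float does not work as stated. Take $D=\{(1,1),(1,3),(2,1),(2,2)\}$, which satisfies the condition, is not a key diagram, and lies in $\KM(1,3)$. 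Column $2$ witnesses strict dominance (at row $1$), its highest cell is $(2,2)$, and the first admissible target above it is the empty row $3$; but $c_2(3)=0=c_1(3)$, so by your own preservation criterion the move $(2,2)\to(3,2)$ is forbidden (indeed it produces $c_2(3)=1>c_1(3)=0$). The correct move is $(1,3)\to(2,3)$, which your rule does not select; so the "delicate combinatorial step" is not a routine case split but the actual mathematical content of this direction. A secondary issue: $\phi(D)=\sum_{(i,j)\in D}i$ is not bounded above on diagrams satisfying the condition (rows may sit arbitrarily high, and a move may create a new top row, as $(2,2)\to(3,2)$ would), so "induct on maximal $\phi$" needs a separate argument that the process terminates.

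The paper resolves the deferred step with a different selection rule, which is the idea you are missing. Let $i$ be the \emph{highest} row that is not left-justified and let $C=(i,j)$ be the rightmost cell of row $i$ having no cell immediately to its left. Every row above $i$ is left-justified, so such a row meets column $j$ only if it meets column $j-1$; if no row above $i$ ended exactly at column $j-1$, the cell counts strictly above $(i,j)$ and $(i,j-1)$ would coincide, and since $(i,j)$ is occupied while $(i,j-1)$ is empty the dominance condition would fail at $(i,j)$. Hence some row above $i$ ends at column $j-1$; taking $\ell$ to be the lowest such row gives a landing position $(\ell,j)$ reachable by reverse Kohnert moves (the cell never lands left of another cell because all intervening rows are left-justified), and the strict inequality $c_j(k)<c_{j-1}(k)$ holds for all $i<k\le\ell$, so the condition survives. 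Termination is then immediate: the non-left-justified cells of row $i$ are expelled upward one at a time, rows below $i$ are never touched, and there are finitely many nonempty rows. If you want to keep your framework, replacing "highest cell of a witnessing column" with this top-row, rightmost-detached-cell choice closes the gap and also caps the highest occupied row, fixing the termination of your induction.
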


\begin{proof}
  The property is clear for key diagrams since cells are left-justified. We claim this property is preserved under Kohnert moves. Suppose $D$ is a Kohnert diagram having this property and that $D'$ is obtained from $D$ by a Kohnert move on a cell $C$ in position $(i,j)$ that moves to position $(i',j)$, with $i'<i$. 
  Suppose for a contradiction that the property does not hold for $D'$. Then in $D'$, the property must be violated at a position $(k,j+1)$ with $i' < k \leq i$, since the number of cells weakly above changed (decreasing by one) only for the positions $(k,j)$ with $i' < k \leq i$. This means for some $i' < k \leq i$, the number of cells weakly above $(k,j+1)$ in $D$ is equal to the number of cells weakly above $(k,j)$ in $D$. Since $D$ has a cell in $(k,j)$ for all $i' < k \leq i$ but no cell in $(i,j+1)$, this means that in $D$, in row $i$ or lower, for some $i'<k\le i$ the number of cells weakly above $(k,j+1)$ is strictly less than the number of cells weakly above $(k,j)$. Therefore in $D$, in row $i+1$ or higher, the number of cells weakly above $(k,j+1)$ is strictly greater than the number of cells weakly above $(k,j)$. In $D$, let $C$ be the lowest cell in column $j+1$ that is in row $i+1$ or higher. Then the property is violated in the position occupied by $C$, contradicting our choice of $D$.

  Conversely, if $D$ satisfies the property, then we claim that reverse Kohnert moves change $D$ into a key diagram. Indeed, assume $D$ is not a key diagram, and let $i$ be the highest nonempty row of $D$ that is not left-justified. Let $C$ be the rightmost cell in row $i$, say in column $j$, with no cell immediately to its left. We claim there must be a nonempty row above $i$ that has a cell in column $j-1$ but no cell in column $j$. If not, then the number of cells above strictly $(i,j)$ equals the number of cells strictly above $(i,j-1)$. However, since $(i,j)$ is occupied but $(i,j-1)$ is empty, the property fails in position $(i,j)$, contradicting our choice of $D$. By choice of $i$, all rows above $i$ are left-justified. Let $\ell>i$ be the lowest row above $i$ that ends in column $j-1$. Let $D'$ be the diagram obtained by moving $C$ to row $\ell$. Then $D'$ results from reverse Kohnert moves applied to the cell $C$ since $C$ will jump over any rows between $\ell$ and $i$ that have a cell in column $j$ or later; in particular, $C$ will never land to the left of any cell. Repeating the process eventually clears row $i$ of any cells that are not left-justified, so the process may repeat until a key diagram is reached.
\end{proof}

While Lemma~\ref{lem:diagram} is powerful, it is also limited in that it does not determine for which weak compositions $a$ a given Kohnert diagram can arise. For example, the diagram in Figure~\ref{fig:KD-a} is a Kohnert diagram by Lemma~\ref{lem:diagram}. While its weight is $(1,1,1)$ and the corresponding monomial of that weight does appear in $\key_{(0,2,1)}$, this is not a Kohnert diagram for $(0,2,1)$. However, it is a Kohnert diagram for $(0,0,2,1)$.

\subsection{Kohnert tableaux}
\label{sec:key-kohnert}

The simplicity of Kohnert's rule for key polynomials and the ease with which one can make computations with it are appealing. However, for our purposes we need to keep track of where each cell in a Kohnert diagram came from in the key diagram, and when Kohnert's algorithm gives multiple possibilities for this, we need to fix a canonical choice. Moreover, we require a means to determine readily if a given diagram can arise for a given weak composition. We make this model more tableaux-like by adding entries to the cells to track the Kohnert moves.

\begin{definition}
  Given a weak composition $a$ of length $n$, a \emph{Kohnert tableau of content $a$} is a diagram filled with entries $1^{a_1}, 2^{a_2}, \ldots, n^{a_n}$, one per cell, satisfying the following conditions:
  \begin{enumerate}[label=(\roman*)]
  \item there is exactly one $i$ in each column from $1$ through $a_i$;
  \item each entry in row $i$ is at least $i$;
  \item the cells with entry $i$ weakly descend from left to right;
  \item if $i<j$ appear in a column with $i$ above $j$, then there is an $i$ in the column immediately to the right of and strictly above $j$.
  \end{enumerate}
 We call an occurrence of (iv) an \emph{inversion} in the column of this $i$ and $j$, and say that $i$ and $j$ are inverted in their column. Denote the set of Kohnert tableaux of content $a$ by $\KT(a)$.
  \label{def:kohnert}
\end{definition}

Unlike the usual Young tableaux conditions, there is neither a decreasing nor an increasing condition for row or column entries, as can be seen in the Kohnert diagram in Figure~\ref{fig:labelling_algorithm}; however conditions (iii) and (iv) together do imply that \emph{row-adjacent} entries must weakly increase from left to right. 

The Kohnert tableaux of content $(0,3,2)$ are given in Figure~\ref{fig:kohnert_tableaux}. Compare this with Figure~\ref{fig:kohnert_diagrams}. Note that for any $a$, we may place $i$'s in row $i$ of $D_a$ to obtain a Kohnert tableau for $a$ with weight $a$. We call this the \emph{Yamanouchi Kohnert tableau of content $a$}.

\begin{figure}[ht]
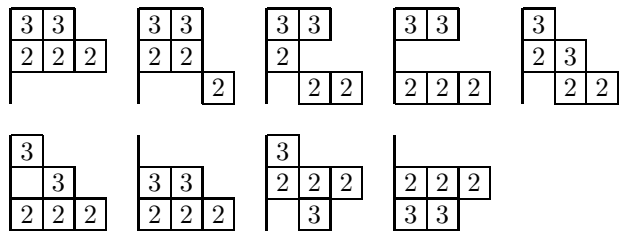

  \begin{center}
    \begin{displaymath}
      \begin{array}{c@{\hskip\cellsize}c@{\hskip\cellsize}c@{\hskip\cellsize}c@{\hskip\cellsize}c}
        \vline\tableau{  3 & 3 \\ 2 & 2 & 2 \\  } &
        \vline\tableau{  3 & 3 \\ 2 & 2  \\ & & 2 } &
        \vline\tableau{  3 & 3 \\ 2 \\ & 2 & 2  } &
        \vline\tableau{  3 & 3 \\ \\ 2 & 2 & 2  } &
        \vline\tableau{  3 \\ 2 & 3 \\ & 2 & 2 } \\ \\
        \vline\tableau{  3 \\ & 3 \\ 2 & 2 & 2 } &
        \vline\tableau{  \\ 3 & 3 \\ 2 & 2 & 2 } &
        \vline\tableau{  3 \\ 2 & 2 & 2 \\ & 3 } &
        \vline\tableau{   \\ 2 & 2 & 2 \\ 3 & 3 }        
      \end{array}
    \end{displaymath}
    \caption{\label{fig:kohnert_tableaux}The set $\KT(0,3,2)$ of Kohnert tableaux of content $(0,3,2)$.}
  \end{center}
\end{figure}

\begin{lemma}
  For $T \in \KT(a)$, the diagram of $T$ is a Kohnert diagram for $a$.
  \label{lem:KT2KM}
\end{lemma}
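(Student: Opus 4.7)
I would proceed by induction on the total ``drop'' of $T$, defined as $\sum_{(r,c)\in T}(\lambda_{(r,c)}-r)$ where $\lambda_{(r,c)}$ is the label of the cell at position $(r,c)$. When the drop is zero, $T$ must be the Yamanouchi Kohnert tableau of content $a$, so its diagram equals $D_a \in \KM(a)$ trivially.

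For the inductive step, I will identify a specific cell $C=(r,c)$ of $T$ with label $\lambda > r$ and lift $C$ upward in its column to a new position $(\ell,c)$, producing a labeled diagram $T'$ of strictly smaller drop. I will then show that (a) $T'$ is itself a Kohnert tableau of content $a$, and (b) the diagram of $T$ is obtained from the diagram of $T'$ by a sequence of Kohnert moves within column $c$. Combined with the induction hypothesis that the diagram of $T'$ lies in $\KM(a)$, these two facts complete the inductive step. A natural candidate for $C$ is the rightmost cell with label strictly exceeding its row, selected within the highest such row of $T$. The maximality of the row of $C$ forces every cell strictly above to lie at its Yamanouchi position, and in particular implies that $(\lambda,c)$ is empty in $T$---otherwise column $c$ would contain two cells with the same label $\lambda$, contradicting axiom~(i). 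This shows that $C$ may be lifted at least up to row $\lambda$ without violating axiom~(ii).

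The principal obstacle is verifying that the chosen lift preserves Kohnert tableau axioms~(iii) and~(iv). For axiom~(iii), the $\lambda$-cells must still weakly descend from left to right after the lift; this may require refining the choice of $C$ (for instance, taking the leftmost $\lambda$-cell in its row if several $\lambda$-cells share the same row) or lifting multiple cells in tandem. Axiom~(iv) demands a careful inversion analysis: no new forbidden inversion may be introduced by placing the lifted cell into its new row, and each existing inversion witness in the neighboring column must still satisfy the ``strictly above'' requirement after the lift. Here axiom~(iv) of $T$ itself supplies the combinatorial information needed to complete the verification, and a parallel analysis must confirm that at each intermediate state of the descending Kohnert moves the cell being moved remains the rightmost of its row; the required case analysis is the main technical effort of the proof.
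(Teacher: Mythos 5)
Your overall strategy is the same as the paper's: peel off one reverse Kohnert move at a time (your induction on total drop is just a bookkeeping device for the paper's ``iterate until you reach the Yamanouchi tableau''). However, as written the proof has a genuine gap, and the one concrete choice you commit to is wrong. Taking $C$ to be the \emph{rightmost} cell with label exceeding its row, in the highest such row, breaks condition (iii): for $a=(0,2)$ the tableau with both $2$'s in row $1$ has rightmost bad cell in column $2$, and lifting it to row $2$ leaves the column-$1$ two strictly \emph{below} the column-$2$ two, violating weak descent of equal labels from left to right. You flag this possibility (``may require refining the choice of $C$'') but never resolve it, and you likewise defer the verification of (iv), the validity of the reverse Kohnert move, and the behaviour of the intermediate stops in your multi-step descent. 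Since these are exactly the points where the lemma could fail, the proposal is a plan rather than a proof.

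The paper's resolution is worth internalizing because it makes all of your deferred obstacles evaporate simultaneously. Take $C$ to be the \emph{first} cell, reading rows top to bottom and each row left to right, whose label exceeds its row index, and move it only to the first available empty position above it in its column (a single reverse Kohnert move, so there is no ``intermediate state'' issue). With this choice every cell strictly above $C$, and every cell to the left of $C$ in its row, has label equal to its row index. Conditions (i)--(iii) then force each such Yamanouchi cell to have no empty position to its left, so all rows above $C$ are left-justified and $C$ necessarily lands with no cell to its right --- this is precisely what legitimizes the reverse Kohnert move. The same observation shows every cell with $C$'s label $\lambda$ in an earlier column already sits in row $\lambda$, so (iii) is preserved; and since $(\lambda,c)$ must be empty by (i), the landing row never exceeds $\lambda$, preserving (ii). Your instinct that (iv) needs an inversion analysis is fair (the paper is terse here), but with this choice the only inversions affected are ones already present in $T$, whose witnesses in the adjacent column sit in Yamanouchi rows and hence remain strictly above the lifted cell.
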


\begin{proof}
  Fix $T \in \KT(a)$ and let $D$ be the diagram obtained by deleting all labels of $T$. We claim one can perform reverse Kohnert moves on $D$ to obtain the key diagram of $a$. Reading the cells of $T$ left to right along rows, starting at the top row, find the first cell, say $C$, whose label is greater than its row number. Move $C$ to the first available space above it, jumping over other cells as needed. To show this reverse move is valid, we need to show there is no cell to the right of the position that $C$ lands in. It is immediate from (ii) and (iii) that any cell whose label agrees with its row number has no empty space to its left, and by assumption all cells above $C$ have label agreeing with their row number, so there is no empty space to the left of any cell that $C$ could land in. Since this procedure moves the top left cell having a given label greater than its row number, (iii) is preserved. The remaining $\KT$ conditions are clearly preserved, thus the result is in $\KT(a)$. Iterating this procedure, one eventually obtains the Yamanouchi Kohnert tableau, and each move is a valid reverse Kohnert move. Hence $D \in \KM(a)$.
\end{proof}

To establish the converse of Lemma~\ref{lem:KT2KM}, we require a canonical labeling of a Kohnert diagram.

\begin{definition}
  Given $D\in \KM(a)$, define the \emph{Kohnert labeling of $D$ with respect to $a$}, denoted by $L_a(D)$, according to the following procedure. Assuming all columns right of column $j$ have been labeled, assign labels $\{i \mid a_i \geq j\}$ to cells of column $j$ from bottom to top by choosing at each cell the smallest label $i$ such that the $i$ in column $j+1$, if it exists, is weakly lower.
\label{def:labelling_algorithm}
\end{definition}

For example, let $a=(0,4,0,2,4,1)$. Then for any $D \in \KM(a)$, $L_a$ will assign the labels $\{2,5\}$ to the fourth (last) column, $\{2,5\}$ to the third, $\{2,4,5\}$ to the second, and finally $\{2,4,5,6\}$ to the first column. Figure~\ref{fig:labelling_algorithm} shows an element of $\KM(a)$ and the labeling assigned to it.

\begin{figure}[ht]
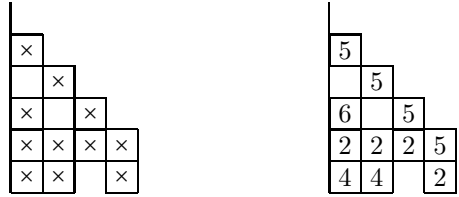

  \begin{center}
    \begin{displaymath}
      \begin{array}{c@{\hskip 6\cellsize}c}
        \vline\tableau{ \\ \times \\ & \times \\ \times & & \times \\ \times & \times & \times & \times \\ \times & \times & & \times } &
        \vline\tableau{ \\ 5 \\ & 5 \\ 6 & & 5 \\ 2 & 2 & 2 & 5 \\ 4 & 4 & & 2 } 
      \end{array}
    \end{displaymath}
    \caption{\label{fig:labelling_algorithm} An element $D \in \KM(0,4,0,2,4,1)$ (left) and its labeling $L_{(0,4,0,2,4,1)}(D)$ (right).}
  \end{center}
\end{figure}

\begin{lemma}
  The labeling map $L_a$ is well-defined for any Kohnert diagram $D$ where the number of cells in column $k$ is equal to the number of indices $i$ for which $a_i\geq k$. Moreover, in this case, $L_a(D)$ satisfies Kohnert tableau conditions (i), (iii), and (iv). 
  \label{lem:well-label}
\end{lemma}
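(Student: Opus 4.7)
My plan is to handle well-definedness by a counting argument using the Kohnert diagram property from Lemma~\ref{lem:diagram}, and then read off the axioms (i), (iii), and (iv) directly from the greedy rule.

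Set $S_j = \{i : a_i \geq j\}$, so by hypothesis column $j$ contains exactly $|S_j|$ cells, and $S_{j+1} \subseteq S_j$. I proceed by descending induction on $j$, so when labeling column $j$ the row $\rho(i)$ of $i$ in column $j+1$ is already defined for each $i \in S_{j+1}$ (and set $\rho(i) = -\infty$ if $i \in S_j \setminus S_{j+1}$). Suppose the cells of column $j$ lie in rows $q_1 < q_2 < \cdots < q_m$ with $m = |S_j|$, and we are about to place a label at row $q_t$. I must show some unused $i \in S_j$ satisfies $\rho(i) \leq q_t$. If not, all $m - t + 1$ unused labels lie in $S_{j+1}$ with row in column $j+1$ strictly above $q_t$, so column $j+1$ contains at least $m - t + 1$ cells in rows $\geq q_t + 1$. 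But Lemma~\ref{lem:diagram} applied at position $(q_t + 1,\, j+1)$ bounds this count by the number of cells of column $j$ with row $\geq q_t + 1$, namely $m - t$. This contradiction shows the greedy step always succeeds, so $L_a$ is well-defined.

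Condition (i) is immediate: the labels placed in column $j$ are precisely the elements of $S_j$, and $i \in S_j$ iff $1 \leq j \leq a_i$. For (iii), whenever $i$ appears in both column $j$ and column $j+1$, the rule enforces $\rho_{\mathrm{col}\, j+1}(i) \leq \rho_{\mathrm{col}\, j}(i)$, which is exactly the required weak descent from left to right. For (iv), suppose $i < j$ lie in the same column $c$ with $i$ above $j$, and say $j$ is placed at row $r$. Since labels are assigned from bottom to top, $i$ was still unused when $j$ was placed, hence $i$ was a candidate; the smallest-label rule chose $j$ over $i$ only because $i$ was invalid at $r$. The only way $i$ can be invalid is that $i \in S_{c+1}$ with $\rho(i) > r$, which gives an $i$ in column $c+1$ strictly above $j$'s row, exactly as required.

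The one substantive step is the well-definedness argument; everything else is a direct unwinding of the definition. The main obstacle I anticipate is making sure the count of ``blocked'' labels in column $j$ is correctly translated into a statement about the cell profile of column $j+1$ so that Lemma~\ref{lem:diagram} can be applied cleanly, as above.
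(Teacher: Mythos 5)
Your proposal is correct and follows essentially the same route as the paper: well-definedness via the same counting contradiction against Lemma~\ref{lem:diagram} (your count at position $(q_t+1,\,j+1)$ is just a cleaner bookkeeping of the paper's ``$k$ blocked labels versus $k-1$ cells above $C$'' argument), and conditions (i), (iii), (iv) read off the greedy smallest-label rule exactly as in the paper.
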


\begin{proof}
  Suppose for a contradiction that at some point when filling column $j$, say at cell $C$ with $k$ available labels remaining, there is no remaining label $i$ such that the $i$ in column $j+1$ is weakly lower. Then each of these $k$ labels also appears in column $j+1$ and in a strictly higher row than cell $C$. However, based on the order of the labeling algorithm, there are $k-1$ cells strictly above $C$ in column $j$, contradicting Lemma~\ref{lem:diagram} at the position of the lowest cell in column $j+1$ that is strictly above the row of $C$.

  Conditions (i) and (iii) hold by construction. For condition (iv), suppose $j$ is the smallest label available that can be placed in cell $C$ in order to satisfy (iii). If a label $i<j$ is still available as well, then $i$ and $j$ will be inverted in this column, so we need to show there cell labeled $i$ that is above $C$ in the column to the right of $C$. If not, then we could have assigned the label $i$ to cell $C$, and since $i<j$, it would have taken precedence, so (iv) is satisfied.   
\end{proof}

To prove that the labeling map is a bijection, we first note that when Kohnert tableau condition (ii) fails for the labeling of some Kohnert diagram, then it must also fail for the labeling of some \emph{key} diagram. The construction is illustrated in Figure~\ref{fig:fail-key}.

\begin{lemma}
  For $D$ a Kohnert diagram for which $L_a(D)$ is well-defined, if $L_a(D)$ fails Kohnert tableau condition (ii), then there exists a key diagram $D_b$ such that $D \in \KM(b)$ and $L_a(D_b)$ fails condition (ii).
  \label{lem:fail-key}
\end{lemma}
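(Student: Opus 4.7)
My plan is to build $D_b$ by iteratively applying reverse Kohnert moves to $D$ using the algorithm from the proof of Lemma~\ref{lem:diagram}: at each stage, lift the rightmost cell of the highest non-left-justified row to the lowest row above ending one column earlier. This terminates in a key diagram $D_b$ with $D \in \KM(b)$ automatically, so the content reduces to the claim that each individual reverse Kohnert move preserves failure of condition (ii) in $L_a$.

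For a single reverse move $D \to D'$ lifting a cell of column $j$ from row $i$ to row $\ell > i$: the labeling of columns strictly right of $j$ is unchanged, so any (ii)-violation there persists. Moreover, the cells of column $j$ strictly below row $i$ keep their labels in $D'$, because their greedy bottom-to-top assignments depend only on those cells and on the unchanged column $j+1$. Hence (ii)-violations in column $j$ below row $i$ also persist. The substantive case is a violation in column $j$ at or above row $i$, or in a column left of $j$ where the cascade from column $j$ matters.

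To handle this, I would establish a monotonicity statement for the greedy labeling in column $j$: replacing one cell by a cell at a strictly higher row, with the label pool $\{m : a_m \geq j\}$ and the column $j+1$ constraints held fixed, yields an assignment with at least as many (ii)-violations. Concretely, I would match the two greedy assignments bijectively on labels and track label migration: any label $m$ sitting at row $r < m$ in $D$ corresponds in $D'$ either to the same label $m$ on a cell whose row is still less than $m$, or to a different label relocated to a position that now itself violates (ii). The leftward propagation through columns $< j$ then follows by applying the same reasoning recursively, since the changes in column $j$'s labeling feed into the constraint for column $j-1$ in the same monotone way.

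The main obstacle is formalizing this column-$j$ monotonicity, because the greedy rule can reshuffle labels in subtle ways that depend on the column $j+1$ constraints. I expect a case analysis on where $\ell$ sits among the other cells of column $j$, combined with a careful accounting of which labels migrate to which cells, to resolve this. The worked instance in Figure~\ref{fig:fail-key} should anchor the bookkeeping and make the necessary inequalities transparent.
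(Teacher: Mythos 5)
Your overall framework---reverse Kohnert moves lifting $D$ up to a key diagram $D_b$, with $D \in \KM(b)$ automatic, reducing everything to showing that a single lift preserves the failure of condition (ii)---matches the paper's. But the heart of the lemma is exactly the step you leave open, and the route you sketch for it has two problems. First, a small but telling one: condition (ii) requires each entry in row $r$ to be at least $r$, so a violation is a label $m$ in row $r$ with $m < r$; your ``label $m$ sitting at row $r<m$'' is the \emph{non}-violating configuration, so the monotonicity statement you intend to prove is stated backwards. Second, and more seriously, the recursive leftward propagation does not reduce to ``the same reasoning'': after column $j$ is relabeled, the perturbation seen by column $j-1$ is not ``one cell moved to a strictly higher row with the label pool and right-hand constraints held fixed'' but an essentially arbitrary reshuffling of which labels occupy which rows of column $j$. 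So the monotonicity lemma, even once correctly stated and proved for the base case, does not apply inductively to columns $j-1,\dots,1$, and you would need a substantially stronger invariant to push a violation all the way left.

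The paper avoids all of this by proving something stronger and cleaner. Choosing the cell $C$ to be lifted as the first cell in reading order (top row first, left to right within rows) outside column 1 having no cell immediately to its left, every cell above $C$ in its column has a left neighbor, and a short argument using conditions (iii) and (iv) shows that the labels of every column other than the column of $C$ are \emph{literally unchanged} by the lift---no migration bookkeeping is needed. This is combined with the observation that any failure of (ii) is always witnessed in column 1: by (i) and (iii) the copy of a label $i$ in column 1 sits weakly above every other cell labeled $i$, so if some $i$ lies strictly above row $i$ then so does its column-1 copy. Since $C$ is never in column 1, the column-1 violation persists through every lift. I would recommend replacing your monotonicity-plus-propagation plan with these two ingredients: (a) reduce the failure of (ii) to column 1, and (b) show the chosen lifts never disturb the labels outside the lifted cell's column, hence never disturb column 1.
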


\begin{proof}
  Construct a key diagram from $D$ as follows. Read the cells of $D$ left to right along rows, starting at the top row. Find the first cell not in column 1, say $C$, that has no cell immediately to its left. Move $C$ up to the first available space above it, jumping over any cells in its way as needed. Since all cells above $C$ form a key diagram, $C$ does not land to the left of any cell, hence this is a valid reverse Kohnert move. Iterate this procedure until all cells not in the first column have a cell immediately to their left, i.e., until a key diagram $D_b$ is attained. This is guaranteed to happen by Lemma~\ref{lem:diagram}, since the diagram is a Kohnert diagram to begin and each move is a reverse Kohnert move, whereby this property is preserved. By construction, $D_b$ is the key diagram greatest in dominance order from which $D$ can be attained from a series of Kohnert moves. 

  When moving a cell $C$ up during this process, we claim that the labels for all cells not in the column of $C$ remain fixed. For columns right of $C$, this is obvious from the definition. For the column immediately left of that of $C$, we analyze the column of $C$. In the column of $C$ and above it, any entries smaller than $i$, the label of $C$, will remain in their cells since $C$ was taken prematurely by condition (iii), and so condition (iii) cannot force smaller entries higher in the column to the left. Further, any entries larger than $i$ maintain their relative order since they will necessarily be placed higher than before $C$ moved up. By condition (iv), any entry above $C$ and larger than $i$ in its column must also exist and sit above the same entry in the column to the left. However, since $C$ has no cell to its left, but every cell above $C$ does by the choice of $C$, the entries larger than $i$ already sit higher in the column to the left, so condition (iii) again cannot force them higher up in that column. Since labels for a column depend only on the column immediately to its right, the claim now follows. In particular, the labels of the first column remain unchanged, so $L_a(D_b)$ must also violate (ii), establishing the result.
\end{proof}

\begin{figure}[ht]
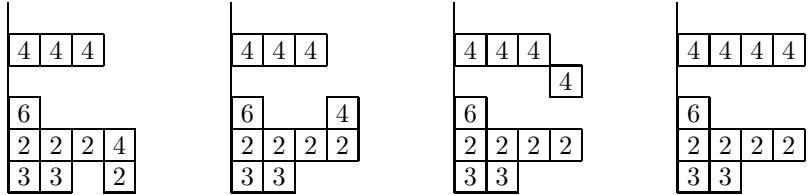

  \begin{center}
    \begin{displaymath}
      \begin{array}{c@{\hskip 3\cellsize}c@{\hskip 3\cellsize}c@{\hskip 3\cellsize}c}
        \vline\tableau{ \\ 4 & 4 & 4 & \\ & & & \\ 6 & & & \\ 2 & 2 & 2 & 4 \\ 3 & 3 & & 2 } &
        \vline\tableau{ \\ 4 & 4 & 4 & \\ & & & \\ 6 & & & 4 \\ 2 & 2 & 2 & 2 \\ 3 & 3 & & } & 
        \vline\tableau{ \\ 4 & 4 & 4 & \\ & & & 4 \\ 6 & & & \\ 2 & 2 & 2 & 2 \\ 3 & 3 & & } & 
        \vline\tableau{ \\ 4 & 4 & 4 & 4 \\ & & & \\ 6 & & & \\ 2 & 2 & 2 & 2 \\ 3 & 3 & & } 
      \end{array}
    \end{displaymath}
    \caption{\label{fig:fail-key} An illustration of raising a labeled Kohnert diagram (left) whose labelings with respect to $(0,4,2,4,0,1)$ violate Kohnert tableau condition (ii) to a labeled key diagram (right) whose labeling also fails condition (ii).}
  \end{center}
\end{figure}

\begin{theorem}
  The labeling map $L_a$ is a weight-preserving bijection between $\KM(a)$ and $\KT(a)$. In particular, we have
  \begin{equation}
    \key_a = \sum_{T \in \KT(a)} x^{\wt(T)},
  \end{equation}
  where $\wt(T)$ is the weak composition whose $i$th part is the number of cells in row $i$ of $T$.  
  \label{thm:kohnert}
\end{theorem}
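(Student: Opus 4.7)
The plan is to exhibit an explicit two-sided inverse to $L_a$, namely the map $\pi\colon \KT(a) \to \KM(a)$ that forgets labels; by Lemma~\ref{lem:KT2KM} this is well-defined into $\KM(a)$, and $\pi \circ L_a = \mathrm{id}_{\KM(a)}$ is immediate since $L_a$ adjoins labels without altering the underlying diagram. It therefore suffices to verify (I) that $L_a$ sends every $D \in \KM(a)$ into $\KT(a)$, and (II) that $L_a(\pi(T)) = T$ for every $T \in \KT(a)$. Weight-preservation is then automatic, and combining with Kohnert's formula $\key_a = \sum_{D \in \KM(a)} x^{\wt(D)}$ will deliver the claimed identity.

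For (I), since Kohnert moves slide cells only vertically, every $D \in \KM(a)$ has the same column counts as $D_a$, so Lemma~\ref{lem:well-label} applies and gives both that $L_a(D)$ is well-defined and that it satisfies $\KT$ conditions (i), (iii), and (iv). For the remaining condition~(ii) I would argue by contradiction using Lemma~\ref{lem:fail-key}: if $L_a(D)$ violated (ii), that lemma would produce a key diagram $D_b$ with $D \in \KM(b)$ such that $L_a(D_b)$ also violates (ii). I would then derive the contradiction by analyzing the greedy labeling on a key diagram compatible with $\KM(a)$: the greedy rule for $L_a$ forces any hypothetical label $i$ placed at row $r > i$ in column $k$ of $L_a(D_b)$ to have every smaller available label $i'$ blocked by an occurrence in column $k+1$ at a row strictly above $r$, and combining this with condition~(iii) (already known) and the fact that $D_b$ arises from the raising construction applied to some $D \in \KM(a) \cap \KM(b)$ should produce an inconsistency with the positions that the raising algorithm actually produces.

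For (II), I would proceed by induction on columns from right to left. Under the inductive hypothesis that $L_a$ has correctly reproduced $T$ on all columns strictly to the right of $j$, condition~(i) gives that the label multiset in column $j$ of $T$ coincides with the set $\{i : a_i \geq j\}$ available to the greedy. Traversing column $j$ from bottom to top, at each cell I would show that $T$'s label is precisely the greedy choice: any smaller available label $i'$ not yet placed must lie above the current cell in $T$'s column $j$, whereupon condition~(iv) forces $i'$ to occur in column $j+1$ strictly above the current row, invalidating $i'$ under the greedy selection rule; meanwhile condition~(iii) ensures compatibility with the position of the current label in columns already processed. These force the greedy to reproduce $T$ cell-by-cell.

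The main obstacle will be the (ii) argument in part~(I). Although Lemma~\ref{lem:fail-key} reduces the question to key diagrams, the contradiction requires exploiting the specific structure of the $D_b$ produced by the raising construction: a key diagram with matching column counts alone does not guarantee that the greedy $L_a$ labeling respects condition~(ii), and one must use that $\KM(a) \cap \KM(b)$ is nonempty to constrain the distribution of small labels sufficiently.
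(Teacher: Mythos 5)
Your overall architecture is the same as the paper's: parts (i), (iii), (iv) come from Lemma~\ref{lem:well-label}; condition (ii) is reduced to key diagrams via Lemma~\ref{lem:fail-key}; and your part (II) induction (any smaller available label $i'$ sitting above the current cell would, by condition (iv), have to appear in column $j+1$ strictly above, which disqualifies it for the greedy rule) is exactly the paper's uniqueness argument showing that stripping labels inverts $L_a$. That part is sound.

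The genuine gap is precisely where you flag ``the main obstacle'': you never actually produce the contradiction for condition (ii) on the key diagram $D_b$, and the mechanism you gesture at --- ``an inconsistency with the positions that the raising algorithm actually produces'' --- is not the right shape of argument. The paper's contradiction is not positional but a counting inequality on row lengths. Concretely: let $i$ be the lowest label violating (ii) and let $k>0$ be the number of cells strictly above row $i$ labeled $i$ (so $a_i \geq k > b_i$). One then shows that every row $j\le i$ of $L_a(D_b)$ of length at least $k$ must carry a label $\ell < i$ in its $k$th column (via a descending chain forced by conditions (iii) and (iv) along adjacent columns), and each such $\ell$ satisfies $a_\ell \ge k$; since these labels are distinct, this yields
\[ \# \{ j\leq i \mid a_j \geq k \} \;>\; \# \{ j\leq i \mid b_j \geq k \}. \]
But the hypothesis $D_b \in \KM(a)$ forces the reverse inequality $\# \{ j\leq i \mid a_j \geq k \} \leq \# \{ j\leq i \mid b_j \geq k \}$, because under Kohnert moves rows either keep their relative order, or shorter rows jump over longer ones below them, or longer rows break apart --- none of which can increase the number of rows of length $\ge k$ among the top $i$ rows. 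Without both halves of this comparison (the tableau-side claim that long rows of $b$ are indexed by small labels with long rows of $a$, and the diagram-side dominance constraint coming from $D_b \in \KM(a)$), the proof of condition (ii) is incomplete, and this is the crux of the theorem rather than a routine verification.
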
	

\begin{proof}
  Suppose that $D\in\KM(a)$. By Lemma~\ref{lem:well-label}, $L_a$ is well-defined on $D$ since the number of cells per column is constant on $\KM(a)$. We claim that no filling of $D$ other than $L_a(D)$ can give an element of $\KT(a)$. Suppose at some point when filling columns from right to left, bottom to top, that $i$ is the smallest possible label you could place in a cell $C$ so that (iii) is satisfied with respect to the already-filled columns to the right. Suppose now you choose to place $j>i$ instead. Then to satisfy (i) you must place $i$ somewhere higher in the column, creating an inversion. By (iii), since you could have placed $i$ in $C$, there is no cell labeled $i$ above $C$ in the column to the right. But this means (iv) is not satisfied. Therefore, by Lemma~\ref{lem:KT2KM}, removing the labels gives an inverse map, so it remains only to show that $L_a(D)$ is a Kohnert tableau.

  By Lemma~\ref{lem:well-label}, we may assume $L_a(D)$ satisfies Kohnert tableau conditions (i), (iii), and (iv). Assume, for contradiction, that $L_a(D)$ fails condition (ii). By Lemma~\ref{lem:fail-key}, we may assume $D$ is a key diagram, i.e. $D = D_b$. Let $i$ be the label of the lowest cell of $L_a(D_b)$ that violates (ii), and let $k>0$ be the number of cells of $L_a(D_b)$ strictly above row $i$ that have label $i$. We claim that for any $j\leq i$ with $b_j>0$, if the leftmost cell in row $j$ of $L_a(D_b)$ has label greater than $i$, then $b_j<k$. By (iii) and the fact $D_b$ is a key diagram, labels in $L_a(D_b)$ increase along rows, and so all labels in row $j$ are greater than $i$. Suppose for a contradiction that $b_j\ge k$. Then by (iv) and our supposition, there is a cell $C$ labeled $i$ in column $k+1$ and row $r\le i$. Since $D_b$ is a key diagram, there is a cell $C'$ (with label $\ell$) immediately left of $C$, and by (iii) and (i) we have $\ell<i$. For (iv) to be satisfied, there must be another cell $E$ below $C$ in column $k+1$ labeled $\ell$. But then there is a cell $E'$ with label strictly smaller than $\ell$ immediately left of $E$. Iterating, we find the lowest occupied row of $L_a(D_b)$ above row $j$ with at least $k$ cells has a cell in column $k$ with label $x<i$, but there is no cell labeled $x$ above row $j$ in column $k+1$, meaning the label of the cell in row $j$, column $k$ violates (iv). This proves the claim. In particular, since $i$ is the lowest label in violation of (ii), we have $a_i \geq k > b_i$.

  Consider a nonempty row $b_j$ of $L_a(D_b)$. By the previous claim, if the leftmost cell of this row has a label greater than $i$, then $b_j<k$. Therefore, all rows from $\{b_1, \ldots b_{i-1}\}$ of $L_a(D_b)$ having length at least $k$ have a label smaller than $i$ in their leftmost cell. Furthermore, by the same argument as above, the $k$th label in such a row is also strictly smaller than $i$. Consider the set of all rows from $\{b_1, \ldots b_{i-1}\}$ of $L_a(D_b)$ having length at least $k$. Index each row in this set by the label $\ell<i$ of its cell in column $k$ (these labels are all distinct by (i)). Then for each of these labels $\ell$, there are at least $k$ cells in $L_a(D_b)$ labeled $\ell$, hence $a_\ell\ge k$. Hence the number of rows $\{a_1, \ldots a_{i-1}\}$ having length at least $k$ is at least as large as the number of rows of $\{b_1, \ldots b_{i-1}\}$ having length at least $k$. In particular, combining this with the fact that $a_i \geq k > b_i$, we have
  \[ \# \{ j\leq i \mid a_j \geq k \} > \# \{ j< i \mid a_j \geq k \}  \geq \# \{ j< i \mid b_j \geq k \} = \# \{ j\leq i \mid b_j \geq k \}. \]

  Finally, we claim that if $D_b\in \KM(a)$, then for all $i$ and all $k$, we have
  \[ \# \{ j\leq i \mid a_j \geq k \} \leq \# \{ j\leq i \mid b_j \geq k \} . \]
  Once proved, the claim provides the desired contradiction, showing that $D_b$, and so all $D$, must satisfy condition (ii). To see the final claim, consider the key diagrams that can arise from applying a series of Kohnert moves to $\KM(a)$. Either rows maintain their relative order by sliding down into empty rows, or shorter rows jump over longer rows below them, or longer rows break with their ends dropping to a lower row. So if the condition of the claim fails for some $i$ and $k$, one cannot obtain the rows $b_1, \ldots  , b_i$ of $b$ by a series of Kohnert moves on $D_a$.
\end{proof}

\begin{figure}[ht]
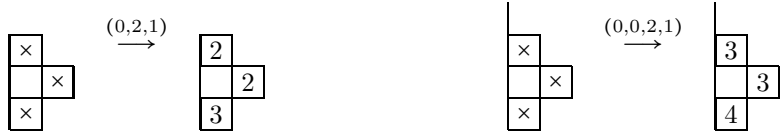

  \begin{displaymath}
    \begin{array}{ccc}
      \raisebox{-\cellsize}{$\vline\tableau{ \times \\ & \times \\ \times }$} \hspace{\cellsize}
      \raisebox{-.5\cellsize}{$\stackrel{(0,2,1)}{\longrightarrow}$} \hspace{\cellsize}
      \raisebox{-\cellsize}{$\vline\tableau{ 2 \\ & 2 \\ 3 }$} &
      \hspace{6\cellsize} &
      \vline\tableau{ \\ \times \\ & \times \\ \times } \hspace{\cellsize}
      \raisebox{-.5\cellsize}{$\stackrel{(0,0,2,1)}{\longrightarrow}$} \hspace{\cellsize}
      \vline\tableau{ \\ 3 \\ & 3 \\ 4 }
    \end{array}
  \end{displaymath}
  \caption{\label{fig:KD-a}The labeling algorithm $L_a$ applied to a Kohnert diagram using $a=(0,2,1)$ on the left and $a=(0,0,2,1)$ on the right.}
\end{figure}

Note that condition (ii) and the labeling algorithm gives an efficient test to determine if a given Kohnert diagram arises for a given weak composition. For example, if we attempt to label the diagram in Figure~\ref{fig:KD-a} with $(0,2,1)$ as done on the left, then we are forced to place a $2$ above row $2$, in violation of Kohnert tableaux condition (ii), which is as it should be since this is not a Kohnert diagram for $(0,2,1)$. In contrast, if we attempt to label the diagram with $(0,0,2,1)$ as done on the right, then condition (ii) is satisfied, and indeed this is a Kohnert diagram for $(0,0,2,1)$.

\subsection{Fundamental slide expansion}
\label{sec:stable-fund}

In \cite{AS17}, the authors define \emph{fundamental slide polynomials} as a tool to give compacted expansions of Schubert polynomials as well as to understand better stability properties. Here, we make use of fundamental slide polynomials in a similar manner.

\begin{definition}[\cite{AS17}]
  For a weak composition $a$ of length $n$, define the \emph{fundamental slide polynomial} $\Fund_{a} = \Fund_{a}(x_1,\ldots,x_n)$ by
  \begin{equation}
    \Fund_{a} = \sum_{\substack{b \geq a \\ \flatten(b) \ \mathrm{refines} \ \flatten(a)}} x^b,
    \label{e:fundamental-shift}
  \end{equation}
  where the relation $b\ge a$ is dominance order, and $\flatten(a)$ is the strong composition obtained by removing all $0$ terms of $a$.
  \label{def:fundamental-shift}
\end{definition}

For example, we have
\begin{displaymath}
  \Fund_{032} = x^{032} + x^{122} + x^{212} + x^{302} + x^{311} + x^{320}.
\end{displaymath}

In order to expand key polynomials into the fundamental slide basis, we define a condition on Kohnert tableaux called \emph{quasi-Yamanouchi}, analogous to the condition on semi-standard Young tableaux and on pipe dreams defined in \cite{AS17}.

\begin{definition}
  A Kohnert tableau is \emph{quasi-Yamanouchi} if for each nonempty row $i$, one of the following holds:
  \begin{enumerate}
  \item there is a cell in row $i$ with entry equal to $i$, or
  \item there is a cell in row $i+1$ that lies weakly right of a cell in row $i$.
  \end{enumerate}
  Denote the set of quasi-Yamanouchi Kohnert tableaux of content $a$ by $\QKT(a)$.
  \label{def:quasi-Yam}
\end{definition}

For example, Figure~\ref{fig:qYam_kohnert_tableaux} gives the quasi-Yamanouchi Kohnert tableaux of content $(0,3,2)$. The Yamanouchi Kohnert tableau satisfies (1) on every row, and thus is quasi-Yamanouchi.

\begin{figure}[ht]
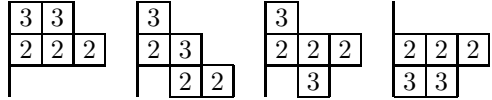

  \begin{center}
    \begin{displaymath}
      \begin{array}{c@{\hskip\cellsize}c@{\hskip\cellsize}c@{\hskip\cellsize}c}
        \vline\tableau{ 3 & 3 \\ 2 & 2 & 2 \\ } &
        \vline\tableau{ 3 \\ 2 & 3 \\ & 2 & 2 } &
        \vline\tableau{ 3 \\ 2 & 2 & 2 \\ & 3 } &
        \vline\tableau{ \\ 2 & 2 & 2 \\ 3 & 3 }         
      \end{array}
    \end{displaymath}
    \caption{\label{fig:qYam_kohnert_tableaux}The set $\QKT(0,3,2)$ of quasi-Yamanouchi Kohnert tableaux of content $(0, 3, 2)$.}
  \end{center}
\end{figure}

\begin{definition}
  For $T \in \KT(a)$, the \emph{destandardization} of $T$, denoted by $\destand(T)$, is the Kohnert tableau constructed from $T$ as follows. For each row, say $i$, if every cell in row $i$ lies strictly right of every cell in row $i+1$ and the leftmost cell of row $i$ has label larger than $i$, then move every cell in row $i$ up to row $i+1$. Repeat until no such row exists.
  \label{def:destand}
\end{definition}

The term \emph{destandardization} is used here in the same context as in \cite{AS17}. The analogy with Young tableaux is that the destandardization is the unique semi-standard Young tableaux that standardizes to a given standard Young tableau and whose weight as a strong composition corresponds to the descent composition of the standard Young tableau in the sense of Gessel \cite{Ges84}. For an example, see Figure~\ref{fig:destand}.

\begin{figure}[ht]
  \begin{center}
    \begin{displaymath}
      \vline\tableau{ \\ 8 \\ & 8 & 8 \\ 6 \\ 4 & 4 \\ 5 &  & 4 \\ 3 & 5 \\ & & & 4}
      \hspace{1em} \raisebox{-4\cellsize}{$\stackrel{\destand}{\longrightarrow}$} \hspace{1em}
      \vline\tableau{8 & 8 & 8 \\ \\ 6 \\ \\ 4 & 4 \\ 5 & & 4 \\ 3 & 5 & & 4 \\}
    \end{displaymath}
    \caption{\label{fig:destand}An example of the destandardization map $\destand: \KT(a) \rightarrow \QKT(a)$.}
  \end{center}
\end{figure}

\begin{lemma}
  The destandardization map is well-defined and satisfies the following:
  \begin{enumerate}
  \item for $T \in \KT(a)$, $\destand(T) \in \QKT(a)$;
  \item for $T \in \KT(a)$, $\destand(T)=T$ if and only if $T \in \QKT(a)$;
  \item $\destand:\KT(a) \rightarrow \QKT(a)$ is surjective;
  \item $\destand:\KT(a) \rightarrow \QKT(a)$ is injective if and only if $a_i=0$ implies $a_j=0$ for all $j>i$.
  \end{enumerate}
  \label{lem:destand}
\end{lemma}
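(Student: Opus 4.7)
A destandardization step moves all cells of a row $i$ up to row $i+1$. The key observation for staying inside $\KT(a)$ is a label-propagation lemma: if the leftmost cell of row $i$ has label $\ell > i$, then no cell of row $i$ can carry label $i$, because conditions (iii) and (ii) would then force a cell labeled $i$ into column $1$ of row $i$, which would be the leftmost. This upgrades (ii) to ``every label in row $i$ is $\geq i+1$,'' which is exactly the shift required. Conditions (i) and (iii) are immediate because the cells move as a block; for (iv), the ``strictly right'' half of the trigger makes column $c+1$ empty in the old row $i+1$, so any (iv)-witness in column $c+1$ that used to sit at row $>i$ in fact sits at row $\geq i+2$ and remains strictly above the moved cell's new row $i+1$. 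A case analysis on column-pairs then confirms (iv) in the new tableau. Termination follows from the strictly decreasing potential $\sum_C(\mathrm{label}(C)-\mathrm{row}(C))$. Finally, if some nonempty row $i$ of $\destand(T)$ violated both clauses of Definition~\ref{def:quasi-Yam}, the propagation lemma would force the leftmost label of row $i$ above $i$ and the ``strictly right'' condition would hold; the trigger would still fire, contradicting termination. Hence $\destand(T) \in \QKT(a)$.

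\textbf{Parts (2), (3), and the ``if'' half of (4).} For (2), if $T \in \QKT(a)$ and row $i$ satisfies clause (1) of quasi-Yamanouchi, the contrapositive of the propagation lemma puts the leftmost label of row $i$ at exactly $i$; if row $i$ satisfies clause (2) instead, the ``strictly right'' half of the trigger fails at row $i$. Either way no step fires, so $\destand(T) = T$. Part (3) is then immediate: each $S \in \QKT(a) \subseteq \KT(a)$ is its own destandardization and so lies in the image. For the ``if'' direction of (4), assume $a = (a_1, \ldots, a_k, 0, 0, \ldots)$ with $a_1, \ldots, a_k > 0$. I claim every $T \in \KT(a)$ is automatically quasi-Yamanouchi: by (i), column $1$ contains exactly one cell of each label $1, \ldots, k$ (no higher label appears anywhere), and the cell labeled $j$ in column $1$ is the leftmost cell labeled $j$, hence sits at row $\leq j$ by (iii) and (ii). Since the $k$ cells of column $1$ occupy distinct rows, induction on $j$ pins the cell labeled $j$ to row exactly $j$, so every nonempty row $i$ contains a cell labeled $i$ in column $1$, confirming clause (1) of quasi-Yamanouchi. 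Hence $\destand$ is the identity and trivially injective.

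\textbf{``Only if'' half of (4), and main obstacle.} Contrapositively, suppose the condition fails; then scanning $a$ from left to right one finds the smallest index $j^*$ with $a_{j^*-1} = 0$ and $a_{j^*} > 0$. Let $Y$ be the Yamanouchi Kohnert tableau of content $a$, and construct $Y'$ by moving the $a_{j^*}$ cells of row $j^*$ down into the empty row $j^*-1$. Conditions (i)--(iii) are immediate for $Y'$, and (iv) holds vacuously because labels still strictly increase from bottom to top in every column: labels $m < j^*$ sit at rows $\leq j^*-2$, label $j^*$ sits at row $j^*-1$, and labels $m > j^*$ sit at rows $\geq j^*+1$. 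Then $Y' \neq Y$, and destandardizing $Y'$ triggers exactly at row $j^*-1$ and lifts the row straight back up to the empty row $j^*$, so $\destand(Y') = Y = \destand(Y)$, breaking injectivity. The subtlest step in the whole proof is the preservation of condition (iv) under a single destandardization step: the ``strictly right'' half of the trigger is essential both for ruling out a column-$(c+1)$ cell in the old row $i+1$ and for promoting the existing (iv)-witness ``at row $>i$'' to the strictly stronger ``at row $\geq i+2$'' that the new row $i+1$ demands.
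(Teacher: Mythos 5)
Your proof is correct and follows essentially the same route as the paper's: verify that each destandardization step preserves the four Kohnert tableau conditions (your label-propagation observation is exactly what the paper's terse ``not to a row higher than their label'' relies on), identify the trigger as the negation of the quasi-Yamanouchi condition to obtain (1)--(3), and for (4) pin the column-1 cells to their rows for sufficiency while exhibiting a second preimage of the Yamanouchi tableau for necessity. The only (immaterial) difference is that to break injectivity you drop the entire row $j^*$ into the empty row below it, whereas the paper drops only its rightmost cell; both yield a valid Kohnert tableau destandardizing back to the Yamanouchi one.
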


\begin{proof}
  To see that the destandardization map maintains the Kohnert tableau conditions, note that the labels within each column are maintained, proving (i). Cells are moved upward, but not to a row higher than their label, maintaining (ii). Since no cell is moved from weakly below to strictly above any other, and no cell moves upward if there is a cell to its right in the row above, conditions (iii) and (iv) are maintained. 

  Destandardization terminates if and only if the quasi-Yamanouchi condition is met, proving (1) and (2), and (3) follows from (2). For (4), the condition given is clearly sufficient since all Kohnert tableaux will have a cell in row $r$, column 1 labeled with $r$ for all $1\le r \le \ell$, where $\ell$ is greatest such that $a_\ell \neq 0$. It is also clearly necessary since if $a_i\neq 0$ but $a_{i-1}= 0$ for some $i$, then one can obtain a second Kohnert tableau that destandardizes to the Yamanouchi one by moving the rightmost cell of row $i$ down to row $i-1$. 
\end{proof}

\begin{theorem}
  For a weak composition $a$, we have
  \begin{equation}
    \key_a = \sum_{T \in \QKT(a)} \Fund_{\wt(T)},
    \label{e:key-slide}
  \end{equation}
  where $\wt(T)$ is the weak composition whose $i$th part is the number of cells in row $i$ of $T$.
  \label{thm:key-slide}
\end{theorem}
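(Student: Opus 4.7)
The plan is to combine Theorem~\ref{thm:kohnert} with the destandardization map of Definition~\ref{def:destand}. By Theorem~\ref{thm:kohnert}, $\key_a = \sum_{T \in \KT(a)} x^{\wt(T)}$, and by Lemma~\ref{lem:destand}(3) the map $\destand \colon \KT(a) \to \QKT(a)$ is a surjection, so partitioning into fibers reduces the theorem to showing that for each $S \in \QKT(a)$ with weight $c = \wt(S)$,
\begin{equation*}
  \sum_{T \in \destand^{-1}(S)} x^{\wt(T)} \;=\; \Fund_c.
\end{equation*}

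To establish this fiber identity, I would construct an explicit weight-preserving bijection between $\destand^{-1}(S)$ and the index set of $\Fund_c$, namely the weak compositions $b$ with $b \geq c$ in dominance order and $\flatten(b)$ refining $\flatten(c)$. In one direction, since each destandardization step moves an entire row $i$ up to row $i+1$, partial sums of $\wt(T)$ weakly exceed those of $c$, giving $\wt(T) \geq c$; and since each step merges two consecutive rows, the nonzero parts of $\wt(T)$ that collapse into a single nonzero part of $c$ form a consecutive block, so $\flatten(\wt(T))$ refines $\flatten(c)$.

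For the inverse, suppose $b$ satisfies the two conditions and let the nonzero positions of $c$ be $i_1 < \cdots < i_k$. The refinement and dominance hypotheses together force the nonzero parts of $b$ to partition into consecutive blocks, with the $j$th block confined to positions $i_{j-1}+1, \ldots, i_j$ (setting $i_0 = 0$) and summing to $c_{i_j}$. I would then build $T$ by splitting each nonempty row $i_j$ of $S$ horizontally according to this block: the leftmost $b_{i_j}$ cells remain in row $i_j$, the next $b_{i_j-1}$ cells drop to row $i_j - 1$, and so on down to the rightmost $b_{i_{j-1}+1}$ cells sent to row $i_{j-1}+1$, with each cell retaining its label from $S$.

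The hardest step will be verifying that this $T$ is a valid Kohnert tableau of content $a$ that destandardizes back to $S$. Conditions (i) and (ii) of Definition~\ref{def:kohnert} are immediate since labels are preserved and every label in row $i_j$ of $S$ is at least $i_j$; condition (iii) follows because within each band the leftmost cells retain the highest row and the rightmost cells descend to the lowest, so the weakly-descending shape of equal-label cells is preserved. Condition (iv) is the delicate piece: any inversion created by splitting must be witnessed by a cell in the column immediately to the right, and I would handle this through a case analysis separating inversions within a single band from those straddling bands, appealing in each case to the fact that $S$ itself satisfies (iv). Finally, $\destand(T) = S$ follows because each split produces a configuration where the lower row's cells are strictly right of the upper row's, forcing destandardization to reassemble row $i_j$, while the quasi-Yamanouchi hypothesis on $S$ guarantees the process halts precisely at $S$.
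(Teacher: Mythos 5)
Your overall strategy is the same as the paper's: reduce via Theorem~\ref{thm:kohnert} to showing that each fiber $\destand^{-1}(S)$ has generating function $\Fund_{\wt(S)}$, prove the forward containment by observing that destandardization moves whole rows up, and reconstruct $T$ by splitting each nonempty row of $S$ with the leftmost cells staying highest and the rightmost cells dropping lowest. However, there is a genuine error in your description of the inverse: you claim that the dominance and refinement conditions force the $j$th block of $b$ to be confined to positions $i_{j-1}+1,\ldots,i_j$, where $i_1<\cdots<i_k$ are the nonzero positions of $c=\wt(S)$. This is false, and the resulting construction misses terms of $\Fund_c$. Take $a=(0,3,2)$ and $S$ the Yamanouchi Kohnert tableau, so $c=(0,3,2)$, $i_1=2$, $i_2=3$. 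The weak composition $b=(3,1,1)$ satisfies $b\geq c$ and $\flatten(b)=(3,1,1)$ refines $(3,2)$, and indeed $x^{311}$ appears in $\Fund_{032}$; but its second block $(1,1)$ occupies positions $2$ and $3$, not just position $3$. Under your scheme the two cells of row $3$ of $S$ may only be redistributed among rows $3,\ldots,3$, so you can never produce the fiber element of weight $(3,1,1)$ (nor the one of weight $(3,2,0)$), and your map is not surjective onto the support of $\Fund_c$.

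The correct statement, which is what the paper's reconstruction uses, is that the blocks of $b$ are consecutive and the $j$th block must \emph{end} by position $i_j$ (this is what dominance forces, since $b_1+\cdots+b_{i_j}\geq c_1+\cdots+c_{i_j}$), but it may begin at a position $\leq i_{j-1}$, occupying positions vacated by earlier blocks. Accordingly, when reconstructing $T$, the cells of row $i_j$ of $S$ are dropped, from right to left, into the rows indexed by the positions of the $j$th block of $b$, and these rows may lie strictly below $i_{j-1}$; in the example, one cell of row $3$ drops to row $2$ while all three cells of row $2$ drop to row $1$. With this correction the remainder of your outline (verification of the Kohnert tableau conditions and of $\destand(T)=S$) is on the right track and is in fact more detailed than the paper, which asserts existence and uniqueness with little verification.
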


\begin{proof}
  If $\destand(T)=U$, then $\wt(T) \geq \wt(U)$ and $\flatten(\wt(T))$ refines $\flatten(\wt(U))$ since $U$ is obtained by moving \emph{all} cells in row $i-1$ of $T$ to row $i$ in $U$. Conversely, we claim that given $U \in \QKT(a)$, for every weak composition $b$ of length $n$ such that $b \geq \wt(U)$ and $\flatten(b)$ refines $\flatten(\wt(U))$, there is a unique $T \in \KT(a)$ with $\wt(T) = b$ such that $\destand(T) = U$. From the claim, we have
  \begin{displaymath}
    \sum_{T \in \destand^{-1}(U)} x^{\wt(T)} = \Fund_{\wt(U)}.
  \end{displaymath}
  That is to say, if we partition the Kohnert tableaux into equivalence classes based on the quasi-Yamanouchi Kohnert tableau to which they standardize, then each class has generating polynomial equal to a single fundamental slide polynomial. By Lemma~\ref{lem:destand}, each equivalence class is uniquely represented by a quasi-Yamanouchi Kohnert tableau, and so the theorem follows from the claim.
  
  To reconstruct $T$ from $b$ and $U$, for $j = 1,\ldots,n$, if $\wt(U)_{j} = b_{i_{j-1} + 1} + \cdots + b_{i_{j}}$, then, from right to left, move the first $b_{i_{j-1} + 1}$ cells down to row $i_{j-1} + 1$, the next $b_{i_{j-1} + 2}$ cells down to row $i_{j-1} + 2$, and so on. Existence is proved, and uniqueness follows from the lack of choice at every step.
\end{proof}

For example, from Figure~\ref{fig:qYam_kohnert_tableaux} we can compute
\begin{displaymath}
 \key_{032} = \Fund_{032} + \Fund_{221} + \Fund_{131} + \Fund_{230}.
\end{displaymath}

%
\section{Quasi-key polynomials}
%
\label{sec:quasi-key}

\subsection{Quasi-Kohnert tableaux}
\label{sec:quasi-mono}

Building off of our new combinatorial model for key polynomials, we impose additional conditions to Kohnert tableaux that will provide a combinatorial model for a new family of polynomials.

\begin{definition}
  Given a weak composition $a$ of length $n$, a \emph{quasi-Kohnert tableau of content $a$} is a Kohnert tableau of content $a$ satisfying the following additional conditions:
  \begin{enumerate}[label=(\roman*)]
  \item the leftmost column is strictly increasing from bottom to top, and 
  \item if $i<j$ are in consecutive columns with $i$ left of and weakly above $j$, then $a_i \geq a_j$.
  \end{enumerate}
  Denote the set of quasi-Kohnert tableaux of content $a$ by $\qKT(a)$.
  \label{def:quasi-Kohnert}
\end{definition}

For example, only the first eight Kohnert tableaux in Figure~\ref{fig:kohnert_tableaux} satisfy the quasi-Kohnert conditions for the content $(0,3,2)$.

\begin{figure}[ht]
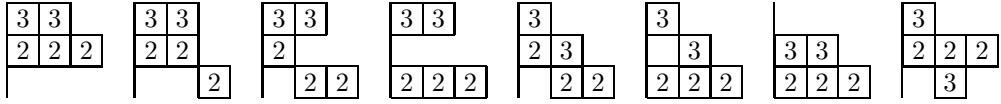

  \begin{center}
    \begin{displaymath}
      \begin{array}{c@{\hskip\cellsize}c@{\hskip\cellsize}c@{\hskip\cellsize}c@{\hskip\cellsize}c@{\hskip\cellsize}c@{\hskip\cellsize}c@{\hskip\cellsize}c}
        \vline\tableau{  3 & 3 \\ 2 & 2 & 2 \\  } &
        \vline\tableau{  3 & 3 \\ 2 & 2  \\ & & 2 } &
        \vline\tableau{  3 & 3 \\ 2 \\ & 2 & 2  } &
        \vline\tableau{  3 & 3 \\ \\ 2 & 2 & 2  } &
        \vline\tableau{  3 \\ 2 & 3 \\ & 2 & 2 } &
        \vline\tableau{  3 \\ & 3 \\ 2 & 2 & 2 } &
        \vline\tableau{  \\ 3 & 3 \\ 2 & 2 & 2 } &
        \vline\tableau{  3 \\ 2 & 2 & 2 \\ & 3 }         
      \end{array}
    \end{displaymath}
    \caption{\label{fig:quasi_kohnert_tableaux}The set $\qKT(0,3,2)$ of quasi-Kohnert tableaux of content $(0,3,2)$.}
  \end{center}
\end{figure}

\begin{definition}
  The \emph{quasi-key polynomial indexed by $a$}, denoted by $\qkey_a$, is given by
  \begin{equation}
    \qkey_a = \sum_{T \in \qKT(a)} x^{\wt(T)}.
  \end{equation}
  \label{def:quasi-key}
\end{definition}

For example, from Figure~\ref{fig:quasi_kohnert_tableaux}, we compute
\begin{displaymath}
  \qkey_{032} = x^{032} + x^{122} + x^{212} + x^{302} + x^{221} + x^{311} + x^{320} + x^{131}.
\end{displaymath}

\begin{theorem}
  The quasi-key polynomials $\{ \qkey_{a} \mid  a=(a_1,\ldots,a_n) \}$ form a $\mathbb{Z}$-basis for $\mathbb{Z}[x_1,\ldots,x_n]$.
  \label{thm:qkey-basis}
\end{theorem}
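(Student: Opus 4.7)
The plan is to mimic the standard upper-unitriangularity argument used for the key basis: show that the transition matrix between $\{\qkey_a\}$ and the monomial basis $\{x^a\}$ is upper unitriangular with respect to a suitable refinement of reverse dominance order. Since the monomials $\{x^a\}$ already form a $\mathbb{Z}$-basis of $\mathbb{Z}[x_1,\ldots,x_n]$, this will force $\{\qkey_a\}$ to be a $\mathbb{Z}$-basis as well.

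The first step is to verify that the Yamanouchi Kohnert tableau of content $a$, obtained by placing $i$'s in row $i$ of $D_a$, lies in $\qKT(a)$. Condition (i) of Definition~\ref{def:quasi-Kohnert} holds because the leftmost column reads the indices $i$ with $a_i>0$ from bottom to top, which is strictly increasing. Condition (ii) holds vacuously: in this tableau, a label $i$ sits in row $i$ and a label $j$ in row $j$, so if $i<j$ then $i$ is strictly below $j$, never weakly above it. This Yamanouchi tableau has weight exactly $a$, so $x^a$ appears in $\qkey_a$ with coefficient at least one.

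The second step is to control all other monomials in $\qkey_a$. Since $\qKT(a)\subseteq \KT(a)$, Theorem~\ref{thm:kohnert} identifies each $T\in\qKT(a)$ with a Kohnert diagram $D\in\KM(a)$ of the same weight. As already noted in the excerpt, Kohnert moves only push cells downward, so every $D\in\KM(a)$ satisfies $\wt(D)\geq a$ in dominance order, with equality if and only if $D=D_a$, which corresponds precisely to the Yamanouchi tableau. Thus in the monomial expansion
\[
\qkey_a \;=\; x^a \;+\; \sum_{b>a} c_{a,b}\, x^b,
\]
where the sum is over weak compositions $b$ of length $n$ strictly dominating $a$ and each $c_{a,b}\in\mathbb{Z}_{\geq 0}$.

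For the final step, fix any total order on weak compositions of length $n$ that refines reverse dominance order, for example reverse lexicographic order (the excerpt observes dominance is a suborder of lex). The display above shows that the transition matrix from $\{\qkey_a\}$ to $\{x^a\}$, with rows and columns ordered by this total order, is upper unitriangular with integer entries. Such a matrix is invertible over $\mathbb{Z}$, so $\{\qkey_a \mid a=(a_1,\ldots,a_n)\}$ is a $\mathbb{Z}$-basis for $\mathbb{Z}[x_1,\ldots,x_n]$. The only point requiring care is the checking of the two quasi-Kohnert conditions for the Yamanouchi tableau, and even that is immediate once written out; beyond this, the argument is a direct consequence of the already-established Kohnert-tableau machinery.
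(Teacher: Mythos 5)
Your proposal is correct and follows essentially the same route as the paper: the paper's proof likewise observes that $x^a$ is the unique minimal term of $\qkey_a$ in dominance order, deduces upper unitriangularity against the monomial basis, and concludes. You simply fill in the details the paper leaves implicit, namely that the Yamanouchi Kohnert tableau satisfies the two quasi-Kohnert conditions and that all other terms strictly dominate $a$.
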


\begin{proof}
  Each quasi-key polynomial contains a unique minimal term, in dominance order. Thus 
  \begin{displaymath}
    \qkey_a = x^a + \sum_{b>a} c_{a,b} x^b,
  \end{displaymath}
  for some $c_{a,b} \in \mathbb{N}$. In particular, the quasi-key polynomials $\{\qkey_a\}$ are upper uni-triangular with respect to the monomials $\{x^a\}$. Since the latter clearly form a $\mathbb{Z}$-basis, so do the former.
\end{proof}

As with key polynomials, we obtain more compact expansions for quasi-key polynomials using the fundamental slide basis. We do so using the same quasi-Yamanouchi condition.

\begin{theorem}
  For a weak composition $a$ of length $n$, we have
  \begin{equation}
    \qkey_a = \sum_{T \in \QqKT(a)} \Fund_{\wt(T)},
  \end{equation}
  where the sum is over quasi-Yamanouchi quasi-Kohnert tableaux of content $a$.
  \label{thm:quasi-key}
\end{theorem}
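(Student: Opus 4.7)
The plan is to mimic the proof of Theorem~\ref{thm:key-slide} by showing that the destandardization map of Definition~\ref{def:destand} restricts to a surjection $\destand : \qKT(a) \to \QqKT(a)$ whose fibers coincide with those of $\destand : \KT(a) \to \QKT(a)$. Once this is established, partitioning $\qKT(a)$ into destandardization fibers and invoking the identity $\sum_{T \in \destand^{-1}(U)} x^{\wt(T)} = \Fund_{\wt(U)}$ from the proof of Theorem~\ref{thm:key-slide} yields the claimed expansion.

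The first task is to verify that destandardization preserves the quasi-Kohnert conditions (i) and (ii). Since destandardization iterates single row moves, it suffices to check preservation under one move, say lifting row $k$ up to row $k+1$. For condition (i), cells in column $1$ keep their relative vertical order because at most one cell occupies each row of column $1$ and cells only move upward, so a strictly increasing sequence from bottom to top in column $1$ remains strictly increasing. For condition (ii), suppose labels $i<j$ appear in consecutive columns $c,c+1$ of the resulting tableau with the $i$-cell weakly above the $j$-cell. Case analysis on which of these two cells lies in the moved row $k$ reduces to the following: if neither cell moved, the configuration already existed in $T$; if both moved, they moved in lockstep and the configuration existed in $T$; if only the $j$-cell moved (from row $k$ to $k+1$), then the $i$-cell was strictly above the $j$-cell in $T$; and the remaining case where only the $i$-cell moved with the $j$-cell already in row $k+1$ is ruled out by the destandardization hypothesis that every cell of row $k$ lies strictly right of every cell of row $k+1$ (which would force $c>c+1$). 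In each surviving case the corresponding configuration held in $T$, so condition (ii) for $T$ supplies $a_i\geq a_j$.

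The second task is to verify the reverse direction: if $U\in\QqKT(a)$ and $b\geq \wt(U)$ with $\flatten(b)$ refining $\flatten(\wt(U))$, then the unique $T\in\KT(a)$ with $\wt(T)=b$ and $\destand(T)=U$ produced by the explicit construction in the proof of Theorem~\ref{thm:key-slide} lies in $\qKT(a)$. Since this construction splits each nonempty row $j$ of $U$ into rows $i_{j-1}+1,\ldots,i_j$ of $T$ by moving rightmost cells to the lowest new row, the leftmost cell of row $j$ of $U$ lands in the highest new row $i_j$, and the indices $i_j$ are strictly increasing in $j$. Hence the relative vertical order of labels in column $1$ is preserved, establishing (i). For (ii), a pair of labels $i<j$ in consecutive columns of $T$ with the $i$-cell weakly above the $j$-cell must originate from a pair in $U$ that is also weakly above: cells from strictly lower rows of $U$ end in strictly lower rows of $T$, and within a single row of $U$ the cell further left ends weakly higher. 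Thus condition (ii) transfers from $U$ to $T$.

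With both directions verified, the restricted destandardization map has fibers matching those of the unrestricted one, and summing $\sum_{U\in\QqKT(a)} \Fund_{\wt(U)}$ gives $\qkey_a$. The main obstacle is the case analysis in the forward direction confirming preservation of condition (ii); in particular the key subtlety is that the destandardization hypothesis precisely forbids the one configuration (only the $i$-cell moves, $j$-cell already immediately above it) in which a new weakly-above relation between consecutive-column labels could be spuriously created.
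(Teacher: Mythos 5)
Your proposal is correct and follows essentially the same route as the paper: the published proof of this theorem likewise reduces everything to showing that destandardization preserves the quasi-Kohnert conditions and then reuses the fiber decomposition $\sum_{T \in \destand^{-1}(U)} x^{\wt(T)} = \Fund_{\wt(U)}$ from the proof of Theorem~\ref{thm:key-slide}. You additionally spell out the converse direction---that every Kohnert tableau in the fiber of a quasi-Yamanouchi quasi-Kohnert tableau is itself quasi-Kohnert---which the paper leaves implicit, since there it follows from the single observation that destandardization never moves a cell from weakly below to strictly above another, so the configurations relevant to both quasi-Kohnert conditions are preserved exactly in both directions.
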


\begin{proof}
  It suffices to prove that destandardization maintains the quasi-Kohnert conditions. This is obvious for condition (i) since it maintains the column inversions. For condition (ii), since destandardization moves entire rows up but never moves a cell from weakly below to strictly above another, no new instances of $i<j$ with $i$ weakly above and strictly left of $j$ can arise.
\end{proof}

\begin{figure}[ht]
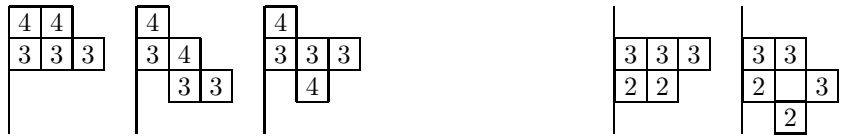

  \begin{center}
    \begin{displaymath}
      \begin{array}{c@{\hskip\cellsize}c@{\hskip\cellsize}c@{\hskip 8\cellsize}c@{\hskip\cellsize}c}
        \vline\tableau{ 4 & 4 \\ 3 & 3 & 3 \\ } &
        \vline\tableau{ 4 \\ 3 & 4 \\ & 3 & 3 } &
        \vline\tableau{ 4 \\ 3 & 3 & 3 \\ & 4 } &
        \vline\tableau{ \\ 3 & 3 & 3 \\ 2 & 2 } &
        \vline\tableau{ \\ 3 & 3  \\ 2 &  & 3 \\ & 2 } 
      \end{array}
    \end{displaymath}
    \caption{\label{fig:qYam_qkohnert}The set $\QqKT(0,0,3,2)$ of quasi-Yamanouchi quasi-Kohnert tableaux of content $(0, 0, 3, 2)$ (left three) and the set $\QqKT(0,2,3,0)$ (right two).}
  \end{center}
\end{figure}

For example, we compute the fundamental slide expansion for the quasi-key polynomials for $(0,0,3,2)$ and $(0,2,3,0)$ using the quasi-Yamanouchi Kohnert tableaux that satisfy the quasi-Kohnert conditions as depicted in Figure~\ref{fig:qYam_qkohnert}. In this case, we have
\begin{displaymath}
  \qkey_{0032} = \Fund_{0032} + \Fund_{0221} + \Fund_{0131}
  \hspace{3em} \mbox{and} \hspace{3em}
  \qkey_{0230} = \Fund_{0230} + \Fund_{1220}.
\end{displaymath}
Recalling Figure~\ref{fig:qYam_kohnert_stable}, observe that $\qkey_{0032} + \qkey_{0230} = \key_{0032}$.

\subsection{An intermediate basis}
\label{sec:quasi-fund}

One purpose in introducing quasi-key polynomials is to understand better the key polynomials. Toward that end, we give a nonnegative decomposition of a key polynomial into quasi-key polynomials.

\begin{definition}
  The \emph{thread decomposition} of $T\in \KM(a)$ partitions cells into \emph{threads} as follows. Beginning with the rightmost column, select the lowest available cell to begin the thread. After threading a cell in column $j+1$, thread the lowest available cell in column $j$ that is weakly above the threaded cell in column $j+1$. Continue the thread until all columns are threaded or no possible choice remains. Continue threading until all cells are part of some thread.
  \label{def:thread}
\end{definition}

For example, the thread decompositions for $\QKT(0,2,3,2)$ are shown in Figure~\ref{fig:thread}. In the interests of space, we only depict thread decompositions for the quasi-Yamanouchi Kohnert tableaux rather than all Kohnert diagrams. 

\begin{figure}[ht]
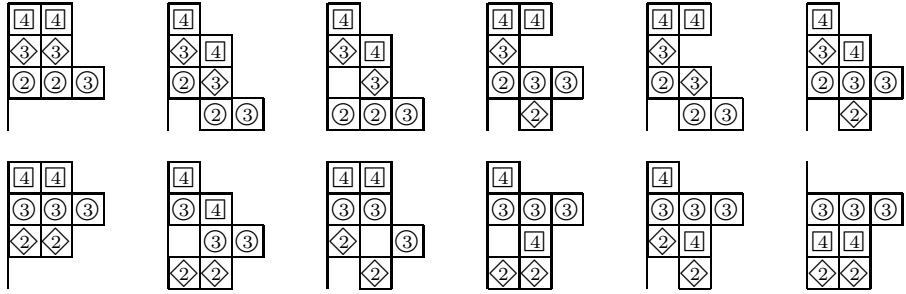

  \begin{displaymath}
    \begin{array}{c@{\hskip2\cellsize}c@{\hskip2\cellsize}c@{\hskip2\cellsize}c@{\hskip2\cellsize}c@{\hskip2\cellsize}c}
      \vline\tableau{\sqbox{4} & \sqbox{4} \\ \diabox{3} & \diabox{3} \\ \cirbox{2} & \cirbox{2} & \cirbox{3} \\ } &
      \vline\tableau{\sqbox{4} \\ \diabox{3} & \sqbox{4} \\ \cirbox{2} & \diabox{3} \\ & \cirbox{2} & \cirbox{3}} &
      \vline\tableau{\sqbox{4} \\ \diabox{3} & \sqbox{4} \\ & \diabox{3} \\ \cirbox{2} & \cirbox{2} & \cirbox{3}} &
      \vline\tableau{\sqbox{4} & \sqbox{4} \\ \diabox{3} \\ \cirbox{2} & \cirbox{3} & \cirbox{3} \\ & \diabox{2} } &
      \vline\tableau{\sqbox{4} & \sqbox{4} \\ \diabox{3} \\ \cirbox{2} & \diabox{3} \\ & \cirbox{2} & \cirbox{3} } &
      \vline\tableau{\sqbox{4} \\ \diabox{3} & \sqbox{4} \\ \cirbox{2} & \cirbox{3} & \cirbox{3} \\ & \diabox{2} } \\ \\
      \vline\tableau{\sqbox{4} & \sqbox{4} \\ \cirbox{3} & \cirbox{3} & \cirbox{3} \\ \diabox{2} & \diabox{2} \\ } &
      \vline\tableau{\sqbox{4} \\ \cirbox{3} & \sqbox{4} \\ & \cirbox{3} & \cirbox{3} \\ \diabox{2} & \diabox{2}} &
      \vline\tableau{\sqbox{4} & \sqbox{4} \\ \cirbox{3} & \cirbox{3} \\ \diabox{2} &   & \cirbox{3} \\ & \diabox{2}} &
      \vline\tableau{\sqbox{4} \\ \cirbox{3} & \cirbox{3} & \cirbox{3} \\ & \sqbox{4} \\ \diabox{2} & \diabox{2}} &
      \vline\tableau{\sqbox{4} \\ \cirbox{3} & \cirbox{3} & \cirbox{3} \\ \diabox{2} & \sqbox{4} \\ & \diabox{2}} &
      \vline\tableau{ \\ \cirbox{3} & \cirbox{3} & \cirbox{3} \\ \sqbox{4} & \sqbox{4} \\ \diabox{2} & \diabox{2}} 
    \end{array}
  \end{displaymath}
  \caption{\label{fig:thread}Thread decompositions for elements of $\QKT(0,2,3,2)$, where the cells in the same thread are decorated by the same symbol.}
\end{figure}

By Lemma~\ref{lem:diagram}, the first thread of the thread decomposition ends in the first column and removing the cells of the first thread from a Kohnert diagram results in another Kohnert diagram. Thus every thread ends in the first column. We can use the thread decomposition to construct a bijection from quasi-Yamanouchi Kohnert tableaux to quasi-Yamanouchi quasi-Kohnert tableaux as follows.

\begin{figure}[ht]
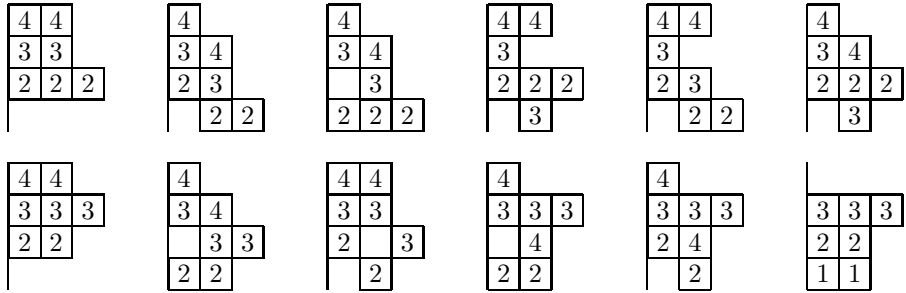

  \begin{displaymath}
    \begin{array}{c@{\hskip2\cellsize}c@{\hskip2\cellsize}c@{\hskip2\cellsize}c@{\hskip2\cellsize}c@{\hskip2\cellsize}c}
      \vline\tableau{4 & 4 \\ 3 & 3 \\ 2 & 2 & 2 \\ } &
      \vline\tableau{4 \\ 3 & 4 \\ 2 & 3 \\ & 2 & 2} &
      \vline\tableau{4 \\ 3 & 4 \\ & 3 \\ 2 & 2 & 2} &
      \vline\tableau{4 & 4 \\ 3 \\ 2 & 2 & 2 \\ & 3} &
      \vline\tableau{4 & 4 \\ 3 \\ 2 & 3 \\ & 2 & 2} &
      \vline\tableau{4 \\ 3 & 4 \\ 2 & 2 & 2 \\ & 3} \\ \\
      \vline\tableau{4 & 4 \\ 3 & 3 & 3 \\ 2 & 2 \\} &
      \vline\tableau{4 \\ 3 & 4 \\ & 3 & 3 \\ 2 & 2} &
      \vline\tableau{4 & 4 \\ 3 & 3 \\ 2 &   & 3 \\ & 2} &
      \vline\tableau{4 \\ 3 & 3 & 3 \\ & 4 \\ 2 & 2} &
      \vline\tableau{4 \\ 3 & 3 & 3 \\ 2 & 4 \\ & 2} &
      \vline\tableau{ \\ 3 & 3 & 3 \\ 2 & 2 \\ 1 & 1} 
    \end{array}
  \end{displaymath}
  \caption{\label{fig:quasi}The images of the elements of $\QKT(0,2,3,2)$ under the thread map $\theta$, which lie in  $\QqKT(0,3,2,2) \cup \QqKT(0,2,3,2) \cup \QqKT(2,2,3,0)$.}
\end{figure}

\begin{lemma}
  Define the \emph{thread map} $\theta$ on $\KT(a)$ by changing the values of all of the cells in the thread beginning in row $i$ to $i$. Then $\theta(T)$ is a quasi-Kohnert tableau.
\end{lemma}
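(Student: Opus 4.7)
The plan is to verify each condition of Definition~\ref{def:quasi-Kohnert} for $\theta(T)$. Let $a'_\ell$ denote the length of the thread ending at row $\ell$ in column~$1$, so that $a'$ is the content of $\theta(T)$. By Lemma~\ref{lem:diagram} every thread reaches column~$1$, so each thread spans a contiguous range $[1, a'_\ell]$ of columns with one cell per column, and its cells weakly descend from left to right terminating at row~$\ell$. These structural facts immediately yield Kohnert conditions (i)--(iii) (each label occupies one cell per column in its range; any cell of thread $\ell$ sits in row $\leq \ell$ since the thread moves weakly upward going leftward; cells of the same label weakly descend to the right) and quasi-Kohnert condition (i) (column-$1$ cells are labeled by their own rows, hence strictly increase bottom-to-top).

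The main work lies in Kohnert condition (iv). I would first observe that the thread decomposition induces a matching between cells of column $k+1$ and cells of column $k$ via same-thread pairing, and that this matching coincides with the greedy bottom-up matching (repeatedly pair the lowest unmatched cell of column $k+1$ with the lowest unmatched cell of column $k$ weakly above it); in particular, it is order-preserving on matched cells. Now suppose an inversion occurs in column $k$: labels $i < j$ at rows $r_1 > r_2$ respectively. I split into two cases.

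Case~1 (thread $j$ extends to column $k+1$, at some row $s_j \leq r_2$): order preservation forces thread $i$ also to continue from column $k+1$ at some $s_i > s_j$. The inductive claim is that the relation ``thread $i$ above thread $j$'' is preserved at every leftward step down to column~$1$. At each column, the thread built earlier takes the lowest available cell weakly above its previous row; in either sub-case (thread $j$ built first because it extends further right, or thread $i$ built first in the reverse sub-case) one checks that, because thread $i$ was above thread $j$ in the previous column, the greedy choice in the current column cannot place thread $i$ at or below thread $j$'s new row: any cell in the gap would contradict an earlier greedy step. Hence in column $1$, thread $i$'s row $i$ exceeds $j$, contradicting $i < j$, so this case produces no inversion.

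Case~2 (thread $j$ starts at column $k$, so $(r_2, k)$ is unmatched): a parallel-thread argument rules out thread $i$ also starting at column $k$, since otherwise thread $j$ (at the lower row $r_2$) would be built first and remain strictly below thread $i$ by the same order-preservation, forcing $j < i$. Hence thread $i$ continues from column $k+1$ at some row $s_i$ matched to $r_1$. If $s_i \leq r_2$, then the unmatched cell $r_2$ would be a strictly lower option than $r_1$ for the match of $s_i$, contradicting the greediness of the matching; thus $s_i > r_2$, as required.

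For quasi-Kohnert condition (ii), suppose labels $i < j$ appear in consecutive columns $k$ and $k+1$ with $i$ weakly above $j$. If $a'_j > a'_i$ then thread $j$ extends further right than thread $i$, is built first, and a greedy-matching analysis in the shared columns (identical in spirit to Case~1) forces thread $i$'s column-$1$ endpoint to exceed $j$, giving $i > j$ and contradicting $i < j$. The hardest step will be the order-preservation analysis of Case~1, since it requires tracking how the greedy matching interacts with the different possible build orders of the two continuing threads.
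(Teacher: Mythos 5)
Your verification of Kohnert conditions (i)--(iii) and quasi-Kohnert condition (i) is fine and agrees with the paper. The gap is in Case~1 of your argument for Kohnert condition (iv). The inductive claim there --- that ``thread $i$ above thread $j$'' propagates leftward from column $k+1$ all the way to column $1$ --- is false, because the greedy order-preservation only runs in one direction: a \emph{later} thread sitting above an \emph{earlier} one in some column must stay above in every column further left (otherwise the earlier thread would have taken the later thread's lower cell), but an \emph{earlier} thread sitting above a \emph{later} one can be overtaken as one moves left, since the later thread's lowest available choice may be forced to jump over the earlier thread's occupied cell. Consequently Case~1 is not vacuous, contrary to your conclusion. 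Concretely, take the Kohnert diagram with cells in positions $(\mathrm{row},\mathrm{col})$ equal to $(3,4),(3,3),(3,2),(3,1),(1,3),(1,2),(4,1)$, which is in $\KM(0,0,4,3)$ and carries the Kohnert tableau with label $4$ on $(1,3),(1,2),(4,1)$ and label $3$ elsewhere. The first thread is $\{(3,4),(3,3),(3,2),(3,1)\}$ (label $3$) and the second is $\{(1,3),(1,2),(4,1)\}$ (label $4$); column $2$ then has the inversion $i=3$ above $j=4$, while thread $j=4$ does extend to column $3$. Your Case~1 ``proves'' this cannot happen and therefore supplies no verification of (iv) for it.

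The repair is essentially the paper's argument, which avoids your case split entirely: from the inversion one deduces, using order preservation in the correct direction, that the thread labelled $i$ is constructed \emph{earlier} and is \emph{strictly longer} than the thread labelled $j$ (if it were later, being above $j$ at column $k$ would force it to end above $j$ at column $1$, giving $i>j$). Being strictly longer it has a cell in column $k+1$, say in row $s_i$, and if $s_i\le r_2$ then the earlier $i$-thread would have selected the still-available cell $(r_2,k)$ rather than $(r_1,k)$; hence $s_i>r_2$, which is exactly (iv). This is the closing move of your Case~2 applied uniformly. Your sketch for quasi-Kohnert condition (ii) can be salvaged, since there the thread that sits above happens to be the later one and order preservation runs in the usable direction, but as written it leans on the ``spirit of Case~1'' and needs to be restated with the correct orientation.
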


\begin{proof}
  Threads select at most one cell per column, are labeled by the leftmost cell, and weakly descend from left to right, establishing Kohnert tableau conditions (i), (ii) and (iii) and quasi-Kohnert tableau condition (i). To prove Kohnert tableau condition (iv), suppose the thread map created an inversion. This means a thread $t$ started below a thread $s$ but ended above. Clearly threads can cross at most once and only if they are of different lengths with the shorter (which is a later thread) ending in a higher row than the longer, so $t$ is strictly shorter and later than $s$. Consider a column $j$ where $t$ is below $s$. Then the $s$ in column $j+1$ (which exists, since $s$ is strictly longer than $t$) must be strictly above the $t$ in column $j$, otherwise the thread $s$ would have selected the cell occupied by $t$ in column $j$, since $s$ is earlier than $t$. To prove quasi-Kohnert condition (ii), if a cell $C$ in column $j$ is used by $s$ and the cell immediately right is used by $t\ge s$, then $t$ must be weakly shorter than $s$, otherwise $t$ would have been earlier than $s$ and would have selected $C$.
\end{proof}

For example, the images of the elements of $\QKT(0,2,3,2)$ under $\theta$ are shown in Figure~\ref{fig:quasi}.

Since every thread starts in the rightmost available column and uses a cell from every column weakly left of where it starts, the number and lengths of the threads of $T \in \KM(a)$ are determined by $a$. Therefore the content of $\theta(T)$ must be a composition that flattens to some rearrangement of $\flatten(a)$. In fact, it may always be taken to be the nearest rearrangement that left dominates $a$ in the following sense.

A \emph{left swap} on a weak composition $a$ exchanges two parts $a_i < a_j$ with $i<j$. Let $\lswap(a)$ be the set of weak compositions obtainable via some (possibly empty) sequence of left swaps on $a$. For example, we have
\begin{displaymath}
  \lswap(0,2,3,2) = \left\{ \begin{array}{c}
    (0,2,3,2), (2,0,3,2), (2,3,0,2), (2,3,2,0), \\
    (0,3,2,2), (3,0,2,2), (3,2,0,2), (3,2,2,0), (2,2,3,0)
    \end{array} \right\}.
\end{displaymath}
Form equivalence classes for elements of $\lsort(a)$ by identifying elements that flatten to the same composition, and within each class select the minimal element in dominance order to comprise the set $\Qlswap(a)$, i.e.
\begin{equation}
  \Qlswap(a) = \{ b \in \lswap(a) \mid \mbox{ if $c \in \lswap(a)$ and $\flatten(c) = \flatten(b)$, then $c \geq b$} \}.
  \label{e:Qlswap}
\end{equation}
Continuing with the example, we have
\begin{displaymath}
  \Qlswap(0,2,3,2) = \{ (0,2,3,2), (0,3,2,2), (2,2,3,0) \}.
\end{displaymath}

\begin{theorem}
  The thread map $\theta$ induces a weight-preserving bijection
  \[ \theta \ : \ \QKT(a) \stackrel{\sim}{\longrightarrow} \bigcup_{b \in\Qlswap(a)} \QqKT(b),\]
  where $\Qlswap(a)$ is defined in \eqref{e:Qlswap}. In particular, we have 
  \begin{equation}
    \key_a = \sum_{b \in \Qlswap(a)} \qkey_b.
  \end{equation}
   \label{thm:key-qkey}
\end{theorem}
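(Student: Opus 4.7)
The plan is to show that $\theta$ restricts to a weight-preserving bijection onto $\bigsqcup_{b \in \Qlswap(a)} \QqKT(b)$, and then derive the identity using the fundamental slide expansions of $\key_a$ and $\qkey_b$ established earlier. The preceding lemma already establishes that $\theta(T)$ is a quasi-Kohnert tableau, and weight preservation is immediate since $\theta$ only relabels cells. For the quasi-Yamanouchi condition, note that condition (2) is purely positional and thus preserved; for condition (1) at row $i$ of $\theta(T)$, observe that by construction each cell in column 1 of $\theta(T)$ at row $r$ is labeled $r$, so (1) holds whenever column 1 of $T$ has a cell in row $i$. Otherwise, the absence of such a cell in $T$ (together with Kohnert condition (ii)) precludes a label $i$ appearing in row $i$ of $T$, so the quasi-Yamanouchi hypothesis on $T$ forces condition (2), which passes to $\theta(T)$.

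Next I would verify $b \in \Qlswap(a)$. The nonzero parts of $b$ are the thread lengths indexed by their ending rows in column 1; by the column-height calculation in Lemma~\ref{lem:well-label}, the multiset of thread lengths agrees with the multiset of nonzero parts of $a$, so $b$ is a rearrangement of $a$. Using Kohnert tableau condition (ii), each column 1 cell in $T$ labeled $i$ lies in some row $r_i \leq i$; this yields a bijection between the nonzero positions of $a$ and those of $b$ that only moves values to smaller indices, and tracking partial sums shows $b \geq a$ in dominance, hence $b \in \lswap(a)$. Finally, $b$ is the dominance minimum in its flatten-class because the positions $r_i$ are packed as low as possible, enforced by $r_i \leq i$ together with the strictly increasing labels in column 1 of $\theta(T)$.

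To construct the inverse, given $U \in \QqKT(b)$ with $b \in \Qlswap(a)$, let $D$ be its underlying diagram and set $T := L_a(D)$ for $L_a$ the Kohnert labeling. Since $b \in \lswap(a)$, the key diagrams $D_a$ and $D_b$ are connected by Kohnert moves, so $\KM(b) \subseteq \KM(a)$ and $D \in \KM(a)$; thus $L_a(D)$ is well-defined and in $\KT(a)$. The thread decomposition depends only on $D$, so the threads of $T$ coincide with those of $U$, yielding $\theta(T) = U$; the quasi-Yamanouchi condition for $T$ is inherited from that of $U$ via the positional condition (2) and the row-label analysis from above. Uniqueness is immediate from determinism of the Kohnert labeling algorithm.

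Combining the bijection with the fundamental slide expansions of $\key_a$ and $\qkey_b$ gives
\begin{equation*}
  \key_a = \sum_{T \in \QKT(a)} \Fund_{\wt(T)} = \sum_{b \in \Qlswap(a)} \sum_{U \in \QqKT(b)} \Fund_{\wt(U)} = \sum_{b \in \Qlswap(a)} \qkey_b.
\end{equation*}
The main obstacle will be verifying $b \in \Qlswap(a)$: the dominance inequality $b \geq a$ and the minimality in the flatten-class both require careful bookkeeping of how thread lengths and their column 1 endpoint rows encode the left-swap structure of the permuted content, using the interaction between the thread decomposition and Kohnert tableau conditions (ii) and (iii).
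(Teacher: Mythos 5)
Your proposal follows essentially the same route as the paper: the preceding lemma supplies quasi-Kohnertness, $\theta$ is injective because it preserves the underlying diagram (which determines the tableau), the inverse is given by relabeling with $L_a$ using $\KM(b)\subseteq\KM(a)$ for $b\in\lswap(a)$, and the identity follows from the fundamental slide expansions of $\key_a$ and $\qkey_b$. The step you flag as the main obstacle --- verifying that the content of $\theta(T)$ lies in $\Qlswap(a)$, and that the relabeled diagram of $U\in\QqKT(b)$ threads back to $U$ itself --- is treated no more carefully in the paper's own proof, which essentially asserts that $\Qlswap(a)$ selects exactly the right representatives.
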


\begin{proof}
  The thread map is injective since it does not change the shapes and no two Kohnert tableaux have the same shape. The inverse map is given by deleting all labels and relabeling using Definition~\ref{def:labelling_algorithm} and the entries from (a). This is well-defined since if $b \in\lswap(a)$, then the key diagram for $b$ appears as a Kohnert diagram for $a$, and injective since all underlying shapes on the right hand side have different threading patterns, thus no two of these shapes are identical, and $\Qlswap(a)$ precisely selects the maximal elements.
\end{proof}

For example, we have
\begin{displaymath}
  \key_{0232} = \qkey_{0232} + \qkey_{0322} + \qkey_{2230} .
\end{displaymath}

%
\section{Stability of key and quasi-key polynomials}
%
\label{sec:stable}

\subsection{Schur polynomials}
\label{sec:key-schur}

We briefly recall Schur polynomials, and we refer the reader to the beautiful exposition in \cite{Mac95} for further details. Given a partition $\lambda$, a \emph{semi-standard Young tableau of shape $\lambda$} is a left-justified diagram with $\lambda_i$ cells in row $i$ such that entries weakly increase within rows and strictly increase within columns. Let $\SSYT_n(\lambda)$ denote those semi-standard Young tableaux of shape $\lambda$ for which the largest entry is at most $n$. For example, see Figure~\ref{fig:tableaux}.

\begin{figure}[ht]
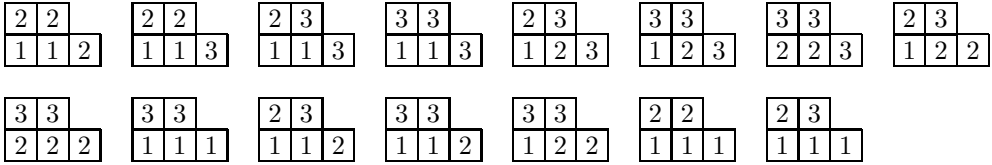

  \begin{center}
    \begin{displaymath}
      \begin{array}{c@{\hskip\cellsize}c@{\hskip\cellsize}c@{\hskip\cellsize}c@{\hskip\cellsize}c@{\hskip\cellsize}c@{\hskip\cellsize}c@{\hskip\cellsize}c}
        \tableau{ 2 & 2 \\ 1 & 1 & 2 } &
        \tableau{ 2 & 2 \\ 1 & 1 & 3 } &
        \tableau{ 2 & 3 \\ 1 & 1 & 3 } &
        \tableau{ 3 & 3 \\ 1 & 1 & 3 } &
        \tableau{ 2 & 3 \\ 1 & 2 & 3 } &
        \tableau{ 3 & 3 \\ 1 & 2 & 3 } &
        \tableau{ 3 & 3 \\ 2 & 2 & 3 } &
        \tableau{ 2 & 3 \\ 1 & 2 & 2 } \\ \\
        \tableau{ 3 & 3 \\ 2 & 2 & 2 } &
        \tableau{ 3 & 3 \\ 1 & 1 & 1 } &
        \tableau{ 2 & 3 \\ 1 & 1 & 2 } &
        \tableau{ 3 & 3 \\ 1 & 1 & 2 } &
        \tableau{ 3 & 3 \\ 1 & 2 & 2 } &
        \tableau{ 2 & 2 \\ 1 & 1 & 1 } & 
        \tableau{ 2 & 3 \\ 1 & 1 & 1 } &
      \end{array}
    \end{displaymath}
    \caption{\label{fig:tableaux}The set $\SSYT_3(3,2)$ of semi-standard Young tableaux of shape $(3,2)$ with largest entry $3$.}
  \end{center}
\end{figure}

The \emph{Schur polynomial indexed by $\lambda$} is given by
\begin{equation}
  s_{\lambda}(x_1,\ldots,x_n) = \sum_{T \in \SSYT_n(\lambda)} x^{\wt(T)},
\end{equation}
where the \emph{weight} of a tableau is the composition whose $i$th part is the number of cells with entry equal to $i$. For example, from Figure~\ref{fig:tableaux} we compute
\begin{eqnarray}
  s_{32}(x_1,x_2,x_3) & = & x^{230} + 2 x^{221} + 2 x^{212} + x^{203} + 2 x^{122} + x^{113} \\\nonumber
  & & + x^{023} + x^{131} + x^{032} + x^{320} + x^{311} + x^{302}.
\end{eqnarray}

Macdonald \cite{Mac91} first observed that if $a$ is increasing, i.e. $a = (a_1 \leq a_2 \leq \cdots \leq a_n)$, then the corresponding key polynomial is, in fact, a Schur polynomial.

\begin{proposition}[\cite{Mac91}]
  For $a = (a_1 \leq a_2 \leq \cdots \leq a_n)$ a weak composition of length $n$ in weakly increasing order, we have
  \begin{equation}
    \key_a = s_{\rev(a)} (x_1, \ldots, x_n),
  \end{equation}
  where $\rev(a)$ is the reversal of $a$ with $0$s removed.
  \label{prop:key-schur}
\end{proposition}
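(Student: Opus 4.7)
The plan is to construct a weight-preserving bijection $\phi : \KT(a) \to \SSYT_n(\lambda)$ with $\lambda = \rev(a)$ and then invoke Theorem~\ref{thm:kohnert} to conclude
\[
\key_a \;=\; \sum_{T \in \KT(a)} x^{\wt(T)} \;=\; \sum_{S \in \SSYT_n(\lambda)} x^{\wt(S)} \;=\; s_\lambda(x_1,\ldots,x_n).
\]

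First I would record the rigidity of the column structure: for $a = (a_1 \leq \cdots \leq a_n)$ weakly increasing, every $T \in \KT(a)$ has the same column profile as $\lambda$, with column $j$ containing exactly $\#\{i : a_i \geq j\}$ cells carrying the label set $\{t_j, t_j+1, \ldots, n\}$, where $t_j$ is the smallest $i$ with $a_i \geq j$. Both $\KT(a)$ and $\SSYT_n(\lambda)$ are thus fillings of this common column shape, so $\phi$ should match the row positions of cells on one side with entries on the other.

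To define $\phi(T)$ I would record the row indices of $T$'s cells and arrange them to form a filling of shape $\lambda$. Column-strictness is immediate since rows within a column of $T$ are distinct. The row-weakly-increasing SSYT condition is more delicate: the Kohnert-diagram condition of Lemma~\ref{lem:diagram} is not sufficient, and a naive sort of each row fails to be injective since distinct Kohnert tableaux can produce the same filling after sorting. The correct rule must therefore use the Kohnert labels to break ties, for instance through a jeu-de-taquin-style rectification or an RSK-type insertion applied to a suitable reading word of $T$. Weight preservation is then immediate: each cell in row $r$ of $T$ contributes an entry $r$ to $\phi(T)$.

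The main obstacle is establishing bijectivity, which amounts to constructing an explicit inverse $\psi : \SSYT_n(\lambda) \to \KT(a)$. Given $S \in \SSYT_n(\lambda)$, the rows of $S$ supply the multiset of row indices per horizontal strip of the Kohnert diagram; one must distribute these row indices across columns subject to Lemma~\ref{lem:diagram} and the KT conditions (i)--(iv), and show this distribution is uniquely forced. If a clean explicit bijection proves elusive, a viable fallback is to prove $\key_a$ is $S_n$-symmetric for $a$ weakly increasing by constructing Bender-Knuth-style involutions on $\KT(a)$, then identify $\key_a$ with $s_\lambda$ via the unitriangularity of both in the monomial symmetric basis and their common minimal monomial $x^a$.
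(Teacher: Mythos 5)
Your high-level strategy --- a bijection between $\KT(a)$ and $\SSYT_n(\rev(a))$ combined with Theorem~\ref{thm:kohnert} --- is exactly the paper's, and your structural observations (rigid column profile, column-strictness from distinct row indices within a column) are correct. But there is a genuine gap: the bijection is never actually defined. You correctly diagnose that sorting the row indices within each column is not injective and that the Kohnert labels must break the ties, but you then only gesture at ``jeu-de-taquin-style rectification or RSK-type insertion'' without specifying a map, and you explicitly defer the construction of the inverse. The missing idea is the one observation that makes this case easy: when $a$ is weakly increasing, no Kohnert tableau of content $a$ has a column inversion (an inversion of $i<j$ in some column forces, via conditions (iii) and (iv), an inversion in every column to its right up through column $a_i$, and finally an $i$ in column $a_i+1$, which is impossible since $a_i\le a_j$). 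Consequently every cell sits directly below its unique original position in the Yamanouchi Kohnert tableau, whose shape is the Young diagram of $\rev(a)$. The paper's map then assigns to the cell with label $i$ the entry $(n+1-i)$ plus the number of rows it has fallen, leaving the cell at its original position: condition (iii) gives weakly increasing rows, the absence of inversions gives strictly increasing columns, and the inverse is immediate (the fall distance is the entry minus the row index, so let each cell drop that far and relabel). Without something of this kind, your $\phi$ is not a well-defined map and bijectivity cannot even be addressed.

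A secondary issue is that you insist on a weight-\emph{preserving} bijection in which a cell in row $r$ contributes an entry $r$; the natural bijection here is weight-\emph{reversing}, and the paper finishes by invoking the symmetry of $s_{\rev(a)}$ in $x_1,\dots,x_n$ (it explicitly calls the map ``weight-reversing''). Your fallback --- Bender--Knuth-style involutions on $\KT(a)$ plus unitriangularity against the common minimal monomial $x^a$ --- is a legitimate alternative route in principle, but it is also not carried out, and constructing such involutions on Kohnert tableaux is itself a nontrivial task that the explicit bijection above renders unnecessary.
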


We prove and generalize this result using Kohnert tableaux as follows.

\begin{theorem}
  If $a_{\ell} = 0$ for all $\ell > k$, then $\key_a$ is symmetric in $x_1,\ldots,x_k$ if and only if $(a_1,\ldots,a_k)$ is weakly increasing. Moreover, in this case, $\key_a = s_{\rev(a)}(x_1,\ldots,x_k)$.
  \label{thm:symmetric}
\end{theorem}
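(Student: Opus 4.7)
My plan is to treat the two directions of the biconditional separately.

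For the easy direction, I would argue by contrapositive. Suppose $(a_1,\ldots,a_k)$ is not weakly increasing, and pick $i<k$ with $a_i > a_{i+1}$. As noted immediately before Lemma~\ref{lem:diagram}, Kohnert moves strictly raise weight in dominance order, so the Yamanouchi Kohnert tableau is the only element of $\KT(a)$ of weight $a$; in particular $x^a$ has coefficient exactly $1$ in $\key_a$. Let $b$ denote the composition obtained from $a$ by swapping $a_i$ and $a_{i+1}$. Then $b_1+\cdots+b_i = a_1+\cdots+a_i - (a_i - a_{i+1}) < a_1+\cdots+a_i$, so $b \not\geq a$ in dominance order, and hence $x^b$ has coefficient $0$ in $\key_a$. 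But $b$ is a permutation of the first $k$ parts of $a$ (with the zero tail preserved), so symmetry of $\key_a$ in $x_1,\ldots,x_k$ would force the coefficients of $x^a$ and $x^b$ to agree, a contradiction.

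For the harder direction, assume $(a_1,\ldots,a_k)$ is weakly increasing and write $\lambda = \rev(a)$. The goal is to produce a weight-preserving bijection $\phi \colon \KT(a) \to \SSYT_k(\lambda)$; combined with Theorem~\ref{thm:kohnert} and the standard tableau description of Schur polynomials, this yields $\key_a = s_\lambda(x_1,\ldots,x_k)$. The key structural input is that for weakly increasing $a$, every $T \in \KT(a)$ has column-heights $m_j = |\{i : a_i \geq j\}| = \lambda'_j$, exactly matching those of $\lambda$ in French convention; moreover the labels in column $j$ of $T$ are forced to fill the interval $\{i_j,\ldots,k\}$ with $i_j = \min\{i : a_i \geq j\}$. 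Using the row indices of $T$'s cells as entries of $\phi(T)$ makes weight-preservation automatic and column strictness free (distinct cells in a column sit in distinct rows), so the real task is to specify which cell of $T$ maps to which position of $\lambda$ and to verify rowwise weak monotonicity and bijectivity.

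The main obstacle is this entry assignment, which must be handled delicately enough to encode sufficient Kohnert tableau structure for rows to come out monotone and $\phi$ to be reversible. A naive bottom-justification by columns fails: for $a=(0,2,3)$ the Kohnert diagram $\{(1,2),(2,1),(2,3),(3,1),(3,2)\}$ bottom-justifies to a tableau with bottom row $(2,1,2)$, and even sorting each row's multiset is not injective (two distinct Kohnert diagrams can share the same sorted layer profile). The correct rule must leverage conditions (iii)--(iv) of Definition~\ref{def:kohnert} together with Lemma~\ref{lem:diagram} to thread cells across column boundaries, in the spirit of jeu de taquin. An alternative route sidesteps the explicit bijection: first prove $\key_a$ is symmetric in $x_1,\ldots,x_k$ via a Bender--Knuth-style involution on $\KT(a)$ exchanging row-$i$ and row-$(i+1)$ cell counts, and then identify $\key_a = s_\lambda(x_1,\ldots,x_k)$ through Schur-basis coefficient comparison, exploiting the dominance-triangularity of Kostka numbers together with $K_{\lambda,a}=1$ (since $a$ is a rearrangement of $\lambda$) and the unit coefficient of $x^a$ in $\key_a$.
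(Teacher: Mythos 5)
Your first paragraph (the ``only if'' direction) is correct and is essentially the paper's argument: every Kohnert move strictly raises the weight in dominance order, so $x^a$ is the unique dominance-minimal monomial of $\key_a$ with coefficient $1$, while the adjacent swap $b$ satisfies $b\not\geq a$ and hence cannot appear. No issues there.

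The ``if'' direction, however, is not a proof: you correctly identify that the whole content lies in specifying which cell of $T\in\KT(a)$ corresponds to which position of $\lambda=\rev(a)$, and then you stop, offering only the assertion that ``the correct rule must leverage conditions (iii)--(iv)\dots in the spirit of jeu de taquin'' and, alternatively, an unconstructed Bender--Knuth-style involution. The missing idea is precisely the one the Kohnert tableau labels were designed to supply, and it makes the assignment trivial rather than delicate: a cell of $T$ with label $i$ in column $j$ originated at position $(i,j)$ of the key diagram $D_a$ (whose shape is the Young diagram of $\lambda$), so one sends that cell to position $(i,j)$ and records as its entry $k+1-i$ plus the number of rows it has fallen, i.e.\ $k+1-r$ where $r$ is its current row. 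Your worked example for $a=(0,2,3)$ analyzes only the unlabeled diagram, which is why bottom-justification looks ambiguous; with the labels the map is manifestly well defined and invertible (place each cell back at $(i,j)$ and let it fall by the amount its entry exceeds $k+1-i$). The remaining verifications then come from exactly the structure you cite: condition (iii) gives weak increase along rows (fall distances weakly increase left to right among cells of a fixed label), and the observation that a weakly increasing content admits no column inversions, combined with condition (i), gives strict increase in columns. Note also that this natural map is weight-\emph{reversing}, not weight-preserving as you assert; one needs the symmetry of $s_\lambda$ to conclude $\key_a=s_{\rev(a)}(x_1,\ldots,x_k)$. Your fallback route (a Bender--Knuth involution on $\KT(a)$ plus triangularity of Kostka numbers) is plausible in outline but the involution is itself a nontrivial construction that you have not carried out, so as written neither route closes the gap.
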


\begin{proof}
  The monomial $x_1^{a_1}\cdots x_k^{a_k}$ associated to the Yamanouchi Kohnert tableau is the leading term in reverse lexicographic order. If $a_i>a_j$ for some $i<j$ and $\key_a$ is symmetric, it must contain the monomial $x_1^{a_1}\cdots x_i^{a_j} \cdots x_j^{a_i} \cdots x_k^{a_k}$, which is impossible since it dominates the leading term.

Now suppose $(a_1,\ldots,a_k)$ is weakly increasing. We claim the Kohnert tableaux for $a$ are in bijection with the semistandard Young tableaux of shape $\rev(a)$ with entries at most $k$. To see this, note the Yamanouchi Kohnert tableau for $a$ has the shape of the Young diagram for $\rev(a)$ (in English notation), and since the row lengths weakly increase, no Kohnert tableau of content $a$ can have a column inversion. Define a map from $\KT(a)$ to $\SSYT_k(\rev(a))$ by replacing each label $i$ with $k+1-i$, then adding to that the number of positions the cell has fallen from its original position. Condition (iii) ensures the rows of the image weakly increase, and the fact there are no inversions ensures the columns of the image strictly increase, so the image is indeed an $\SSYT$. This map is clearly a bijection: to define an inverse, let $T\in \SSYT_k(\rev(a))$ (in English notation), and for each cell $C$ of $T$, let $r_C$ denote the difference between the label of $C$ and the row index of $C$. Then place $T$ in $\mathbb{N}\times \mathbb{N}$ in the position of the Yamanouchi diagram of $a$, and for each cell $C$, allow it to fall $r_C$ units and replace its label $i$ with $k+1-i$.
\end{proof}

\begin{figure}[ht]
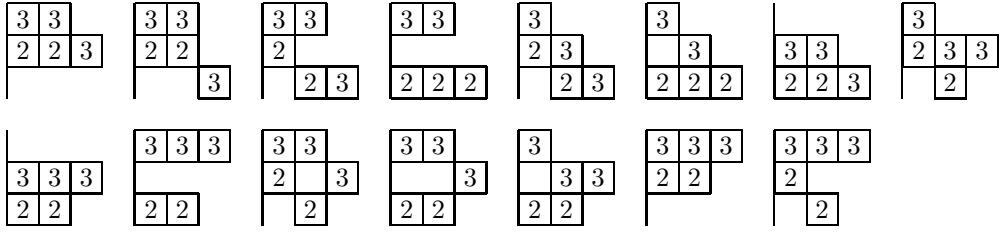

  \begin{center}
    \begin{displaymath}
      \begin{array}{c@{\hskip\cellsize}c@{\hskip\cellsize}c@{\hskip\cellsize}c@{\hskip\cellsize}c@{\hskip\cellsize}c@{\hskip\cellsize}c@{\hskip\cellsize}c}
        \vline\tableau{ 3 & 3 \\ 2 & 2 & 3 } &
        \vline\tableau{ 3 & 3 \\ 2 & 2 \\ & & 3 } &
        \vline\tableau{ 3 & 3 \\ 2 \\ & 2 & 3 } &
        \vline\tableau{ 3 & 3 \\ \\ 2 & 2 & 2 } &
        \vline\tableau{ 3 \\ 2 & 3 \\ & 2 & 3 } &
        \vline\tableau{ 3 \\ & 3 \\ 2 & 2 & 2 } &
        \vline\tableau{ \\ 3 & 3 \\ 2 & 2 & 3 } &
        \vline\tableau{ 3 \\ 2 & 3 & 3 \\ & 2 } \\ \\
        \vline\tableau{ \\ 3 & 3 & 3 \\ 2 & 2 } &
        \vline\tableau{ 3 & 3 & 3 \\ \\ 2 & 2 } &
        \vline\tableau{ 3 & 3 \\ 2 & & 3 \\ & 2 } &
        \vline\tableau{ 3 & 3 \\ & & 3 \\ 2 & 2 } &
        \vline\tableau{ 3 \\ & 3 & 3 \\ 2 & 2 } &
        \vline\tableau{ 3 & 3 & 3 \\ 2 & 2 \\ } & 
        \vline\tableau{ 3 & 3 & 3 \\ 2 \\ & 2 } 
      \end{array}
    \end{displaymath}
    \caption{\label{fig:kohnert_image}The set $\KT(0,2,3)$ of Kohnert tableaux of content $(0,2,3)$.}
  \end{center}
\end{figure}

For example, the weight-reversing bijection $\KT(0,2,3) \rightarrow \SSYT_3(3,2)$ can be seen by comparing Figure~\ref{fig:kohnert_image} with Figure~\ref{fig:tableaux} entry wise.

\subsection{Key polynomials stabilize to Schur functions}
\label{sec:stable-key}

In addition to the compacted expansion, another great advantage of expressing a key polynomial in terms of fundamental slide polynomials is that, as we take the limit, the number of terms in the fundamental slide expansion eventually stabilizes. For example, the five quasi-Yamanouchi Kohnert tableaux of content $(0,0,3,2)$ are shown in Figure~\ref{fig:qYam_kohnert_stable}, and there are only five quasi-Yamanouchi Kohnert tableaux of content $0^m \times (3,2)$ for any $m \geq 2$, where $0^m \times a$ denotes the composition obtained by prepending $m$ zeros to $a$. 

\begin{figure}[ht]
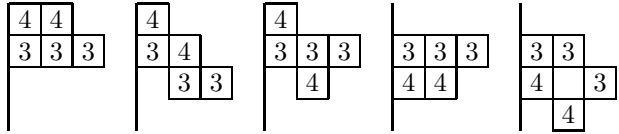

  \begin{center}
    \begin{displaymath}
      \begin{array}{c@{\hskip\cellsize}c@{\hskip\cellsize}c@{\hskip\cellsize}c@{\hskip\cellsize}c}
        \vline\tableau{ 4 & 4 \\ 3 & 3 & 3 \\ } &
        \vline\tableau{ 4 \\ 3 & 4 \\ & 3 & 3 } &
        \vline\tableau{ 4 \\ 3 & 3 & 3 \\ & 4 } &
        \vline\tableau{ \\ 3 & 3 & 3 \\ 4 & 4 } &
        \vline\tableau{ \\ 3 & 3  \\ 4 &  & 3 \\ & 4 } 
      \end{array}
    \end{displaymath}
    \caption{\label{fig:qYam_kohnert_stable}The set $\QKT(0,0,3,2)$ of quasi-Yamanouchi Kohnert tableaux of content $(0, 0, 3, 2)$.}
  \end{center}
\end{figure}

Recall that the \emph{Schur functions} are the stable limit of Schur polynomials \cite{Mac95}, 
\begin{equation}
  s_{\lambda}(X) = \lim_{n \rightarrow \infty} s_{\lambda}(x_1,\ldots,x_n).
  \label{e:stable-schur}
\end{equation}

Therefore, by Theorem~\ref{thm:symmetric}, key polynomials indexed by increasing compositions stabilize to Schur functions. We show that this result holds for arbitrary compositions as well.

To begin, we recall Gessel's \emph{fundamental quasisymmetric functions} \cite{Ges84}. For $\alpha$ a strong composition, i.e. $\alpha_i>0$ for all $i$, define $F_{\alpha}(X)$ by
\begin{equation}
  F_{\alpha}(X) = \sum_{\flatten(b) \ \mathrm{refines} \ \alpha} x^b,
  \label{e:F}
\end{equation}
where the sum is over weak compositions whose flattening refines $\alpha$. 

As in \cite{AS17}, we say that a semi-standard Young tableau is \emph{quasi-Yamanouchi} if for all $i>1$, the leftmost occurrence of $i$ lies weakly left of some $i-1$. Let $\QYT_n(\lambda)$ denote the set of quasi-Yamanouchi tableaux with largest entry at most $n$. For example, see Figure~\ref{fig:qYam_tableaux}.

\begin{figure}[ht]
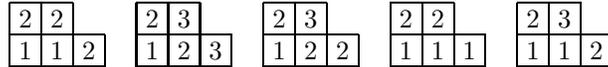

  \begin{center}
    \begin{displaymath}
      \begin{array}{c@{\hskip\cellsize}c@{\hskip\cellsize}c@{\hskip\cellsize}c@{\hskip\cellsize}c}
        \tableau{ 2 & 2 \\ 1 & 1 & 2 } &
        \tableau{ 2 & 3 \\ 1 & 2 & 3 } &
        \tableau{ 2 & 3 \\ 1 & 2 & 2 } &
        \tableau{ 2 & 2 \\ 1 & 1 & 1 } &
        \tableau{ 2 & 3 \\ 1 & 1 & 2 } 
      \end{array}
    \end{displaymath}
    \caption{\label{fig:qYam_tableaux}The set $\QYT(3,2)$ of quasi-Yamanouchi tableaux of shape $(3, 2)$.}
  \end{center}
\end{figure}

Using Gessel's fundamental expansion of a Schur function and the bijection between quasi-Yamanouchi tableaux and standard Young tableaux (\cite{AS17}), we have the following.

\begin{proposition}[\cite{AS17}]
  For $\lambda$ a partition of $k$ and $n \geq k-\lambda_1+1$, we have
  \begin{equation}
    s_{\lambda}(X) = \sum_{T \in \QYT_n(\lambda)} F_{\wt(T)}(X).
    \label{e:schur-F}
  \end{equation}
  \label{prop:schur-F}
\end{proposition}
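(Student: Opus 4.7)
The plan is to lift Gessel's classical expansion
\[ s_\lambda(X) \;=\; \sum_{T \in \mathrm{SYT}(\lambda)} F_{\Des(T)}(X) \]
to the quasi-Yamanouchi level by grouping standard tableaux of shape $\lambda$ according to a destandardization map on SSYTs, in direct parallel with Definition~\ref{def:destand}, and then to establish the stability threshold that makes the finite sum agree with the symmetric function.

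First I would introduce a destandardization $\mathrm{dst}: \mathrm{SSYT}_n(\lambda) \to \mathrm{QYT}_n(\lambda)$ by the rule: while some entry $i>1$ has its leftmost occurrence strictly right of every occurrence of $i-1$, decrement every entry $j \geq i$ by one, iterating until the result is quasi-Yamanouchi. A routine check shows that the tableau remains semi-standard at each step (since the collapsed row of $i$'s lies strictly right of all $i-1$'s, so no column violation is created), that the procedure terminates exactly on the quasi-Yamanouchi tableaux, and hence that $\mathrm{dst}$ is a well-defined surjection fixing $\mathrm{QYT}_n(\lambda)$ pointwise.

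Second, I would analyze the fibers. For fixed $U \in \mathrm{QYT}_n(\lambda)$, reconstructing a preimage $T$ amounts to splitting, for each value $i$ appearing in $U$, the $\wt(U)_i$ many cells labeled $i$ into consecutive labeled sub-blocks; this produces a bijection between $\mathrm{dst}^{-1}(U) \cap \mathrm{SSYT}_n(\lambda)$ and weak compositions $b$ of length $n$ satisfying $b \geq \wt(U)$ with $\flatten(b)$ refining $\flatten(\wt(U))$. Each $T$ in the fiber contributes $x^{\wt(T)} = x^b$, so summing over the fiber gives exactly $F_{\wt(U)}(X)$ via \eqref{e:F}. Specializing to $b = (1^k)$, the map restricts to a bijection $\mathrm{dst}: \mathrm{SYT}(\lambda) \to \mathrm{QYT}_n(\lambda)$ sending $\Des(T)$ to $\wt(\mathrm{dst}(T))$, so grouping Gessel's expansion by destandardization yields the claimed finite identity
\[ s_\lambda(x_1,\ldots,x_n) \;=\; \sum_{U \in \mathrm{QYT}_n(\lambda)} F_{\wt(U)}(x_1,\ldots,x_n). \]

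The main obstacle, and the reason for the explicit hypothesis $n \geq k - \lambda_1 + 1$, is to verify that this truncated identity already captures all of $s_\lambda(X)$. I would prove that any $U \in \mathrm{QYT}(\lambda)$ uses entries at most $k - \lambda_1 + 1$ by viewing $U$ as a chain of horizontal strips $\emptyset = \mu^{(0)} \subset \mu^{(1)} \subset \cdots \subset \mu^{(m)} = \lambda$, where $\mu^{(i)}$ collects the cells with entry $\leq i$. The quasi-Yamanouchi condition forces the leftmost column of $\mu^{(i+1)}\setminus\mu^{(i)}$ to lie weakly left of the rightmost column of $\mu^{(i)}\setminus\mu^{(i-1)}$, and a short cell-counting argument on $\lambda\setminus\text{(row 1)}$ then caps $m$ at $k - \lambda_1 + 1$, with equality realized by the destandardization of the column-reading standard tableau. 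Consequently $\mathrm{QYT}_n(\lambda) = \mathrm{QYT}_{n+1}(\lambda)$ for all $n \geq k - \lambda_1 + 1$, so the right-hand side of the displayed identity stabilizes at this threshold and matches the limit \eqref{e:stable-schur} defining $s_\lambda(X)$, completing the proof.
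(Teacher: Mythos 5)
The paper does not actually prove this proposition; it is imported from \cite{AS17} with the one-line justification that it follows from Gessel's expansion $s_\lambda=\sum_{S\in\mathrm{SYT}(\lambda)}F_{\Des(S)}$ together with the weight-preserving bijection between $\QYT_n(\lambda)$ and $\mathrm{SYT}(\lambda)$, and your argument reconstructs exactly that route (destandardization of semi-standard tableaux, fibers giving fundamental quasisymmetric functions, and the bound $n\ge k-\lambda_1+1$ on the number of values a quasi-Yamanouchi tableau can use). One detail is misstated, though it does not derail the proof: the fiber $\destand^{-1}(U)\cap\SSYT_n(\lambda)$ is \emph{not} in bijection with $\{b\ge\wt(U) : \flatten(b)\text{ refines }\flatten(\wt(U))\}$ --- that dominance-restricted set indexes the fundamental \emph{slide} polynomial $\Fund_{\wt(U)}$ of \eqref{e:fundamental-shift}, and it omits genuine preimages such as the tableau obtained from $U$ by shifting every value up by one, whose content $(0,\alpha_1,\dots,\alpha_m,0,\dots)$ lies strictly \emph{below} $\wt(U)$ in dominance yet destandardizes back to $U$ under your own rule (the condition at $i=2$ holds vacuously when no $1$ appears). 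The correct fiber is the set of all $b$ of length $n$ with $\flatten(b)$ refining $\flatten(\wt(U))$, which is precisely the index set in \eqref{e:F}; since that is the formula you ultimately invoke, your displayed identity and the stabilization argument that follows are correct as written --- just drop the dominance condition from the fiber description. The remaining ingredients (that $\destand$ restricts to a bijection $\mathrm{SYT}(\lambda)\to\QYT_n(\lambda)$ carrying $\Des$ to $\flatten(\wt)$, and that a quasi-Yamanouchi tableau of shape $\lambda$ uses at most $k-\lambda_1+1$ distinct entries, with equality attained) are sound and yield the stated threshold.
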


For example, from Figure~\ref{fig:qYam_tableaux} we compute
\begin{displaymath}
  s_{32}(X) = F_{23}(X) + F_{122}(X) + F_{131}(X) + F_{32}(X) + F_{221}(X).
\end{displaymath}

We claim that Proposition~\ref{prop:schur-F} is the stable limit of the fundamental slide expansion of a key polynomial. To see this, recall another result from \cite{AS17} that shows that the stable limit of a fundamental slide polynomial is a fundamental quasisymmetric function.

\begin{theorem}[\cite{AS17}]
  For a weak composition $a$, we have
  \begin{equation}
    \lim_{m\rightarrow\infty}\Fund_{0^m \times a} = F_{\flatten(a)}(X).
  \end{equation}
  \label{thm:stable-slide}
\end{theorem}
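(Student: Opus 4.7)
The plan is to unpack both sides as sums of monomials and show that they differ only in a dominance constraint on the indexing weak compositions, a constraint that becomes vacuous in the limit. By Definition~\ref{def:fundamental-shift} together with the observation $\flatten(0^m \times a) = \flatten(a)$, the polynomial $\Fund_{0^m \times a}$ is the sum of $x^b$ over weak compositions $b$ of length $m+n$ satisfying $b \geq 0^m \times a$ and $\flatten(b)$ refining $\flatten(a)$, while by \eqref{e:F}, $F_{\flatten(a)}(X)$ is the sum of $x^b$ over all weak compositions $b$ of arbitrary finite length with $\flatten(b)$ refining $\flatten(a)$. Since both expansions are multiplicity-free, it suffices to show that the coefficient of each fixed monomial $x^c$ in $\Fund_{0^m \times a}$ stabilizes to its coefficient in $F_{\flatten(a)}$ as $m \to \infty$.

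One direction is immediate: the flattening refinement condition is the same on both sides, so every monomial of $\Fund_{0^m \times a}$ already appears in $F_{\flatten(a)}(X)$, for every $m$. For the converse, fix a weak composition $c$ of finite support length $L$ (meaning $c_i = 0$ for $i > L$) with $\flatten(c)$ refining $\flatten(a)$. The key step is to verify that, for every $m \geq L$, the extension of $c$ to length $m+n$ by trailing zeros dominates $0^m \times a$. For a partial sum index $i \leq m$, the partial sum of $0^m \times a$ equals zero and the inequality is automatic. For $i = m+j$ with $1 \leq j \leq n$, the partial sum of $c$ through position $m+j$ equals the entire sum $c_1 + \cdots + c_L$ because $m+j > L$, which equals $a_1 + \cdots + a_n$ because $\flatten(c)$ and $\flatten(a)$ have the same total, and this clearly dominates $a_1 + \cdots + a_j$.

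Combining the two directions shows that for $m \geq L$ the coefficient of $x^c$ in $\Fund_{0^m \times a}$ equals its coefficient in $F_{\flatten(a)}$, while for $c$ whose flattening fails to refine $\flatten(a)$ the coefficient is zero on both sides. This yields the claimed stabilization. There is no serious obstacle; the argument is essentially a bookkeeping check at the level of definitions. The only subtle point worth flagging is that the stabilization threshold $m \geq L$ depends on the individual monomial $x^c$, which is precisely why the identity holds as a limit rather than at any finite stage.
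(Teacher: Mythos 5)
Your argument is correct. The paper does not actually prove this statement here --- it is imported verbatim from \cite{AS17} as Theorem~\ref{thm:stable-slide} --- so there is no internal proof to compare against; your proposal supplies a valid self-contained verification directly from Definition~\ref{def:fundamental-shift} and \eqref{e:F}. The two checks you make are exactly the right ones: the refinement condition is identical on both sides, and the dominance condition $c \geq 0^m \times a$ becomes vacuous once $m$ exceeds the support length of $c$, because the first $m$ partial sums of $0^m \times a$ vanish and the later ones are bounded by $|a| = |c|$ (equality of totals following from $\flatten(c)$ refining $\flatten(a)$). Your closing remark that the stabilization threshold depends on the monomial, so the identity holds only in the limit, is the one genuinely non-bookkeeping observation and is correctly placed.
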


Therefore it is enough to show that the objects over which \eqref{e:key-slide} is summed are in bijection with the objects over which \eqref{e:schur-F} is summed, such that the set of weights is preserved. For a weak composition $a$, let $\sort(a)$ denote the rearrangement of the entries of $\flatten(a)$ into non-increasing order.

\begin{definition}
  Given a weak composition $a$ of length $n$, define $\varphi: \QKT(a) \rightarrow \SSYT_n(\sort(a))$ as follows. Replace all entries in the $i$th nonempty row from the top with $i$, flip the diagram, and let entries fall until there are no gaps in the columns. 
  \label{def:KT2QYT}
\end{definition}

\begin{figure}[ht]
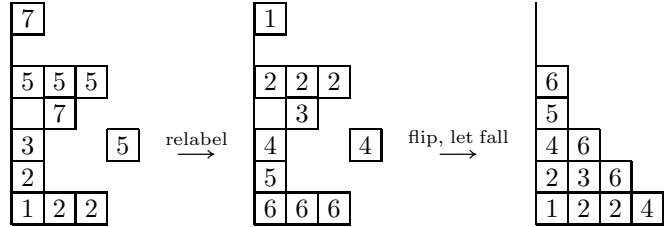

  \begin{center}
    \begin{displaymath}
      \vline\tableau{ 7 \\ \\ 5 & 5 & 5 \\ & 7 \\ 3 & & & 5 \\ 2 \\ 1 & 2 & 2 }
      \hspace{1em} \raisebox{-4\cellsize}{$\stackrel{\mathrm{relabel}}{\longrightarrow}$} \hspace{1em}
      \vline\tableau{ 1 \\ \\ 2 & 2 & 2 \\ & 3 \\ 4 & & & 4 \\ 5 \\ 6 & 6 & 6 }
      \hspace{1em} \raisebox{-4\cellsize}{$\stackrel{\mathrm{flip, \,\, let \,\, fall}}{\longrightarrow}$} \hspace{1em}
      \vline\tableau{ \\ \\ 6 \\ 5 \\ 4 & 6 \\ 2 & 3 & 6 \\ 1 & 2 & 2 & 4 }
    \end{displaymath}
    \caption{\label{fig:bijection}An example of the injective map $\varphi: \QKT(a) \rightarrow \SSYT_n(\sort(a))$.}
  \end{center}
\end{figure}

Any Kohnert tableau of content $a$ fills a partition of shape $\sort(a)$ when the diagram is flipped and cells fall down. It is clear from the definition of $\varphi$ that entries will increase up columns, and from Lemma~\ref{lem:diagram} that entries of $\varphi(T)$ weakly increase along rows. Therefore $\varphi(T) \in \SSYT_n(\sort(a))$.

\begin{theorem}
  The map $\varphi$ is an injection from $\QKT(a)$ to $\QYT_n(\sort(a))$, such that $\flatten(\wt(T))$ is the reverse of $\wt(\varphi(T))$. Moreover, $\varphi$ is surjective if and only if the fundamental slide expansion of $\key_a$ is stable.
  \label{thm:KT2QYT}
\end{theorem}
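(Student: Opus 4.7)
First, I would verify that $\varphi$ sends $\QKT(a)$ into $\SSYT_n(\sort(a))$ with the stated weight identity. Kohnert condition~(i) forces column $j$ of any $T\in\QKT(a)$ to have $|\{i:a_i\geq j\}|$ cells, which is the conjugate of $\sort(a)$; the flip-and-fall preserves column multisets, so $\varphi(T)$ has shape $\sort(a)$. The columns of $\varphi(T)$ strictly increase upward since the relabeling assigns distinct labels to distinct nonempty rows of $T$ with higher rows getting smaller labels, and Lemma~\ref{lem:diagram} ensures the $r$-th highest cell of column $j$ of $T$ is weakly above the $r$-th highest of column $j+1$, so labels weakly increase along rows. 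The weight identity follows immediately by counting label-$i$ cells. For the $\QYT$ property, let $h_1>h_2>\cdots>h_k$ denote the heights of the nonempty rows of $T$ and apply the QY condition to row $h_i$: case~(2) forces $h_{i-1}=h_i+1$ and places some cell of row $h_{i-1}$ weakly right of a cell of row $h_i$, giving the SSYT QY condition on labels $i$ and $i-1$; case~(1) combined with Kohnert conditions~(i) and~(iii) puts an entry $h_i$ in column $1$ of row $h_i$, so the leftmost label $i$ of $\varphi(T)$ sits in column $1$, trivially weakly left of any label $i-1$.

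For injectivity, the label-$i$ cells of $U=\varphi(T)$ occupy the columns of the $i$-th nonempty row from the top of $T$, so that row's column set $R_i$ is recovered from $U$ and only the heights $h_i$ remain to be reconstructed. Suppose $T,T'\in\QKT(a)$ share row-from-top data but differ in heights, and let $i$ be the smallest index with, say, $h_i>h'_i$. Then rows $h'_i+1,\ldots,h_{i-1}-1$ are empty in $T'$, so the QY condition at row $h'_i$ in $T'$ must hold by~(1), forcing entry $h'_i$ in column $1$ of row $h'_i$ by Kohnert~(i) and~(iii). Running the labeling algorithm of Definition~\ref{def:labelling_algorithm} on the diagram of $T'$ with content $a$, the greedy choice in each column depends on the heights of its cells and on the labels already assigned to the column immediately to the right; since the two diagrams agree through the first $i-1$ rows but differ at the $i$-th, consistency with content $a$ together with the columns to the right forces the label in row $h'_i$ of column $1$ of $T'$ to differ from $h'_i$, contradicting QY. Hence heights are uniquely determined by $U$ and $a$, and Theorem~\ref{thm:kohnert} then forces the full labeling, giving $T=T'$.

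Finally, Theorems~\ref{thm:key-slide} and~\ref{thm:stable-slide} together show that the stable limit of the fundamental slide expansion of $\key_{0^m\times a}$ is $\sum_{T\in\QKT(a)}F_{\flatten(\wt(T))}$; this limit must equal $s_{\sort(a)}$, which by Proposition~\ref{prop:schur-F} has exactly $|\QYT_n(\sort(a))|$ terms in its fundamental quasisymmetric expansion. Stability of $\key_a$'s fundamental slide expansion is precisely the statement that its number of terms already equals $|\QYT_n(\sort(a))|$, which by injectivity is equivalent to $\varphi$ being surjective. The hard step will be the injectivity argument above, where the Kohnert labeling algorithm's response to different height assignments must be tracked carefully to rule out all but one QY reconstruction for a given row-from-top structure.
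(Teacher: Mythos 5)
Your first two steps (well-definedness of $\varphi$ into $\SSYT_n(\sort(a))$, preservation of the quasi-Yamanouchi property, and the weight-reversal identity) are correct and essentially the paper's argument. The problems are in the last two steps.

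For injectivity, you correctly reduce to showing that the row heights $h_1>\cdots>h_k$ are determined by the row-from-top column data, and you correctly locate the tension at the highest index $i$ where two candidates disagree (the lower copy must satisfy quasi-Yamanouchi via clause (1) because the row above it is empty). But the decisive claim --- that ``consistency with content $a$ together with the columns to the right forces the label in row $h'_i$ of column $1$ of $T'$ to differ from $h'_i$'' --- is asserted, not proved, and the mechanism you gesture at is not the one that works. What is actually true is that the labeling algorithm of Definition~\ref{def:labelling_algorithm} assigns the \emph{same} label to corresponding cells of $T$ and $T'$: the greedy test ``the $i$ in column $j+1$ is weakly lower'' depends only on the relative vertical order of the occupied rows, which the two diagrams share. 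The contradiction then comes not from the label changing but from Kohnert condition (ii) applied in $T$: the cell of $T'$ in row $h'_i$ with entry $h'_i$ corresponds to a cell of $T$ in row $h_i>h'_i$ with the same entry $h'_i<h_i$, which is forbidden. (You also need to handle the case where row $h'_i$ has no cell in column $1$, or $a_{h'_i}=0$.) The paper avoids all of this by observing that $\varphi(S)=\varphi(T)$ forces the diagrams to differ only by vertical translations of whole rows, and that no nontrivial such translation of a quasi-Yamanouchi tableau is quasi-Yamanouchi.

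The surjectivity-iff-stability step is circular as written. You identify the stable limit of $\key_{0^m\times a}$ with $\sum_{T\in\QKT(a)}F_{\flatten(\wt(T))}$, but that identification presupposes that the slide expansion is already stable at $a$ (in general $\#\QKT(0^m\times a)$ grows with $m$; e.g.\ $\#\QKT(0,3,2)=4$ while $\#\QKT(0,0,3,2)=5$). You then invoke $\lim_m\key_{0^m\times a}=s_{\sort(a)}$, which in this paper is Corollary~\ref{cor:stable-key}, itself deduced \emph{from} Theorem~\ref{thm:KT2QYT}; and the equivalence ``stable $\iff$ the term count equals $\#\QYT_n(\sort(a))$'' quietly uses the monotonicity established only later in Theorem~\ref{thm:monotone}. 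The paper's proof instead constructs an explicit left inverse of $\varphi$ (flip $S$, drop the cells column by column to fixed target rows, relabel via Definition~\ref{def:labelling_algorithm}, and push rows down until condition (ii) holds) and characterizes surjectivity by whether this procedure ever produces a cell below the $x$-axis, which is exactly the stability condition. You should either adopt that construction or restructure your counting argument so it does not rely on downstream corollaries.
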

\begin{proof}
 Consider the leftmost cell $C$ of $T$ that is relabeled with $i>1$ by $\varphi$. Since $i>1$, there is some cell in $T$ in a row strictly above the row of $C$ that is relabeled with $i-1$. If $C$ is in the first column of $T$ then necessarily $C$ is weakly left of this cell. Otherwise, since $T$ is quasi-Yamanouchi, there is a cell weakly to the right of $C$ in the row immediately above the row of $C$, and cells in this row are relabeled $i-1$ by $\varphi$. Hence a cell relabeled $i-1$ is weakly right of a cell relabeled $i$, and since cells remain in the same column when $\varphi$ is applied, $\varphi(T)$ is quasi-Yamanouchi.
 
 By definition, $\flatten(\wt(T))_i$ is the number of cells of $T$ in the $i$th nonempty row from the bottom. On the other hand, $\varphi$ replaces all entries of $T$ in the $i$th nonempty row from the top with $i$, hence $\wt(\varphi(T))_i$ is the number of cells of $T$ in the $i$th nonempty row from the top. Thus $\flatten(\wt(T))$ is the reverse of $\wt(\varphi(T))$. 
 
 For injectivity, suppose $\varphi(S)=\varphi(T)$. Then all cells of $\varphi(T)$ labeled $i$ must be in the $i$th nonempty row from the top of $S$, with column positions determined by their column positions in $\varphi(T)$. Thus $S$ can differ from $T$ only by translations of occupied rows upwards or downwards, not allowing rows to cross one another. But if $T$ is quasi-Yamanouchi, then any (non-identity) translation of the rows of $T$ is clearly not quasi-Yamanouchi.
  
  Finally, define a left inverse of $\varphi$ as follows. Let $S\in \QYT_n(\sort(a))$, and let $\ell$ be the index of the rightmost nonzero entry of $a$. Flip $S$ upside down, and move it up so the cells labeled $1$ are in row $\ell$. Then move cells down columns so that the cells labeled $2$ are in row $\ell -1$, and so on. Then delete all the labels, and apply the labeling algorithm of Definition~\ref{def:labelling_algorithm}. Finally, row by row starting from the top, if (ii) is violated by a row, push that row (and the rows below) downwards until that row satisfies (ii). This left inverse is an actual inverse exactly when applying this map to every $S\in \QYT_n(\sort(a))$ yields a diagram that does not go below the $x$-axis, which happens exactly when the fundamental expansion of $\key_a$ is stable. 
 \end{proof}

For example, the $4$ elements of $\QKT(0,3,2)$ shown in Figure~\ref{fig:qYam_kohnert_tableaux} map to the first $4$ elements of $\QYT(3,2)$ shown in Figure~\ref{fig:qYam_tableaux}, and the $5$ elements of $\QKT(0,0,3,2)$ shown in Figure~\ref{fig:qYam_kohnert_stable} map bijectively to all $5$ elements of $\QYT(3,2)$ shown in Figure~\ref{fig:qYam_tableaux}. 

\begin{corollary}
  For any weak composition $a$ and for any $m \geq |a|$, we have
  \begin{equation}
    \key_{0^m \times a} = \sum_{b} [ \Fund_b \mid \key_{0^{|a|} \times a} ] \Fund_{0^{m-|a|} \times b},
  \end{equation}
  where $[ \Fund_b \mid \key_c ]$ denotes the coefficient of $\Fund_a$ in the fundamental slide expansion of $\key_c$. In particular, for $\lambda = \sort(a)$, we have 
  \begin{equation}
    s_{\lambda}(X) = \sum_{b} [ \Fund_b \mid \key_{0^{|a|} \times a} ] F_{\flatten(b)}(X).
  \end{equation}
  \label{cor:stable-F}
\end{corollary}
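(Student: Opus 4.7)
The plan is to combine Theorem~\ref{thm:key-slide}, Theorem~\ref{thm:stable-slide}, Theorem~\ref{thm:KT2QYT}, and Proposition~\ref{prop:schur-F}, with the key technical input being that the fundamental slide expansion of $\key_{0^m \times a}$ has already stabilized once $m = |a|$.

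I will first establish the \emph{stabilization lemma}: for any $m \geq |a|$, every $T \in \QKT(0^m \times a)$ has an empty first row. Suppose row $i$ is nonempty in $T$ with $1 \leq i \leq m$. The labels of $T$ come from $\{m+1,\ldots,m+\ell(a)\}$, so no cell carries label $i$ and quasi-Yamanouchi condition (1) must fail on row $i$; hence condition (2) holds, forcing row $i+1$ to be nonempty. Iterating, rows $i, i+1, \ldots, m+1$ are all nonempty, giving at least $m+2-i$ nonempty rows. But $T$ has only $|a|$ cells, so $m+2-i \leq |a|$, that is $i \geq m+2-|a| \geq 2$. In particular row 1 is empty.

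From the lemma, the map that shifts every cell down by one row and decrements every label by one is a weight-shifting bijection $\QKT(0^m \times a) \to \QKT(0^{m-1} \times a)$ for all $m \geq |a|$; the Kohnert tableau axioms and quasi-Yamanouchi condition are routinely checked to be preserved in both directions, and $\wt(T) = (0) \times \wt(\mathrm{shift}(T))$. Iterating yields a bijection with $\QKT(0^{|a|} \times a)$ whose weights differ by prepending $m - |a|$ zeros. Substituting into Theorem~\ref{thm:key-slide} gives the first equation of the corollary.

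For the Schur identity, applying Theorem~\ref{thm:stable-slide} termwise gives $\lim_{m\to\infty} \key_{0^m \times a} = \sum_b c_b F_{\flatten(b)}(X)$. Since stability holds at $m=|a|$, Theorem~\ref{thm:KT2QYT} says $\varphi : \QKT(0^{|a|} \times a) \to \QYT_n(\sort(a))$ is a bijection with $\flatten(\wt(T)) = \rev(\wt(\varphi(T)))$, so the limit sum rewrites as $\sum_{S \in \QYT_n(\sort(a))} F_{\rev(\wt(S))}(X)$. The multiset $\{\wt(S) : S \in \QYT_n(\sort(a))\}$ is closed under reversal, since the Schützenberger involution on $\SYT(\sort(a))$ reverses descent compositions and transports to an involution on $\QYT_n(\sort(a))$ through the weight-to-descent bijection of \cite{AS17}. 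Hence the sum equals $\sum_S F_{\wt(S)}(X) = s_{\sort(a)}(X)$ by Proposition~\ref{prop:schur-F}. The main obstacle throughout is the stabilization lemma, which simultaneously unlocks the bijection $\QKT(0^m \times a) \cong \QKT(0^{|a|} \times a)$ and the surjectivity of $\varphi$ needed to apply Theorem~\ref{thm:KT2QYT}.
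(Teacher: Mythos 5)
Your proposal is correct, and its central device — the \emph{stabilization lemma} proved via the quasi-Yamanouchi condition — is a genuinely different and more self-contained route than the paper takes. The paper gives no explicit proof of this corollary: it is meant to follow from Theorem~\ref{thm:KT2QYT} (the injection $\varphi$ into $\QYT_n(\sort(a))$ and its surjectivity criterion), with the verification that $m=|a|$ suffices for stability deferred, in effect, to the later machinery of Section 5 (the statistic $\eta(a)$, which one then checks satisfies $\eta(a)\le |a|$). You instead prove directly that for $m\ge |a|$ every $T\in\QKT(0^m\times a)$ has empty first row — since all labels exceed $m$, quasi-Yamanouchi condition (1) fails on every row $i\le m$, so condition (2) propagates nonemptiness upward through rows $i,\dots,m+1$, and counting cells forces $i\ge m+2-|a|\ge 2$ — and then the shift-down/shift-up bijections $\QKT(0^m\times a)\cong\QKT(0^{|a|}\times a)$ yield the first identity immediately from Theorem~\ref{thm:key-slide}. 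This buys a short, elementary proof of the first equation that does not rely on the $\eta$-analysis, and it also supplies the precise sense in which ``stability holds at $m=|a|$'' needed to invoke the surjectivity direction of Theorem~\ref{thm:KT2QYT}. (The parenthetical claim that the labels form the interval $\{m+1,\dots,m+\ell(a)\}$ is not quite right — they are $\{m+j : a_j>0\}$ — but all you use is that every label exceeds $m$, which holds.) For the Schur identity your argument coincides with the paper's intended one (pass to the limit via Theorem~\ref{thm:stable-slide}, use the bijection $\varphi$, and absorb the weight reversal); where the paper simply appeals to ``the symmetry of Schur functions,'' you make the reversal-closure of $\{\wt(S): S\in\QYT_n(\sort(a))\}$ explicit via the evacuation involution on standard tableaux, which is a correct and welcome refinement.
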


Note that, while the map from $\QKT(a)$ to $\QYT_n(\sort(a))$ is weight-reversing, the symmetry of Schur functions ensures the following, which was known to Lascoux and Sch{\"u}tzenberger, though our proof via fundamental slide polynomials is new.

\begin{corollary}
  For a weak composition $a$, we have
  \begin{equation}
    \lim_{m\rightarrow\infty}\key_{0^m \times a} = s_{\sort(a)}(X).
  \end{equation}
  \label{cor:stable-key}
\end{corollary}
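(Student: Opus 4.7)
The plan is to deduce this directly from the two preceding results: Corollary~\ref{cor:stable-F} and Theorem~\ref{thm:stable-slide}. The essential work has already been done in Theorem~\ref{thm:KT2QYT}, which establishes that the fundamental slide expansion of $\key_{0^m\times a}$ stabilizes---in both the set of indices and their multiplicities---once $m\ge|a|$. Given that stabilization, the present corollary reduces to applying the already-established stable limit of the fundamental slide basis and matching coefficients against the known fundamental expansion of a Schur function.

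In detail, I would first invoke the first identity of Corollary~\ref{cor:stable-F}: for every $m \geq |a|$,
$$
\key_{0^m \times a} \;=\; \sum_b\,[\Fund_b \mid \key_{0^{|a|}\times a}]\;\Fund_{0^{m-|a|}\times b},
$$
where the sum ranges over a \emph{finite} set of weak compositions $b$ that is independent of $m$, with coefficients also independent of $m$. Since the sum is finite and its coefficients are fixed, the limit as $m\to\infty$ may be exchanged with the summation. Applying Theorem~\ref{thm:stable-slide} to each term $\Fund_{0^{m-|a|}\times b}$ yields
$$
\lim_{m\to\infty}\key_{0^m \times a} \;=\; \sum_b\,[\Fund_b \mid \key_{0^{|a|}\times a}]\;F_{\flatten(b)}(X).
$$

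Finally, I would invoke the second identity of Corollary~\ref{cor:stable-F}, which asserts precisely that this quasisymmetric sum equals the Schur function $s_{\sort(a)}(X)$. Chaining the two equalities gives the desired identity $\lim_{m\to\infty}\key_{0^m\times a} = s_{\sort(a)}(X)$.

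There is no serious obstacle here; the substantive content---that the limit of a generally non-symmetric key polynomial in ever-more variables collapses to a symmetric function, and specifically to the Schur function indexed by $\sort(a)$---was already absorbed into Theorem~\ref{thm:KT2QYT} (via the bijection $\varphi$ between quasi-Yamanouchi Kohnert tableaux and quasi-Yamanouchi semistandard Young tableaux once $m$ is large enough) and into the second identity of Corollary~\ref{cor:stable-F} (which matches $F$-coefficients on both sides). The only mild point is justifying the interchange of limit and sum, which is immediate from the finiteness and $m$-independence of the expansion.
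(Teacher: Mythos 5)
Your proposal is correct and matches the paper's (implicit) argument: the corollary is deduced by combining the first identity of Corollary~\ref{cor:stable-F} with Theorem~\ref{thm:stable-slide} term by term over the finite, $m$-independent expansion, and then identifying the resulting quasisymmetric sum with $s_{\sort(a)}(X)$ via the second identity of Corollary~\ref{cor:stable-F}. The only subtlety the paper flags---that the bijection $\varphi$ is weight-reversing, so the symmetry of the Schur function is needed to match descent compositions---is already absorbed into that second identity, which you correctly cite.
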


For example, from Figures~\ref{fig:qYam_kohnert_stable} and \ref{fig:qYam_tableaux}, we compute
\begin{eqnarray*}
  \key_{32} & = & \Fund_{32} \\
  \key_{032} & = & \Fund_{032} + \Fund_{221} + \Fund_{131} + \Fund_{230} \\
  \key_{0032} & = & \Fund_{0032} + \Fund_{0221} + \Fund_{0131} + \Fund_{0230} + \Fund_{1220} \\
  & \vdots & \\
  s_{32}(X) & = &  F_{23}(X) + F_{122}(X) + F_{131}(X) + F_{32}(X) + F_{221}(X).
\end{eqnarray*}

\subsection{Quasi-key polynomials stabilize to quasi-Schur functions}
\label{sec:quasi-stable}

By Theorem~\ref{thm:key-qkey}, we have yet another corollary to Theorem~\ref{thm:KT2QYT}.

\begin{theorem}
  For any weak composition $a$ and for any $m \geq \eta(a)$, we have
  \begin{equation}
    \qkey_{0^m \times a} = \sum_{b} [ \Fund_b \mid \qkey_{0^{\eta} \times a} ] \Fund_{0^{m-\eta} \times b},
  \end{equation}
  where $[ \Fund_b \mid \qkey_c ]$ denotes the coefficient of $\Fund_a$ in the fundamental slide expansion of $\qkey_c$. In particular, the stable limit of a quasi-key polynomial is well-defined.
\end{theorem}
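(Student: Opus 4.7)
The plan is to adapt the stability argument for key polynomials (Corollary~\ref{cor:stable-F}) to the quasi-key setting, working directly with quasi-Yamanouchi quasi-Kohnert tableaux via Theorem~\ref{thm:quasi-key}. First I would write
\[
\qkey_{0^m \times a} = \sum_{T \in \QqKT(0^m \times a)} \Fund_{\wt(T)},
\]
so that the task reduces to showing the multiset of weights of $\QqKT(0^m \times a)$ stabilizes, in the sense of prepending zeros to both the weight and the content, once $m \geq \eta(a)$.

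Next I would construct a row-shift map $\Phi_k \colon \QqKT(0^\eta \times a) \to \QqKT(0^{\eta+k} \times a)$ that translates every cell upward by $k$ rows and increments each label by $k$. A routine verification shows that Kohnert tableau conditions (i)--(iv), quasi-Kohnert conditions (i) and (ii), and the quasi-Yamanouchi property are all invariant under this translation, since each depends only on the relative vertical and horizontal positions of cells and on the relationship between a label and its row. The weight transforms as $\wt(\Phi_k(T)) = 0^k \times \wt(T)$, and $\Phi_k$ is patently injective. Combined with Definition~\ref{def:fundamental-shift}, a bijection here yields the claimed equality of coefficients.

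The crux is to show $\Phi_{m-\eta}$ is surjective for every $m \geq \eta(a)$. The threshold $\eta(a)$ should be characterized as the smallest number of prepended zeros such that no element of $\QqKT(0^{\eta(a)} \times a)$ has its lowest occupied row equal to row $1$; equivalently, it bounds how far below its Yamanouchi position a cell in any quasi-Yamanouchi quasi-Kohnert tableau can descend, taking into account the extra restriction that the leftmost column be strictly increasing and that adjacent-column labels $i<j$ with $i$ weakly above $j$ require $a_i \ge a_j$. Given any $S \in \QqKT(0^m \times a)$ with $m \geq \eta(a)$, one shows its lowest occupied row is at least $m-\eta+1$, so shifting it down by $m-\eta$ yields a well-defined preimage in $\QqKT(0^\eta \times a)$. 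The stable limit then follows by combining the identity with Theorem~\ref{thm:stable-slide}.

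The main obstacle is the surjectivity argument: one must rule out that a genuinely new tableau (not the shift of anything in $\QqKT(0^\eta \times a)$) can appear at the bottom once enough zeros are prepended. This requires a careful joint analysis of the Kohnert, quasi-Kohnert, and quasi-Yamanouchi constraints to pin down exactly how low cells can sit, and it is here that the correct choice of $\eta(a)$ must be made explicit. This is somewhat more delicate than the analogous analysis for key polynomials, since the quasi-Kohnert conditions forbid certain low configurations that are otherwise available, so one cannot simply reuse the bound $|a|$ from Corollary~\ref{cor:stable-F} without sharpening it.
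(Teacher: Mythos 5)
Your reduction to a shift bijection on quasi-Yamanouchi quasi-Kohnert tableaux is the right skeleton---injectivity of the up-shift and translation-covariance of the Kohnert, quasi-Kohnert, and quasi-Yamanouchi conditions are indeed routine---but the proposal stops at the only step that carries content. You assert that every $S \in \QqKT(0^m \times a)$ has lowest occupied row at least $m-\eta(a)+1$, and then in your closing paragraph concede that establishing this (and even pinning down the correct $\eta(a)$) is an obstacle you have not resolved. That is the whole theorem: without it you have only an injection $\QqKT(0^{\eta}\times a)\hookrightarrow \QqKT(0^m\times a)$, which gives an inequality of coefficients rather than the claimed equality. Your tentative characterization of $\eta(a)$ in terms of $\QqKT$ is also not the one the theorem uses: $\eta(a)$ is the explicit statistic \eqref{e:eta}, defined and analyzed via quasi-Yamanouchi \emph{Kohnert} tableaux in Theorem~\ref{thm:lowest}, and the true stabilization point of $\qkey_{0^m\times a}$ could a priori be earlier than $\eta(a)$; the theorem only claims that $m\ge\eta(a)$ suffices.

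The missing idea---and the paper's actual route---is that no fresh depth analysis of the quasi-Kohnert conditions is needed. By Theorem~\ref{thm:key-qkey}, the thread map $\theta$ is a diagram-preserving bijection from $\QKT(0^m\times a)$ onto $\bigcup_{b\in\Qlswap(0^m\times a)}\QqKT(b)$, and $0^m\times a$ itself lies in $\Qlswap(0^m\times a)$; hence every diagram underlying an element of $\QqKT(0^m\times a)$ already underlies an element of $\QKT(0^m\times a)$. The lowest-row bound of Theorem~\ref{thm:lowest} (established through Theorem~\ref{thm:KT2QYT} and Section~\ref{app:stable}) therefore applies verbatim to $\QqKT(0^m\times a)$, and surjectivity of your shift map follows because the thread decomposition is translation-equivariant, so the down-shift of $S$ lands in the piece indexed by $0^{\eta}\times a$. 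Note finally that your worry that the quasi-Kohnert conditions make the bound \emph{more} delicate is backwards: those conditions only remove tableaux, so the upper bound on depth is inherited for free from the key-polynomial case; only sharpness could fail, and sharpness is not needed here.
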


For example, from Figure~\ref{fig:qYam_qkohnert-2}, we can compute that for for $m \geq 2$ we have
\begin{eqnarray*}
  \qkey_{232} & = & \Fund_{232} \\
  \qkey_{0232} & = & \Fund_{0232} + \Fund_{1222} + \Fund_{2221} + \Fund_{1231} + \Fund_{2131} \\
  \qkey_{00232} & = & \Fund_{00232} + \Fund_{01222} + \Fund_{02221} + \Fund_{01231} + \Fund_{02131} + \Fund_{12121} + \Fund_{11221} \\
  & \vdots & \\
  \qkey_{0^m232} & = & \Fund_{0^{m-2}232} + \Fund_{0^{m-1}1222} + \Fund_{0^{m-1}2221} + \Fund_{0^{m-1}1231} + \Fund_{0^{m-1}2131} + \Fund_{0^{m}12121} + \Fund_{0^{m}11221} 
\end{eqnarray*}

\begin{figure}[ht]
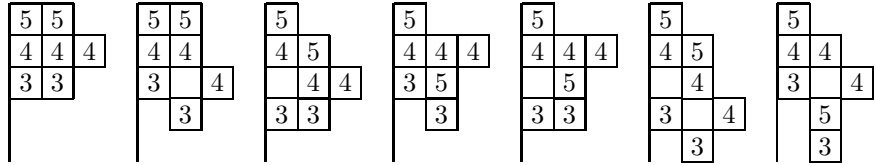

  \begin{center}
    \begin{displaymath}
      \begin{array}{c@{\hskip\cellsize}c@{\hskip\cellsize}c@{\hskip\cellsize}c@{\hskip\cellsize}c@{\hskip\cellsize}c@{\hskip\cellsize}c}
        \vline\tableau{5 & 5 \\ 4 & 4 & 4 \\ 3 & 3 \\ } &
        \vline\tableau{5 & 5 \\ 4 & 4 \\ 3 & & 4 \\ & 3} &
        \vline\tableau{5 \\ 4 & 5 \\ & 4 & 4 \\ 3 & 3} &
        \vline\tableau{5 \\ 4 & 4 & 4 \\ 3 & 5 \\ & 3} &
        \vline\tableau{5 \\ 4 & 4 & 4 \\ & 5 \\ 3 & 3} &
        \vline\tableau{5 \\ 4 & 5 \\ & 4 \\ 3 & & 4 \\ & 3} &
        \vline\tableau{5 \\ 4 & 4 \\ 3 & & 4 \\ & 5 \\ & 3} 
      \end{array}
    \end{displaymath}
    \caption{\label{fig:qYam_qkohnert-2}The set $\QqKT(0,0,2,3,2)$ of quasi-Yamanouchi quasi-Kohnert tableaux of content $(0, 0, 2, 3, 2)$.}
  \end{center}
\end{figure}

The following is a simple corollary of the analogous result for slide polynomials \cite{AS17}.

\begin{proposition}
  The quasi-key polynomial $\qkey_a$ is quasi-symmetric in $x_1,\ldots,x_k$ if and only if nonzero entries of $a$ occur in an interval ending with $a_k$. 
\end{proposition}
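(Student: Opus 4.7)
The plan is to deduce the proposition from the fundamental slide expansion $\qkey_a = \sum_{T \in \QqKT(a)} \Fund_{\wt(T)}$ (Theorem~\ref{thm:quasi-key}) together with the analogous slide polynomial result from \cite{AS17}, which characterizes the quasi-symmetric-in-$x_1,\ldots,x_k$ slide polynomials $\Fund_b$ by the same shape condition on the weak composition $b$.

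For the forward direction, assuming the hypothesis on $a$, I would show that every $\wt(T)$ for $T\in\QqKT(a)$ inherits the condition, so each summand $\Fund_{\wt(T)}$ is quasi-symmetric in $x_1,\ldots,x_k$ and hence so is $\qkey_a$. The key claim to establish by induction on $r$ is that if some row $r\le k-1$ of $T$ is nonempty then row $r+1$ of $T$ is also nonempty, which forces the nonempty rows of $T$ in $[1,k]$ to form an interval ending at $k$. The quasi-Yamanouchi condition at row $r$ produces two cases: either there is already a cell in row $r+1$ weakly right of a cell in row $r$ (and we are done), or row $r$ contains a label-$r$ cell. In the latter case, $a_r>0$ forces $r$ into the nonzero interval of $a$; the weak-descent property (iii) combined with (ii) pins the column-$1$ label-$r$ cell at row exactly $r$, and then the strict-increase property (i) of quasi-Kohnert tableaux on column $1$ combined with property (ii) forces the column-$1$ label-$(r+1)$ cell to lie in row exactly $r+1$.

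For the reverse direction, I would argue by contrapositive using the upper-unitriangularity of $\qkey_a$ in dominance order from Theorem~\ref{thm:qkey-basis}: the coefficient of $x^a$ in $\qkey_a$ is $1$ and every other monomial $x^b$ appearing in $\qkey_a$ satisfies $b\ge a$. Suppose the hypothesis fails: either (I) the largest position $s$ of a nonzero entry of $a$ in $[1,k]$ is strictly less than $k$, or (II) the nonzero positions of $a$ in $[1,k]$ end at $k$ but contain consecutive nonzero positions $q_m<q_{m+1}$ with $q_{m+1}\ge q_m+2$. In each case I construct a weak composition $c$ with the same composition as $a$ on $[1,k]$ but with one entry relocated: move the entry at $s$ to position $k$ in case (I), or move the entry at $q_m$ to position $q_m+1$ in case (II). A short partial-sum calculation at $\ell=s$ or $\ell=q_m$ shows $c\not\ge a$ in dominance, so $x^c$ does not appear in $\qkey_a$. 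However $x^c$ is by construction a quasi-symmetric partner of $x^a$ in $x_1,\ldots,x_k$, so quasi-symmetry of $\qkey_a$ would force the coefficient of $x^c$ to equal that of $x^a$, namely $1$, yielding a contradiction.

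The main obstacle I expect is the case analysis in the forward direction's label-$r$-in-row-$r$ case, which requires careful bookkeeping of quasi-Kohnert properties (i)--(iii) together with the quasi-Yamanouchi condition to rigidly locate consecutive labels in column $1$ under the interval hypothesis on $a$.
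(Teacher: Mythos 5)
Your proof is correct and follows exactly the route the paper indicates: the paper states this proposition without proof, calling it a simple corollary of the analogous result for slide polynomials in \cite{AS17}, and your argument supplies the details that are omitted. The forward direction via Theorem~\ref{thm:quasi-key} is sound --- in particular your column-$1$ argument (Kohnert conditions (ii) and (iii) pin the column-$1$ cell labeled $r$ in row $r$, and then quasi-Kohnert condition (i) together with (ii) pins the column-$1$ cell labeled $r+1$ in row $r+1$) correctly shows that the nonempty rows of any $T\in\QqKT(a)$ form an interval ending at row $k$, so each $\Fund_{\wt(T)}$ is quasisymmetric in $x_1,\ldots,x_k$. The reverse direction via dominance-order unitriangularity is also valid. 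One small omission: your contrapositive cases (I) and (II) only cover failures of the hypothesis occurring within positions $1,\ldots,k$, but the hypothesis can also fail because $a_j\neq 0$ for some $j>k$ (or vacuously when $a=0$, where the statement is trivial); the former case is immediate since the minimal monomial $x^a$ then involves $x_j$, so $\qkey_a$ is not even a polynomial in $x_1,\ldots,x_k$. With that one-line addition the argument is complete.
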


Moreover, we claim that when nonzero entries of $a$ occur in an interval, the quasi-key polynomial is equal to the quasi-Schur polynomial of Haglund, Luoto, Mason and van Willigenburg \cite{HLMvW11} that we now recall.

For strict compositions $\alpha$ and $\beta$, say $\alpha < \beta$ if $\beta$ results from inserting a leading part equal to $1$ or from adding $1$ to the leftmost occurrence of $i$ in $\alpha$. For example, we have
\begin{displaymath}
  (1) < (1,1) < (2,1) < (2,2) < (1,2,2) < (1,3,2) < (2,3,2).
\end{displaymath}

\begin{definition}\cite{BLvW11} 
  A \emph{standard composition tableau of shape $\alpha$} is a saturated chain $(1) = \alpha^{(1)} < \cdots < \alpha^{(n)} = \alpha$. Denote such a chain by placing $i$ in the cell corresponding to $\alpha^{(n-i+1)} \setminus \alpha^{(n-i)}$. Let $\SCT(\alpha)$ denote the set of standard composition tableaux of shape $\alpha$.
  \label{def:SCT}
\end{definition}

For example, Figure~\ref{fig:SCT} shows the standard composition tableaux of shape $(2,3,2)$. For $T \in \SCT(\alpha)$, the \emph{descent composition of $T$}, denoted by $\Des(T)$, is the strong composition formed by right to left runs when reading the entries of $T$. That is, $i$ is a \emph{descent} of $T$ if $i+1$ lies weakly to its right in $T$. See Figure~\ref{fig:SCT} for examples.

\begin{figure}[ht]
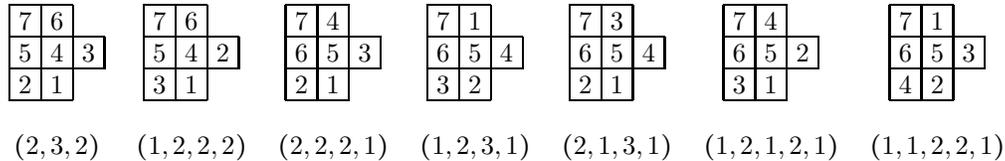

  \begin{center}
    \begin{displaymath}
      \begin{array}{c@{\hskip\cellsize}c@{\hskip\cellsize}c@{\hskip\cellsize}c@{\hskip\cellsize}c@{\hskip\cellsize}c@{\hskip\cellsize}c}
        \tableau{7 & 6 \\ 5 & 4 & 3 \\ 2 & 1} & 
        \tableau{7 & 6 \\ 5 & 4 & 2 \\ 3 & 1} & 
        \tableau{7 & 4 \\ 6 & 5 & 3 \\ 2 & 1} & 
        \tableau{7 & 1 \\ 6 & 5 & 4 \\ 3 & 2} & 
        \tableau{7 & 3 \\ 6 & 5 & 4 \\ 2 & 1} & 
        \tableau{7 & 4 \\ 6 & 5 & 2 \\ 3 & 1} & 
        \tableau{7 & 1 \\ 6 & 5 & 3 \\ 4 & 2} \\ \\
        (2,3,2) &
        (1,2,2,2) &
        (2,2,2,1) &
        (1,2,3,1) &
        (2,1,3,1) &
        (1,2,1,2,1) &
        (1,1,2,2,1)
      \end{array}
    \end{displaymath}
    \caption{\label{fig:SCT}The set $\SCT(2,3,2)$ of standard composition tableaux of shape $(2, 3, 2)$ and their descent compositions.}
  \end{center}
\end{figure}

\begin{definition}\cite{HLMvW11} 
  The \emph{quasi-Schur function for $\alpha$}, denoted by $QS_{\alpha}$, is given by
  \begin{equation}
    QS_{\alpha}(X) = \sum_{T \in \SCT(\alpha)} F_{\Des(T)}(X),
  \end{equation}
  where $F_{\beta}$ denotes the fundamental quasi-symmetric function.
  \label{def:quasi-schur}
\end{definition}

For example, from Figure~\ref{fig:SCT} we compute
\begin{displaymath}
  QS_{(2,3,2)} = F_{(2,3,2)} + F_{(1,2,2,2)} + F_{(2,2,2,1)} + F_{(1,2,3,1)} + F_{(2,1,3,1)} + F_{(1,2,1,2,1)} + F_{(1,1,2,2,1)}.
\end{displaymath}
Comparing this with the fundamental slide expansion for $\qkey_{(0,0,2,3,2)}$, one can easily (and correctly) anticipate Theorems~\ref{thm:interval-qkey} and \ref{thm:stable-qkey}.

\begin{definition}
  Given a weak composition $a$ of length $n$, define a map $\psi: \QqKT(a) \rightarrow \mathrm{Tab}_n(\flatten(a))$ according to the following procedure. If the parts of $a$ add to $m$, then reading left to right, top to bottom, place $m, m-1,\ldots, 1$ into the cells of $T$. Find the highest and then leftmost cell that is not left-justified in its row and move it up to the nearest available space. Repeat until all rows are left-justified and then delete empty rows.
  \label{def:qKT2QCT}
\end{definition}

For example, the seven elements of $\QqKT(0,0,2,3,2)$ in Figure~\ref{fig:qYam_qkohnert-2} map to the seven elements of $\SCT(2,3,2)$ in Figure~\ref{fig:SCT}, respectively.

\begin{lemma}
  The map $\psi$ is well-defined and satisfies the following:
  \begin{enumerate}
  \item for $T\in \QqKT(a)$, $\psi(T) \in \SCT(\flatten(a))$;
  \item for $T\in \QqKT(a)$, $\Des(\psi(T)) = \flatten(\wt(T))$;
  \item $\psi$ is injective;
  \item $\psi$ is surjective if and only if the fundamental slide expansion of $\qkey_a$ is stable.
  \end{enumerate}
  \label{lem:qKT2QCT}
\end{lemma}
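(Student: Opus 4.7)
The map $\psi$ factors as (i) placing entries $m, m-1, \ldots, 1$ in the cells of $T$ in row-major order (left to right within each row, top to bottom between rows), followed by (ii) iteratively raising the highest-then-leftmost non-left-justified cell to the nearest empty position above, which is the same left-justification procedure used in the proof of Lemma~\ref{lem:fail-key}. The plan is to first observe that step (ii) preserves the column of every cell and terminates at a key diagram. Since $T \in \qKT(a) \subseteq \KT(a)$ has column heights $|\{i \mid a_i \geq k\}|$ in column $k$, and these match the column heights of $D_{\flatten(a)}$, the terminal underlying shape of $\psi(T)$ is $D_{\flatten(a)}$, so the shape of $\psi(T)$ is $\flatten(a)$.

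To verify the saturated chain condition in claim (1), I would argue by induction on $k$ that the cells of $\psi(T)$ carrying the top $k$ labels $\{m, m-1, \ldots, m-k+1\}$ form the key diagram of a strict composition $\alpha^{(k)}$ (after suppressing empty rows), and that $\alpha^{(k-1)} < \alpha^{(k)}$. The key structural observation is that the row-major labeling together with the column-preserving raising sends the top $k$ labels to a left-justified, upward-closed sub-diagram at every stage. The increment $\alpha^{(k)} \setminus \alpha^{(k-1)}$ is then either the leftmost cell of a newly-introduced row below (prepending a leading part of length $1$) or the next cell of the most recently extended row (extending the leftmost part of some length $i$).

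For claim (2), I would group labels by row of $T$: the $j$-th nonempty row from the top of $T$ receives a consecutive decreasing range $R_j$, and the $R_j$ partition $\{1, \ldots, m\}$ into contiguous blocks. Within each $R_j$, consecutive labels $i+1, i$ lie in the same row of $T$ with $i$ in a strictly larger column than $i+1$; column-preservation of the raising keeps this relation in $\psi(T)$, so no descent arises inside $R_j$. Conversely, at each boundary where $R_j$ ends and $R_{j+1}$ begins, the leftmost cell of $R_{j+1}$ ends up weakly right of the rightmost cell of $R_j$ in $\psi(T)$, producing a descent. Consequently $\Des(\psi(T))$ is the sequence of block sizes $(|R_1|, |R_2|, \ldots)$, which equals $\flatten(\wt(T))$.

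For claim (3), I would argue injectivity by running $\psi$ in reverse: the descents of $\psi(T)$ recover the block partition $R_1, R_2, \ldots$, and the quasi-Yamanouchi condition on $T$ forces each $R_j$'s row of $T$ to be the lowest row compatible with this data, reconstructing $T$ uniquely. For claim (4), note that $\qkey_a = \sum_{T \in \QqKT(a)} \Fund_{\wt(T)}$ by Theorem~\ref{thm:quasi-key}, while $QS_{\flatten(a)} = \sum_{S \in \SCT(\flatten(a))} F_{\Des(S)}$ by definition, so the weight-preserving injection $\psi$ is surjective exactly when these two sums have the same number of terms, which happens precisely when the fundamental slide expansion of $\qkey_a$ has stabilized to the fundamental quasisymmetric expansion of $QS_{\flatten(a)}$. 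The main obstacle I anticipate is the technical core of claim (2) at the block-boundary step: since rows of $T$ may have column-gaps, cells from a single row $R_j$ can land in different rows of $\psi(T)$ after raising, so verifying that the required column orderings persist within $R_j$ and that a descent always occurs at each $R_j/R_{j+1}$ boundary will require careful case analysis leveraging the quasi-Kohnert conditions in Definition~\ref{def:quasi-Kohnert} together with the quasi-Yamanouchi condition.
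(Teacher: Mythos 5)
Your overall architecture (column-preserving raising, descents read off from columns, blocks $R_j$ recording the rows of $T$) matches the paper's proof for claims (2) and (3), but there are two genuine gaps. The first is in claim (1): you deduce that the terminal shape is $\flatten(a)$ from the fact that the column heights of $T$ match those of $D_{\flatten(a)}$. Column heights only determine the \emph{multiset} of row lengths of the resulting key diagram, not their vertical order, and it is the order (read bottom to top after suppressing empty rows) that defines the shape of $\psi(T)$. For instance, the Kohnert tableau of content $(0,3,2)$ with entries $3,3$ in row $1$ and $2,2,2$ in row $2$ is already left-justified and would be assigned shape $(2,3)\neq\flatten(0,3,2)$; it is excluded only because it violates the quasi-Kohnert conditions, which your shape argument never invokes. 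The paper closes exactly this gap by observing that the raising procedure produces the dominance-greatest key diagram above $T$ and then using the thread map together with quasi-Kohnert condition (ii) to show that this key diagram has its row lengths in the same relative order as those of $D_a$. Your induction establishing the saturated chain condition would, if completed, also pin down the shape, but as sketched it does not explain why the cell added at each step lands in the \emph{lowest} available row of the appropriate length (equivalently, increments the \emph{leftmost} occurrence of a part), which is again where quasi-Kohnertness must be used; the paper handles this by tracking that cells are raised in decreasing order of label, so a rising cell never jumps over a smaller-labeled cell.

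The second gap is in claim (4). Your argument reduces surjectivity to the statement that the number of terms in the fundamental slide expansion of $\qkey_a$ equals $\#\SCT(\flatten(a))$ precisely when that expansion ``has stabilized to the fundamental quasisymmetric expansion of $QS_{\flatten(a)}$.'' But the fact that the stable limit of $\qkey_{0^m\times a}$ has fundamental expansion indexed by $\SCT(\flatten(a))$ is Theorem~\ref{thm:stable-qkey}, which is deduced \emph{from} this lemma; as written, the direction ``stable $\Rightarrow$ surjective'' is circular. Even granting monotonicity of the term count in $m$ and the upper bound coming from injectivity of $\psi$, you still must exhibit a preimage of every $S\in\SCT(\flatten(a))$ once stability is reached. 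The paper does this by constructing an explicit reverse map (pushing the cells of $S$ down by Kohnert moves in label order, relabeling, and destandardizing), which always produces a possibly \emph{virtual} quasi-Yamanouchi quasi-Kohnert tableau; surjectivity then holds exactly when none of these preimages fall below the $x$-axis, which is the definition of stability. Some such explicit construction is needed to make claim (4) non-circular.
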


\begin{proof}
Due to the lack of choice at each step, $\psi$ is necessarily a well-defined procedure. This procedure acts on the cells of $T\in \QqKT(a)$ without regard to their entry, and is exactly the procedure used in Lemma~\ref{lem:fail-key}, which yields the greatest (in dominance order) key diagram above $T$. By definition the thread map on cells of $T$ yields a Kohnert tableau for the greatest (in dominance order) key diagram above $T$, because it takes the longest possible thread at each step. Since $T$ is quasi-Kohnert, the thread map on $T$ yields a relabeling of $T$ where for each entry $i$ appearing in $T$, \emph{all} $i$'s are replaced with the row index of the $i$ in the first column of $T$. In particular, the key diagram greatest in dominance order above $T$ has all row lengths in the same relative order as the row lengths of $D_a$, so the image $\psi(T)$ has shape $\flatten(a)$. Moreover, since $\psi$ moves the cells of the relabeled $T$ in order from greatest to smallest, and every cell is already above all smaller-labeled cells before it moves upwards, the resulting tableau has decreasing rows.

To show $\psi(T)\in\SCT(\flatten(a))$, consider the relabeling of $T$ with $m, m-1, \ldots , 1$. The cells of $T$ that are already left-justified in their row form part of a tableau that will be completed by moving the remaining numbered cells into these rows by the procedure of $\psi$. These cells that are already left-justified necessarily satisfy the $\SCT$ conditions, since they are labeled in decreasing order along rows from top to bottom. We need to show that each time $\psi$ places a new labeled cell $C$, say in column $c$, the $\SCT$ conditions remain satisfied. Since $C$ comes to rest in the lowest left-justified row of length $c-1$ above $C$, and since all cells below $C$ have label smaller than $C$, this amounts to establishing that $C$ does not jump over any cell with smaller label. But this is immediate from the definition of $\psi$ since the cells move in order of label, from largest to smallest, so any cell above $C$ (before $C$ moves) has a larger label than that of $C$.

By definition, $\flatten(\wt(T))_i$ is the number of cells in the $i$th row of $T$ from the bottom. On the other hand, $\psi$ relabels the cells of $T$ right to left, bottom to top with $1, \ldots , m$. Hence the $i$th entry of the descent composition of the relabeled $T$ is also exactly the number of cells in the $i$th row of $T$ from the bottom. Moving the cells of $T$ within their columns does not affect the descent composition, hence $\Des(\psi(T)) = \flatten(\wt(T))$.

To reverse the map, starting in the lowest row of $S\in \SCT(\flatten(a))$, push the cell labeled $1$ downwards using Kohnert moves until $1$ is last in the reading word. Then push the cell labeled $2$ down until it is second-last in the reading word, and, when any of these pushes would move $2$ later in reading order than $1$, push the cell labeled $1$ down one row at the same time to maintain their relative reading order. Continue with cells labeled $3, \ldots , m$ in order. Since labels of $S$ decrease along rows, this procedure is well-defined and the resulting cell configuration is a Kohnert diagram for (some translation of) $a$. Now relabel with $a$ and push rows down minimally so that no cell's label exceeds its row index: by construction, this yields an element of $\QKT(a)$, up to allowing virtual diagrams.
It remains to establish that this tableau is quasi-Kohnert. Since $S$ is an $\SCT$, the labels in column $1$ of $S$ decrease down the column, and since a cell never jumps over a cell of smaller label during this process, there is no inversion in the first column of the resulting Kohnert tableau. Moreover, a cell $C$ in column $c$ moving downwards never moves to be immediately right of another cell with larger (in the $\SCT$) label: otherwise, when placing the cells of $S$ from largest to smallest label, $C$ would be added in a non-lowest row of $S$ of length $c-1$, contradicting that $S$ is an $\SCT$. Therefore the second quasi-Kohnert condition is satisfied. Finally, it follows from the definition of virtual that no $T\in \QqKT(a)$ are virtual if and only if $\qkey_a$ is fundamental slide stable.
  \end{proof}

Using this and Theorem~\ref{thm:stable-slide}, we obtain the following two theorems.

\begin{theorem}
  If the nonzero values of $a$ occur in an interval ending with $a_k$, then
  \begin{equation}
    \qkey_{a}(x_1,\ldots , x_k) = QS_{\flatten(a)}(x_1,\ldots , x_k),
  \end{equation}
  where $QS_{\alpha}(x_1,\ldots , x_k)$ is the quasi-Schur polynomial.
  \label{thm:interval-qkey}
\end{theorem}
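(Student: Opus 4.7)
The plan is to chain together three ingredients: the fundamental slide expansion of $\qkey_a$ from Theorem~\ref{thm:quasi-key}, the bijection $\psi: \QqKT(a) \to \SCT(\flatten(a))$ from Lemma~\ref{lem:qKT2QCT}, and the definition of $QS_{\flatten(a)}$. Under the hypothesis that the nonzero entries of $a$ occur in an interval ending at position $k$, every $T \in \QqKT(a)$ has cells confined to rows $1$ through $k$, since entries in row $r$ must lie between $r$ and $k$. Consequently, each $\wt(T)$ is naturally a weak composition of length $k$, and we may regard $\Fund_{\wt(T)}$ as a polynomial in $x_1,\ldots,x_k$.

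The first task is to establish the bijectivity of $\psi$. By Lemma~\ref{lem:qKT2QCT}(4), $\psi$ is surjective precisely when the fundamental slide expansion of $\qkey_a$ is stable. I would show that the hypothesis implies stability by analyzing the inverse procedure in the proof of Lemma~\ref{lem:qKT2QCT}: applied to any $S \in \SCT(\flatten(a))$, it produces a labeled Kohnert diagram whose cells remain in rows $1$ through $k$, yielding a genuine (non-virtual) element of $\QqKT(a)$.

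The second task is to show, for each $T \in \QqKT(a)$, the polynomial identity
\[
\Fund_{\wt(T)}(x_1,\ldots,x_k) = F_{\flatten(\wt(T))}(x_1,\ldots,x_k).
\]
By the defining formulas \eqref{e:fundamental-shift} and \eqref{e:F}, this reduces to showing that every weak composition $c$ of length $k$ with $\flatten(c)$ refining $\flatten(\wt(T))$ automatically satisfies $c \geq \wt(T)$ in dominance order. The key observation is that the quasi-Kohnert conditions together with the interval hypothesis force $\wt(T)$ to be the dominance-minimal weak composition of length $k$ with its given flatten pattern, a property I would verify by direct prefix-sum comparison, splitting into the cases where $\wt(T)$ has many zeros (multiple $c$ to check) versus few zeros (where the refinement condition forces $c = \wt(T)$).

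Combining these ingredients along with $\Des(\psi(T)) = \flatten(\wt(T))$ from Lemma~\ref{lem:qKT2QCT}(2) gives
\[
\qkey_a(x_1,\ldots,x_k) = \sum_{T \in \QqKT(a)} F_{\flatten(\wt(T))}(x_1,\ldots,x_k) = \sum_{S \in \SCT(\flatten(a))} F_{\Des(S)}(x_1,\ldots,x_k) = QS_{\flatten(a)}(x_1,\ldots,x_k),
\]
which is the desired identity. The main obstacle is the second task above: verifying that the dominance condition in $\Fund$ is automatic. While this is intuitively clear from the interval hypothesis, executing the prefix-sum verification cleanly requires leveraging the precise structure of quasi-Kohnert tableaux and the rightmost-anchored placement of the nonzero entries of $a$.
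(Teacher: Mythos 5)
Your overall strategy---expand $\qkey_a$ via Theorem~\ref{thm:quasi-key}, transport the index set through $\psi$, and identify each $\Fund_{\wt(T)}(x_1,\ldots,x_k)$ with a fundamental quasisymmetric polynomial---is the right skeleton, and is what the paper's (very terse) derivation from Lemma~\ref{lem:qKT2QCT} intends. But your first task contains a genuine error: the interval hypothesis does \emph{not} imply that $\psi$ is surjective, i.e.\ that the fundamental slide expansion of $\qkey_a$ is stable. The paper's own data shows this: $a=(0,2,3,2)$ satisfies the hypothesis with $k=4$, yet $\qkey_{0232}$ has only five fundamental slide terms while $\SCT(2,3,2)$ has seven elements, so $\psi$ misses the two standard composition tableaux with descent compositions $(1,2,1,2,1)$ and $(1,1,2,2,1)$. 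The theorem survives because those terms contribute nothing in $k$ variables: $F_{\beta}(x_1,\ldots,x_k)=0$ whenever $\beta$ has more than $k$ parts, since no weak composition of length $k$ can flatten to a refinement of such a $\beta$. So the statement you actually need is that $\psi$ is injective with image exactly $\{S\in\SCT(\flatten(a)) : \ell(\Des(S))\le k\}$, together with the vanishing of $F_{\Des(S)}(x_1,\ldots,x_k)$ for the remaining $S$.

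Both this corrected claim and your acknowledged ``main obstacle'' reduce to one structural lemma that your sketch does not supply, and which is not a ``prefix-sum comparison'' of compositions but a property of the tableaux: for every (possibly virtual) $T\in\QqKT(a)$, the nonempty rows form a consecutive block ending at row $k$, i.e.\ $\wt(T)=0^{k-\ell}\times\flatten(\wt(T))$ with $\ell$ the number of nonempty rows. To prove it: column $1$ of $T$ contains exactly one cell of each label $i$ with $a_i>0$, strictly increasing upward by quasi-Kohnert condition (i), so its top cell is labeled $k$ and lies in some row $r\le k$, and Lemma~\ref{lem:diagram} forces every other column to be empty above row $r$; if $r<k$, the quasi-Yamanouchi condition demands a cell labeled $r$ in row $r$, but Kohnert conditions (ii) and (iii) confine all cells labeled $r$ weakly below the column-$1$ cell labeled $r$, which sits strictly below row $r$---contradiction, so the top nonempty row is row $k$. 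If some nonempty row $i'<k$ has row $i'+1$ empty, the same argument pins the column-$1$ cell labeled $i'$ to row $i'$, and the interval hypothesis (which gives $a_{i'+1}>0$) then forces the column-$1$ cell labeled $i'+1$ into row $i'+1$---contradiction, so the nonempty rows are consecutive. Granting this, $\wt(T)$ is dominance-least among length-$k$ weak compositions whose flattening refines $\flatten(\wt(T))$ (you still need the small observation that passing to a refinement only increases the prefix sums of the left-zero-padded composition), which gives $\Fund_{\wt(T)}(x_1,\ldots,x_k)=F_{\flatten(\wt(T))}(x_1,\ldots,x_k)$; and the virtual preimage of $S\in\SCT(\flatten(a))$ has $\ell(\Des(S))$ consecutive nonempty rows ending at row $k$, hence is non-virtual precisely when $\ell(\Des(S))\le k$. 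With these two repairs your final chain of equalities goes through.
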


\begin{theorem}
  For any weak composition $a$, we have
  \begin{equation}
    \lim_{m\rightarrow\infty}\qkey_{0^m \times a} = QS_{\flatten(a)}(X).
  \end{equation}
  \label{thm:stable-qkey}
\end{theorem}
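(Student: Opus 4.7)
The plan is to combine the expansion in Theorem~\ref{thm:quasi-key} with the bijection $\psi$ of Lemma~\ref{lem:qKT2QCT} and the limit formula of Theorem~\ref{thm:stable-slide}. Starting from
\[ \qkey_{0^m \times a} = \sum_{T \in \QqKT(0^m \times a)} \Fund_{\wt(T)}, \]
the goal is to pass termwise to the stable limit, re-indexing the resulting sum by standard composition tableaux via $\psi$.

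First I would argue that for $m$ sufficiently large the indexing set on the right is essentially constant. By the stability theorem preceding Theorem~\ref{thm:interval-qkey}, the fundamental slide expansion of $\qkey_{0^m \times a}$ stabilizes for $m \geq \eta(a)$; equivalently, by clause (4) of Lemma~\ref{lem:qKT2QCT}, the map $\psi:\QqKT(0^m\times a) \to \SCT(\flatten(0^m\times a)) = \SCT(\flatten(a))$ is a bijection once $m$ is large enough. Under padding by extra zeros, a quasi-Yamanouchi quasi-Kohnert tableau of content $0^m\times a$ corresponds to one of content $0^{m+1}\times a$ simply by translating all rows upward by one, and this translation only prepends a zero to $\wt(T)$ without changing $\flatten(\wt(T))$.

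Next I would apply Theorem~\ref{thm:stable-slide} termwise: $\lim_{m\to\infty} \Fund_{\wt(T)} = F_{\flatten(\wt(T))}(X)$ for the corresponding stable family of tableaux. Re-indexing the resulting sum by $S = \psi(T) \in \SCT(\flatten(a))$ and invoking clause (2) of Lemma~\ref{lem:qKT2QCT} ($\Des(\psi(T)) = \flatten(\wt(T))$), we obtain
\[ \lim_{m\to\infty} \qkey_{0^m \times a} = \sum_{S \in \SCT(\flatten(a))} F_{\Des(S)}(X), \]
which is exactly $QS_{\flatten(a)}(X)$ by Definition~\ref{def:quasi-schur}.

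The main conceptual obstacle has already been dispatched inside Lemma~\ref{lem:qKT2QCT}, namely verifying that $\psi$ is descent-preserving and becomes a bijection precisely when the slide expansion stabilizes. What remains here is a genuinely routine interchange of limit and finite sum: one must only check that for $m \geq \eta(a)$ the summands match up coherently along the sequence of padded compositions, which is immediate from the fact that prepending a zero to $a$ merely prepends a zero to the content of each $T \in \QqKT(0^m\times a)$ without altering $\flatten(\wt(T))$ or the image $\psi(T)$.
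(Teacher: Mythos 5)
Your proposal is correct and follows essentially the same route as the paper: the paper derives Theorem~\ref{thm:stable-qkey} directly by combining Lemma~\ref{lem:qKT2QCT} (the descent-preserving map $\psi$, bijective exactly at stability) with Theorem~\ref{thm:stable-slide} applied termwise to the expansion of Theorem~\ref{thm:quasi-key}. Your additional remarks on the coherence of the padded families under prepending zeros just make explicit the routine limit interchange the paper leaves implicit.
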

Taking the stable limit of Theorem~\ref{thm:key-qkey}, we obtain the following result from \cite{HLMvW11}:

\begin{corollary}
  For $\lambda$ a partition, we have
  \begin{equation}
    s_{\lambda} = \sum_{\sort(\alpha) = \lambda} QS_{\alpha},
  \end{equation}
  where the sum is over strong compositions that rearrange $\lambda$.
\end{corollary}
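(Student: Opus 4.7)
The plan is to derive the identity as the $m\to\infty$ stable limit of Theorem~\ref{thm:key-qkey}. Fix any weak composition $a$ with $\sort(a)=\lambda$, for instance $a=\lambda$ itself. Applying Theorem~\ref{thm:key-qkey} to $0^m \times a$ gives
\[ \key_{0^m \times a} = \sum_{b \in \Qlswap(0^m \times a)} \qkey_b. \]
By Corollary~\ref{cor:stable-key}, the left-hand side tends to $s_\lambda(X)$ as $m \to \infty$.

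Next I would analyze the right-hand side. Each $b \in \Qlswap(0^m \times a)$ arises by a sequence of left swaps on $0^m \times a$ followed by selecting the dominance-minimum in its flattening class. In particular, for $m$ sufficiently large, $b$ has a long block of leading zeros, so Theorem~\ref{thm:stable-qkey} applies and $\lim_{m\to\infty} \qkey_b = QS_{\flatten(b)}(X)$.

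The crux is the claim that, once $m$ is large enough, the flattening map $b \mapsto \flatten(b)$ induces a bijection from $\Qlswap(0^m \times a)$ onto the set of strong compositions $\alpha$ with $\sort(\alpha)=\lambda$. For surjectivity, given any such $\alpha$, one can realize it as $\flatten(b)$ for $b\in \lswap(0^m \times a)$ by placing the parts of $\alpha$ into suitable positions of a length-$(m+\mathrm{len}(a))$ vector with zeros elsewhere; the left-swap relation is verified by observing that the nonzero parts of $a$ can be freely permuted past the prepended zeros via the allowed exchanges $a_i < a_j$ with $i<j$. For injectivity, the definition~\eqref{e:Qlswap} of $\Qlswap$ selects exactly one representative from each $\flatten$-equivalence class.

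With the bijection in hand, the sum on the right is finite and its index set is independent of $m$ for $m$ large. Consequently, we may interchange the $m \to \infty$ limit with the sum to conclude
\[ s_\lambda(X) = \sum_{\sort(\alpha)=\lambda} QS_\alpha(X). \]
The main obstacle I anticipate is the careful verification of the bijection, in particular checking that each $\flatten$-equivalence class inside $\lswap(0^m \times a)$ stably contains a unique dominance-minimum for all sufficiently large $m$; once this is established, commuting the finite sum with the stable limit is routine.
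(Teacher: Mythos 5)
Your proposal is correct and follows essentially the same route as the paper, which simply obtains this corollary by "taking the stable limit of Theorem~\ref{thm:key-qkey}" and invoking Corollary~\ref{cor:stable-key} and Theorem~\ref{thm:stable-qkey} for the two sides; your verification that $\flatten$ gives a bijection from $\Qlswap(0^m\times a)$ onto the rearrangements of $\lambda$ for $m$ large is exactly the detail left implicit in the paper.
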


Connections between the quasi-Kohnert tableau model and combinatorial models for Demazure atoms and quasi-Schur polynomials are examined in greater depth in \cite{Searles}.

\section{A precise understanding of stability}
\label{app:stable}

In this section, we define a statistic on weak compositions giving the precise point at which the fundamental slide expansion of a key polynomial stabilizes, and moreover show that this stability point is the very first time that no new terms appear in the fundamental slide expansion. This parallels results of \cite{AS17} for the stability of the fundamental slide expansion of a Schubert polynomial. 

By Proposition~\ref{prop:key-schur}, for $\lambda$ a partition of length $k$, there is a natural weight-preserving bijection between $\QYT_n(\lambda)$ and $\QKT(0^{n-k}\times\rev(\lambda))$, where $\rev(\lambda)$ is the reversal of $\lambda$ into weakly increasing order. With this identification in mind, we may refine the injection from $\QKT(a)$ to $\QYT_n(\sort(a))$ by incrementally sorting the weak composition $a$ into weakly increasing order, where nonzero entries move weakly left, possibly requiring us to prepend $0$'s. Thus, from now on, we allow \emph{virtual} quasi-Yamanouchi Kohnert tableaux that have cells below the first row.

We now define an operator $t_{i,j}:\QKT(a) \rightarrow \QKT(b)$ that acts as an intermediate step in this refined injection. To define such operators, we consider a position $j$ for which we have a descent $a_{j-1} > a_j$, and choose $i<j$ minimal such that entries weakly increase from $a_i$ to $a_{j-1}$ and inserting $a_j$ immediately left of $a_i$, after possibly moving earlier nonzero entries further to the left, results in weak composition $b$ with a weakly increasing subsequence from $b_{i-2}$ to $b_{j-1}$. For example, if $a = (0,2,0,1,3,4,2,1)$ and we consider the descent of $4>2$ from position $6$ to position $j=7$, we must take $i=5$, and, sliding the $1$ in position $i-1=4$ to the left, we will consider the weak composition $b = (0,2,1,2,3,4,0,1)$. While this is the result on the weak compositions, the map $t_{i,j}$ is defined on quasi-Yamanouchi Kohnert tableaux as described below.

\begin{definition}
  Let $a$ be a weak composition, and $i<j$ indices such that $a_{i-1} < a_j < a_i \le a_{i+1} \leq \cdots \leq a_{j-1}$ with at least one $0$ before $a_i$. Define a map $t_{i,j}$ on $\QKT(a)$ as follows. If $a_{i-1}\neq 0$, let $k<i$ be the smallest index such that $a_k, a_{k+1}, \ldots , a_{i-1}$ are all positive. Let $T\in \QKT(a)$. For any cell of $T$ whose entry is from $\{a_k, a_{k+1}, \ldots , a_{i-1}\}$, decrement its entry by $1$. Then erase entries $i,\ldots,j$ from columns $1$ through $a_j$, re-fill the now-empty cells with $i-1,i,\ldots,j-1$ in order from bottom to top, finally, if $j-1$ occurs in row $j$, move this row and any rows in the way below it down by $1$.
  \label{def:sigma}
\end{definition}

For example, Figure~\ref{fig:qYam_kohnert_stable} shows the elements of $\QKT(0, 0, 3, 2)$, and Figure~\ref{fig:qYam_kohnert_stable_map} shows their images (listed in the same order) under the map $t_{3,4}$, all of which lie in $\QKT(0, 2, 3, 0)$.

\begin{figure}[ht]
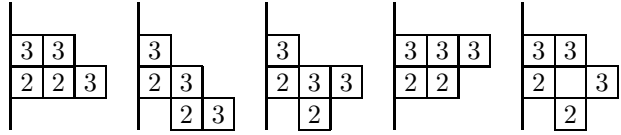

  \begin{center}
    \begin{displaymath}
      \begin{array}{c@{\hskip\cellsize}c@{\hskip\cellsize}c@{\hskip\cellsize}c@{\hskip\cellsize}c}
        \vline\tableau{ \\ 3 & 3 \\ 2 & 2 & 3} &
        \vline\tableau{ \\ 3 \\ 2 & 3 \\ & 2 & 3 } &
        \vline\tableau{ \\ 3 \\ 2 & 3 & 3 \\ & 2 } &
        \vline\tableau{ \\ 3 & 3 & 3 \\ 2 & 2 } &
        \vline\tableau{ \\ 3 & 3  \\ 2 &  & 3 \\ & 2 } 
      \end{array}
    \end{displaymath}
    \caption{\label{fig:qYam_kohnert_stable_map}Respective to Figure~\ref{fig:qYam_kohnert_stable}, the map $t_{3,4}: \QKT(0, 0, 3, 2) \rightarrow \QKT(0,2,3,0)$.}
  \end{center}
\end{figure}

We now show that $t_{i,j}$ is an injection (and characterize when it is a bijection) from $\QKT(a)$ to $\QKT(b)$, where $b$ is the the weak composition obtained from $a$ by moving consecutive nonzero entries immediately left of $a_i$ to the left, then swapping the values at positions $i-1$ and $j$. For example, $t_{5,7}$ gives an injection between $\QKT(0,2,0,1,3,4,2,1)$ and $\QKT(0,2,1,2,3,4,0,1)$.

\begin{lemma}
  Let $a$ be a weak composition, and $i,j$ indices such that $a_{i-1} < a_j < a_i \leq \cdots \leq a_{j-1}$ and there is at least one $0$ before $a_i$. Let $b$ be the weak composition obtained from $a$ by moving consecutive nonzero entries immediately left of $a_i$ to the left, then swapping the values at positions $i-1$ and $j$. Then $t_{i,j}$ defines an injective map from $\QKT(a)$ to $\QKT(b)$ that is also surjective if $\key_a$ is $\Fund$-stable. Moreover, the lowest row occupied by a (virtual) element of $\QKT(a)$ is the same as that for a (virtual) element of $\QKT(b)$.
  \label{lem:lsort}
\end{lemma}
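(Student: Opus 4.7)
The plan is to verify that $t_{i,j}$ produces valid quasi-Yamanouchi Kohnert tableaux of content $b$, exhibit an explicit inverse on the image to obtain injectivity, and use Theorem~\ref{thm:KT2QYT} together with the shape equality $\sort(a) = \sort(b)$ to upgrade injectivity to bijectivity when $\key_a$ is $\Fund$-stable.

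First I would verify that $t_{i,j}(T) \in \QKT(b)$. The decrement step reassigns labels in $\{a_k, \ldots, a_{i-1}\}$ down by one; the hypothesis that some $0$ lies before $a_i$ guarantees room for this leftward shift, and the cells so relabeled lie in rows still weakly above their new labels, preserving Kohnert condition (ii). In columns $1$ through $a_j$, the inequalities $a_j < a_i \le a_{i+1} \le \cdots \le a_{j-1}$ ensure each such column originally contains exactly one cell with each label in $\{i, i+1, \ldots, j\}$; after re-filling with $i-1, i, \ldots, j-1$ from bottom to top, the overall label multiplicities agree with $b$, verifying condition (i). Condition (iii) holds because each new label occupies a single cell in the rebuilt portion of a column and weak descent right of column $a_j$ is untouched. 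The final row-shift enforces condition (ii) for the label $j-1$. The quasi-Yamanouchi condition transfers from $T$ to $t_{i,j}(T)$ because rows not touched by the shift keep their witnesses, while rows affected by the cyclic relabeling receive a cell in the leftmost column matching the new row index precisely when the bottom-to-top re-fill requires it.

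Next I would construct an inverse on the image to obtain injectivity. Given $U$ in the image of $t_{i,j}$, undo the row-shift, erase labels $\{i-1, i, \ldots, j-1\}$ from columns $1$ through $a_j$ and re-fill from bottom to top with $\{i, i+1, \ldots, j\}$, then increment the previously decremented block of labels back up by one. There is no choice at any step: column positions force a unique cyclic re-labeling consistent with the content $a$. This gives injectivity and also the last claim of the lemma, since the lowest occupied row of $t_{i,j}(T)$ differs from that of $T$ only by a forced row-shift that is symmetric under this inverse.

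For surjectivity under $\Fund$-stability, I would invoke Theorem~\ref{thm:KT2QYT}: the map $\varphi_a : \QKT(a) \to \QYT_n(\sort(a))$ is a bijection when $\key_a$ is $\Fund$-stable, and $\varphi_b : \QKT(b) \to \QYT_n(\sort(b))$ is always injective. Since $b$ rearranges the entries of $a$, we have $\sort(a) = \sort(b)$, so $\varphi_b \circ t_{i,j} \circ \varphi_a^{-1}$ is an injection of a finite set $\QYT_n(\sort(a))$ into itself, hence a bijection, forcing $t_{i,j}$ to be a bijection. The main obstacle will be verifying Kohnert condition (iv) in $t_{i,j}(T)$: the cyclic shift from $\{i, \ldots, j\}$ to $\{i-1, \ldots, j-1\}$ in columns $1$ through $a_j$ may create new column inversions, and each must be witnessed by a cell with the smaller label in the column immediately to the right. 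One must carefully track that column $a_j+1$ of $T$ contains a cell labeled with each of $i, \ldots, j-1$ but none labeled $j$, and that the row positions of these cells provide the required witnesses in every case, with the case analysis depending on the relative sizes of $a_j$ and $a_i$ and on where the quasi-Yamanouchi condition of $T$ forces cells in column $1$.
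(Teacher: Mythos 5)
There is a genuine gap in your treatment of the final claim. You dispose of the statement that the lowest occupied row of $\QKT(a)$ equals that of $\QKT(b)$ in one sentence, asserting that the lowest row of $t_{i,j}(T)$ ``differs from that of $T$ only by a forced row-shift that is symmetric under this inverse.'' But the row-shift built into Definition~\ref{def:sigma} (``if $j-1$ occurs in row $j$, move this row and any rows in the way below it down by $1$'') genuinely lowers the bottom row of an individual tableau by one whenever $T$ has a $j$ lying above an $i$; the claim is emphatically \emph{not} that each $T$ and $t_{i,j}(T)$ share a bottom row, but that the \emph{minimum over the whole set} $\QKT(a)$ coincides with the minimum over $\QKT(b)$. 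This is the substantive content of the lemma, and the paper's proof spends most of its length on it: one must show that whenever $t_{i,j}$ pushes the bottom row of some $T$ down by one, there already exists \emph{another} element of $\QKT(a)$ whose bottom row is at least that low. The paper does this by locating the rightmost $j$ above an $i$ (cells $C$ and $D$, with $D$ guaranteed by $a_j<a_i$) and, in a three-way case analysis on the row below $D$, explicitly constructing a new element of $\QKT(a)$ with a strictly lower bottom row (in the last case also invoking condition (iv) to find a companion cell $E$). Nothing in your inverse-map argument substitutes for this construction, so the ``moreover'' clause is unproven as written.

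Two smaller remarks. Your well-definedness check can be shortcut as in the paper: $t_{i,j}(T)$ is exactly the Kohnert labeling $L_b$ of the (possibly shifted) underlying diagram, so Lemma~\ref{lem:well-label} and Theorem~\ref{thm:kohnert} do the work of verifying conditions (i)--(iv), and the inverse (relabel with $a$, push rows up until quasi-Yamanouchi) gives injectivity just as you intend. Your surjectivity argument via $\varphi_a$, $\varphi_b$ and a pigeonhole count on $\QYT_n(\sort(a))=\QYT_n(\sort(b))$ is a genuinely different and rather clean route compared with the paper's direct inverse; however, it quietly assumes that $\QKT(b)$ contains no virtual tableaux (so that $\varphi_b$ of Theorem~\ref{thm:KT2QYT} applies and is injective), and that assumption is exactly what the unproven lowest-row claim delivers, so the counting argument cannot stand on its own until that claim is repaired.
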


\begin{proof}
  For $T\in \QKT(a)$, $t_{i,j}(T)$ is precisely the Kohnert labeling from Definition~\ref{def:labelling_algorithm} using $b$, so the map is well-defined from $\QKT(a)$ to $\QKT(b)$. For injectivity and surjectivity, observe that to reverse the procedure, simply relabel with $a$ and push rows up as needed until the result is quasi-Yamanouchi. (This reversal of the procedure terminates because at every instance, we move all cells in a row upwards, and cells in the first column can never be moved to a row index greater than their label.) Thus it remains to show that the lowest row occupied by a virtual element of $\QKT(a)$ is the same as that for a virtual element of $\QKT(b)$. We claim that if $T \in \QKT(a)$ has some $j$ above an $i$ and the lowest row of $T$ is moved down by $t_{i,j}$, then there is another element of $\QKT(a)$ that has a row strictly below the lowest of $T$. Since $t_{i,j}$ pushes the lowest row of any tableau having a $j$ above an $i$ down by at most one, while if every $j$ is below an $i$ then $t_{i,j}$ does not change the lowest row, the result follows from the claim.

  To see the claim, first observe that $t_{i,j}$ can only push down rows of $T$ involving $i$ and $j$, and by extension the sequence of nonempty rows immediately below, by at most one. Thus if $T$ has an empty row between the rows containing an entry $i$ or $j$ and its bottom row, then $T$ and $t_{i,j}(T)$ have the same bottom row. Thus in what follows, we assume that $T$ does not have such an empty row, implying that if $t_{i,j}$ pushes the rows involving $i$ and $j$ down by one, then it also pushes the bottom row of $T$ down by one.   
   Let $C$ be the rightmost cell of $T$ containing a $j$ that lies strictly above an $i$, and let $D$ be the cell in the column to the right of this containing $i$ (which must exist since $a_j < a_i$). If the row below $D$ is empty, then $D$ must be in the lowest row since nothing below this will move down. In this case, we can move $C$ to the row below $D$, giving another element of $\QKT(a)$ that has a lower bottom row. If the row below $D$ is not empty and has a cell in the column of $C$ or to it's left, then we can push this row down and drop $C$ to the row between it and $D$, once again giving another element of $\QKT(a)$ that has a lower bottom row. Finally, if the row below $D$ is not empty and all cells are strictly right of the column of $C$, then the cell immediately below $D$ is occupied, necessarily with an entry larger than $j$. In particular, there must be an $i$ immediately right of $D$ by Kohnert tableau condition (iv), call the cell containing it $E$. Then we may push all rows below $D$ down by one, and push both $C$ and $E$ to the row below $D$, once again giving another element of $\QKT(a)$ that has a lower bottom row.
\end{proof}

Let $\lsort(a)$ be the minimal (in dominance order) composition $b$ such that $\flatten(b)$ is the weakly increasing rearrangement of $\flatten(a)$ and $b$ is obtained from $a$ by moving nonzero entries weakly left, or let $\lsort(a) = \varnothing$ if no such $b$ exists. In terms of key diagrams, $\lsort(a)$ results from moving down rows of the key diagram of $a$ as little as possible until the lengths of nonempty rows weakly increase. For example, $\lsort(0,0,2,0,0,0,4,1,0,3,2) = (0,1,2,0,2,3,4,0,0,0,0)$ and $\lsort(0,3,0,2,2) = \varnothing$. 

Define a new statistic $\sigma$ on compositions that measures the number of leading $0$'s needed or in excess for computing $\lsort(a)$ by
\begin{equation}
  \sigma(a) = \left\{ \begin{array}{rl}
    \min\{ m \mid \lsort(0^m \times a) \neq \varnothing \} & \mbox{if } \lsort(a) = \varnothing, \\
    -\max\{ i \mid \lsort(a)_{1\ldots i} = 0^i \} & \mbox{otherwise}.
  \end{array} \right.
  \label{e:sigma}
\end{equation}
For example, $\sigma(0,0,2,0,0,0,4,1,0,3,2) = -1$ and $\sigma(0,3,0,2,2)=1$. Note that $\sigma(0^m \times a) = \sigma(a)-m$, and $\sigma(\lsort(a)) = \sigma(a)$, if $\lsort(a) \neq \varnothing$. Define a new statistic $\eta$ on compositions that measures the depth of the lowest (virtual) quasi-Yamanouchi Kohnert tableau by
\begin{equation}
  \eta(a) = |a| - \max(a) + 1 - \ell(a) + \sigma(a).
  \label{e:eta}
\end{equation}
where $\ell(a)$ is the number of nonzero entries of $a$. For example, $\eta(0,0,2,0,0,0,4,1,0,3,2) = 3$ and $\eta(0,3,0,2,2)= 3$. Again, note that $\eta(0^m \times a) = \eta(a)-m$. 

\begin{theorem}
  For any weak composition $a$, there is a (virtual) quasi-Yamanouchi Kohnert tableau for $a$ with a cell $\eta(a)$ rows below row $1$, and no quasi-Yamanouchi Kohnert tableau for $a$ has a cell in any row lower than this. 
  \label{thm:lowest}
\end{theorem}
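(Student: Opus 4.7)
The plan is to reduce the statement to the weakly-increasing case via iterated applications of the operator $t_{i,j}$ from Lemma~\ref{lem:lsort}, then to verify the formula in that case using the bijection of Theorem~\ref{thm:symmetric}. Starting from a general $a$ with $\lsort(a) \neq \varnothing$ (the case $\lsort(a) = \varnothing$ being handled by first virtually prepending $\sigma(a)$ zeros), while $\flatten$ of the current composition is not weakly increasing I would locate indices $i < j$ satisfying the hypotheses of Lemma~\ref{lem:lsort} and apply $t_{i,j}$. The resulting composition $b$ satisfies $|b| = |a|$, $\max(b) = \max(a)$, $\ell(b) = \ell(a)$, and $\lsort(b) = \lsort(a)$, so $\sigma$ and hence $\eta$ are preserved. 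Termination of the iteration at $\lsort(a)$ follows from a monovariant such as the number of inversions in $\flatten$. By Lemma~\ref{lem:lsort}, the lowest row occupied by a (virtual) quasi-Yamanouchi Kohnert tableau is preserved at every step, reducing the theorem to the weakly-increasing case.

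For weakly-increasing $a = (0^m, \lambda_\ell, \ldots, \lambda_1)$ of length $n = m + \ell$ with partition $\lambda = \sort(a)$, Theorem~\ref{thm:symmetric} gives a bijection $\KT(a) \to \SSYT_n(\lambda)$ in which a cell in Kohnert row $r$ with Kohnert label $i$ is sent to a cell with SSYT label $(n+1-i) + (i-r) = n+1-r$; so the SSYT label depends only on the Kohnert row. Extending to virtual Kohnert tableaux (with cells below row $1$) corresponds to allowing SSYT labels larger than $n$, and a direct check identifies the Kohnert quasi-Yamanouchi condition with the classical quasi-Yamanouchi condition on SSYT from \cite{AS17}. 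Hence the deepest virtual Kohnert row corresponds to the maximum label appearing in any quasi-Yamanouchi Young tableau of shape $\lambda$, which equals $|\lambda| - \lambda_1 + 1$: each QYT weight is the descent composition of some standard Young tableau of shape $\lambda$, and the maximum number of descents over such SYT is $|\lambda| - \lambda_1$ (since at least $\lambda_1 - 1$ transitions between consecutive values must remain within the first row). Translating back, the deepest Kohnert row is $n + 1 - (|\lambda| - \lambda_1 + 1)$, so the number of rows below row $1$ is $|\lambda| - \lambda_1 + 1 - \ell - m = |a| - \max(a) + 1 - \ell(a) + \sigma(a) = \eta(a)$, using $\sigma(a) = -m$ for weakly-increasing $a$ with $m$ leading zeros.

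The hard part will be verifying that the Kohnert and SSYT quasi-Yamanouchi conditions match under the bijection in the virtual regime, since the Kohnert condition involves an \emph{either-or} between a same-row label condition and a row-above-weakly-right condition, while the SSYT side is a single left-positioning condition on leftmost labels; careful tracing through the fall process should make the correspondence transparent. A secondary obstacle will be the consistent bookkeeping of virtual prepended zeros when $\lsort(a) = \varnothing$, to ensure that Lemma~\ref{lem:lsort}'s preservation of the lowest row remains valid throughout the entire iteration (including the initial step of adding $\sigma(a)$ virtual zeros).
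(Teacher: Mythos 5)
Your overall strategy coincides with the paper's for the reduction step: both arguments prepend the necessary virtual zeros, repeatedly apply $t_{i,j}$ via Lemma~\ref{lem:lsort} (which preserves the lowest occupied virtual row as well as $|a|$, $\max(a)$, $\ell(a)$ and $\sigma$), and thereby reduce to a composition whose nonzero part is weakly increasing. Where you genuinely diverge is the base case. The paper observes that $\key_b$ for such $b$ is a Schur, hence Schubert, polynomial, and imports the stability statistic $\overline{\eta}(w)$ for Schubert polynomials from \cite{AS17} (computed from the Lehmer code of $w$, with the $\delta(w)$ correction), then checks that $\overline{\eta}(w)=\eta(b)$. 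You instead work directly with the weight-reversing bijection of Theorem~\ref{thm:symmetric}, under which a cell in Kohnert row $r$ acquires SSYT entry $n+1-r$, so the deepest virtual row is governed by the largest entry of a quasi-Yamanouchi tableau of shape $\lambda$, namely $|\lambda|-\lambda_1+1$. Your route is more self-contained (it avoids the detour through permutations and the Schubert stability formula) at the price of two things you correctly flag but do not carry out: (a) the identification of the Kohnert quasi-Yamanouchi condition with the SSYT one in the virtual regime --- this does hold (condition (2) of Definition~\ref{def:quasi-Yam} translates exactly to ``some $v-1$ lies weakly right of the leftmost $v$'', and condition (1) corresponds to the unconstrained value $1$), and is essentially the identification of $\QYT_n(\lambda)$ with $\QKT(0^{n-k}\times\rev(\lambda))$ that the paper itself asserts at the start of Section 5; and (b) the bound $|\lambda|-\lambda_1+1$ on the largest QYT entry, which is implicit in Proposition~\ref{prop:schur-F} and whose sharpness you would need to exhibit with an explicit extremal tableau. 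Neither is a fatal gap, but both need to be written out for the argument to be complete; with them supplied, your proof is a valid alternative to the paper's.
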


\begin{proof}
  Let $N(a)$ denote the lowest occupied row for any element of $\QKT(a)$. Let $k = \min\{ m \mid \lsort(0^m \times a) \neq \varnothing \}$. If some $a_j>0$ and $a_{j-1}=0$ but $a_i>0$ for some $i<j$, then we may swap $a_{j-1}$ and $a_j$ without changing $N(a)$. Therefore, by repeated applications of Lemma~\ref{lem:lsort}, $N(0^k \times a) = N(\lsort(0^k \times a))$. Let $b$ be the composition that moves non leading $0$'s of $\lsort(0^k \times a)$ to the right. Since $b$ is obtained from $\lsort(a)$ via a sequence of the swaps as described, we have $N(0^k \times a) = N(b)$. 
  
  By Theorem~\ref{thm:symmetric}, $\key_b$ is a Schur polynomial which, in particular, is also a Schubert polynomial. By \cite{AS17}, the fundamental slide expansion of $\key_b$ stabilizes precisely at $\overline{\eta}(w)$, where $w$ is the permutation whose Lehmer code is $b$, and 
  \begin{displaymath}
    \overline{\eta}(w) = \mathrm{inv}(w) - \max(L(w)) + \delta(w) - \min\{i \mid w_i > w_{i+1} \},
  \end{displaymath}
  where $\delta(w) = 0$ if $\max(L(w))$ is attained at some position later than the first descent, and $\delta(w) = 1$ otherwise. In particular, $\overline{\eta}(w)=N(b)$.
  
  Note that $\sigma(b)$ is equal to the negative of the number of leading zeros of $b$, since $\lsort(b)=b$. Since $b=L(w)$ is weakly increasing for the first $-\sigma(b) + \ell(b)$ terms and then all zeros, we have
  \begin{displaymath}
    N(b) = |b| - \max(b) + 1 - (-\sigma(b) + \ell(b)) = \eta(b). 
  \end{displaymath}
 By definition, every term above except possibly $-\sigma(b)$ is the same when $b$ is replaced with $a$:
 \begin{displaymath}
   N(b) = |a| - \max(a) + 1 + \sigma(b) - \ell(a) \\ = \eta(a) + \sigma(b)-\sigma(a). 
 \end{displaymath}
 First suppose $k = 0$. Then by definition $\sigma(b) = \sigma(\lsort(0^k\times a)) = \sigma(\lsort(a)) = \sigma(a)$, and so
 \begin{displaymath}
   N(a) = N(0^k \times a) = N(b) = \eta(a).
 \end{displaymath}
 Next suppose $k>0$. Then by definition $\sigma(a) = k$ and $\lsort(0^k\times a)$ has no leading zeros. Thus $b$ has no leading zeros and $\sigma(b)=0$. Therefore
 \begin{displaymath}
   N(a) = N(0^k \times a) + k = N(b) + k = \eta(a)-k+k = \eta(a).
 \end{displaymath}
\end{proof}

Theorem~\ref{thm:lowest} shows that $\eta(a)$ is the stability point for the fundamental slide expansion of the key polynomial $\key_a$. In fact, stability occurs at the first time the expansion is stable from one step to the next. We prove this using \emph{spring-loaded pushes} on quasi-Yamanouchi Kohnert tableaux.

\begin{definition}
  A \emph{spring-loaded push} on a quasi-Yamanouchi Kohnert tableau is defined as follows. Let $C$ denote the rightmost cell in some row $i$, and let $j<i$ denote the highest nonempty row below row $i$, with $D$ the leftmost cell in row $j$. Then
  \begin{enumerate}
  \item if the column of $D$ is strictly left of the column of $C$ and either the columns are nonadjacent or the label of $D$ is less than the label of $C$, then move every cell below row $i$ except $D$ down one row and move cell $C$ down to row $j$;
  \item otherwise, if $C$ and $D$ lie in the same column and there is a cell, say $E$, immediately right of $D$, and the label of $C$ is bigger than that of $D,E$, then move every cell below row $i$ except $D,E$ down one row and move cell $C$ down to row $j-1$.
  \end{enumerate}
  Finally, apply the destandardization map to the result.
  \label{def:dual}
\end{definition}

\begin{figure}[ht]
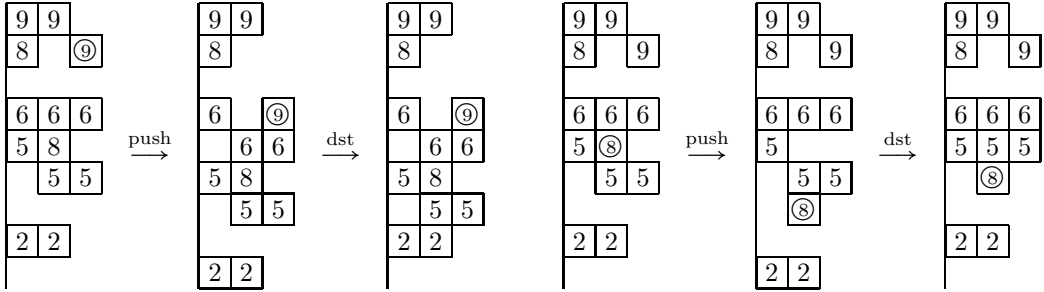

  \begin{center}
    \begin{displaymath}
      \vline\tableau{ 9 & 9 \\ 8 &  & \cirbox{9} \\ \\ 6 & 6 & 6 \\ 5 & 8 \\ & 5 & 5  \\ \\ 2 & 2  \\  }
      \hspace{1em} \raisebox{-4\cellsize}{$\stackrel{\mathrm{push}}{\longrightarrow}$} \hspace{1em}
      \vline\tableau{ 9 & 9 \\ 8  \\ \\ 6 & & \cirbox{9} \\ & 6 & 6  \\  5 & 8 \\ & 5 & 5  \\ \\ 2 & 2  \\  }
      \hspace{1em} \raisebox{-4\cellsize}{$\stackrel{\destand}{\longrightarrow}$} \hspace{1em}
      \vline\tableau{ 9 & 9 \\ 8  \\ \\ 6 & & \cirbox{9} \\ & 6 & 6  \\  5 & 8 \\ & 5 & 5  \\ 2 & 2  \\  }
			\hspace{3em}
      \vline\tableau{ 9 & 9 \\ 8 &  & 9 \\ \\ 6 & 6 & 6 \\ 5 & \cirbox{8} \\ & 5 & 5 \\ \\ 2 & 2  \\  }
	    \hspace{1em} \raisebox{-4\cellsize}{$\stackrel{\mathrm{push}}{\longrightarrow}$} \hspace{1em}
      \vline\tableau{ 9 & 9 \\ 8 &  & 9 \\ \\ 6 & 6 & 6 \\ 5 \\ & 5 & 5 \\ & \cirbox{8}  \\ \\ 2 & 2  \\  }
      \hspace{1em} \raisebox{-4\cellsize}{$\stackrel{\destand}{\longrightarrow}$} \hspace{1em}
      \vline\tableau{ 9 & 9 \\ 8 &  & 9 \\ \\ 6 & 6 & 6 \\ 5 & 5 & 5 \\ & \cirbox{8}  \\ \\ 2 & 2  \\  }
    \end{displaymath}
    \caption{\label{fig:spring}Examples of the two types of spring-loaded pushes, with cell $C$ circled.}
  \end{center}
\end{figure}

It is easy to see that the Kohnert tableau conditions are maintained by a spring-loaded push, and the quasi-Yamanouchi condition is forced by the destandardization map.

\begin{lemma}
  The graph on $\QKT(a)$ given by connecting $T$ and $U$ whenever one can be obtained from the other by a spring loaded push is connected.
  \label{lem:connected}
\end{lemma}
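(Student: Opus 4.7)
The plan is to show that every $T \in \QKT(a)$ (allowing virtual tableaux, per the convention of Section~\ref{app:stable}) is connected via spring-loaded pushes to the Yamanouchi Kohnert tableau $Y_a$, in which row $i$ contains $a_i$ cells labeled $i$, left-justified. Since $Y_a$ is a common reference for all of $\QKT(a)$, this will establish connectedness.

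First, I would introduce the monovariant $h(T) = \sum_{C \in T} r(C)$, where $r(C)$ is the row index of the cell $C$, and verify that every spring-loaded push strictly decreases $h$. In case~(1) of Definition~\ref{def:dual}, the cell $C$ drops from row $i$ to row $j \leq i-1$ and every other cell strictly below row $i$ (aside from $D$) drops by one row, so the net vertical movement is strictly downward. In case~(2), $C$ drops by $i-j+1\geq 2$ rows and the remaining cells below row $i$ (aside from $D,E$) drop by one. The subsequent destandardization step can only raise rows whose leftmost entry exceeds the current row index, a condition which a careful accounting confirms can compensate for at most part of the decrease; in particular $h(Y_a)$ is the unique maximum on $\QKT(a)$.

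Next, I would define the \emph{reverse} of a spring-loaded push as its obvious inverse, raising $C$ back to row $i$ and restoring displaced cells to their pre-push rows (with destandardization applied, if needed, to restore the quasi-Yamanouchi property). I would verify that the reverse operation produces an element of $\QKT(a)$ precisely when the appropriate structural preconditions hold around $C$. The main claim is then: if $T \neq Y_a$, some reverse spring-loaded push applies. To prove this, I would locate the topmost row in which $T$ differs from $Y_a$ and, within it, single out the rightmost cell $C$ not in its Yamanouchi position. The quasi-Kohnert conditions~(i) and~(ii) of Definition~\ref{def:quasi-Kohnert}, the quasi-Yamanouchi condition, and the canonical labeling of Definition~\ref{def:labelling_algorithm} force the configuration immediately above $C$ to resemble either a reverse case~(1) template (when $C$ lies strictly right of the leftmost cell of the row above it) or a reverse case~(2) template (when $C$ shares its column, after a short comparison of labels of $C$ and its neighbors).

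Since reverse pushes strictly increase the finite-valued $h$, iterating from any $T$ terminates at $Y_a$, and the path so constructed exhibits $T$ as connected to $Y_a$; hence every pair of vertices in $\QKT(a)$ is connected through $Y_a$. The main obstacle will be the combinatorial case analysis establishing existence of a reverse push in every non-Yamanouchi case: one must verify that the quasi-Yamanouchi condition, the inversion condition~(iv) of Definition~\ref{def:kohnert}, and the canonical labeling conspire to produce exactly one of the two reverse-push configurations around the first misplaced cell, and that the destandardization step of the forward push is inverted correctly by the reverse procedure, including in the virtual-row case.
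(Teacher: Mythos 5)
Your overall architecture matches the paper's: connect every $T\in\QKT(a)$ to the Yamanouchi tableau by showing that each non-Yamanouchi $T$ is the image of some $U$ under a spring-loaded push, then iterate the reverse construction. The genuine gap is your termination argument. The monovariant $h(T)=\sum_{C}r(C)$ does \emph{not} decrease under every spring-loaded push, because a push is ``move cells down, \emph{then} destandardize,'' and the destandardization can lift rows by more than the push lowered them. The paper's own Figure~\ref{fig:unspring-1} is a counterexample: there the push applied to $U$ sends $C=(3,4)$ to $(2,4)$ and $D=(2,2)$ nowhere, pushes $(2,3)$ and $(2,5)$ to row $1$, and then destandardization merges the new row $2$ (containing $D$ and $C$) up into row $3$ and lifts row $1$ back to row $2$; the net effect is that every cell returns to its row in $U$ except $D$, which rises by one, so $h(T)=h(U)+1$. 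Thus forward pushes can strictly \emph{increase} $h$, reverse pushes can strictly decrease it, and the claim that iterated reverse pushes must terminate at $Y_a$ because $h$ is bounded above collapses. This is not a repairable detail of ``careful accounting'': termination is the delicate half of the lemma, and the paper handles it with a local argument instead, tracking the designated cell $C$ through a three-case analysis of where $D$ sits and showing that $C$ never descends, can stall only finitely often (because $D$ moves strictly left at each stalled step), and eventually reaches the row equal to its label, at which point the procedure passes to cells below or to the right of $C$.

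Two secondary points. First, the existence of a reverse push at every non-Yamanouchi $T$ --- the other half of the proof --- is only sketched; note that the paper selects the \emph{leftmost} cell with entry exceeding its row index in the \emph{highest} offending row (not the rightmost cell in the topmost row differing from $Y_a$), and its verification that the constructed $U$ lies in $\KT(a)$, survives destandardization with $C$ still rightmost in its row, and pushes back to $T$ genuinely depends on that choice; with your choice of $C$ the case analysis would have to be redone and it is not clear it closes. Second, you invoke the quasi-Kohnert conditions of Definition~\ref{def:quasi-Kohnert}, but $\QKT(a)$ in this lemma denotes quasi-Yamanouchi Kohnert tableaux (Definition~\ref{def:quasi-Yam}), a different notion; the relevant structural constraints are Kohnert tableau conditions (i)--(iv) together with the quasi-Yamanouchi condition.
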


\begin{proof}
   Suppose $T \in \QKT(a)$ is not the Yamanouchi one. Then some row has an entry greater than its row index. Let $i$ be the highest such row and $C$ its leftmost cell with value exceeding $i$. If $C$ is the leftmost cell in row $i$, let $D$ be the cell immediately above $C$; otherwise let $D$ be the rightmost cell left of $C$ in row $i$. Let $U^{\prime}$ be obtained from $T$ by moving $D$ and everything below or to the right of $D$ down one row and moving $C$ up to the original row of $D$. For examples, see Figures~\ref{fig:unspring-1} and \ref{fig:unspring-2}. We claim that $U^{\prime} \in \KT(a)$, and that $T$ is the result of a spring-loaded push on $U = \destand(U^{\prime})$. 

   \begin{figure}[ht]
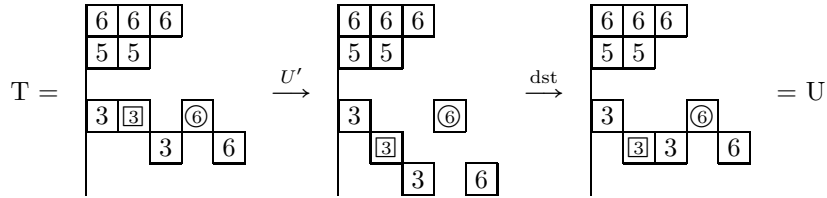

     \begin{center}
       \begin{displaymath}
       \raisebox{-2\cellsize}{T =} \hspace{1em}
         \vline\tableau{ 6 & 6 & 6 \\ 5 & 5   \\ \\ 3 & \sqbox{3} & & \cirbox{6} \\ & & 3 & & 6 \\  }
         \hspace{1em} \raisebox{-2\cellsize}{$\stackrel{U'}{\longrightarrow}$} \hspace{1em}
         \vline\tableau{  6 & 6 & 6 \\ 5 & 5   \\ \\ 3 & & & \cirbox{6} \\ & \sqbox{3} \\ & & 3 & & 6 }
         \hspace{1em} \raisebox{-2\cellsize}{$\stackrel{\destand}{\longrightarrow}$} \hspace{1em}
         \vline\tableau{ 6 & 6 & 6\ \\ 5 & 5   \\ \\ 3 & & & \cirbox{6} \\ & \sqbox{3} & 3 & & 6 \\   }
         \hspace{1em} \raisebox{-2\cellsize}{= U} 
       \end{displaymath}
       \caption{\label{fig:unspring-1}Constructing $U$ from $T$ so that $T$ is a spring-loaded push of $U$, with $C$ leftmost in its row. Cell $C$ is circled, cell $D$ is boxed.}
     \end{center}
   \end{figure}
   
   The choice of $D$ is well-defined since if $C$ is the leftmost in its row, then since its value is not $i$, row $i+1$ must have some entry weakly right of $C$, but, by the choice of $C$, all rows $j>i$ have all cells left-justified with entries equal to $j$. Columns maintain their entries (condition (i)), and cells other than $C$ are moved weakly down and the entry in $C$ is at least $i+1$, so no entry is less than its row index (condition (ii)). Furthermore, no cell left of $C$ in row $i$ has entry equal to that of $C$, so equal entries weakly descend left to right (condition (iii)). The only column inversion that could have changed is the one between $C$ and the cell immediately above it (if it exists), and the choice of $C$ ensures that the inversion is removed in passing to $U^{\prime}$, so no new column inversions are created (condition (iv)). Therefore $U^{\prime} \in \KT(a)$ and so $U = \destand(U^{\prime}) \in \QKT(a)$. Since $C$ is the rightmost cell in its row in $U^{\prime}$ and the row below has an entry ($D$) weakly left of $C$, the destandardization map will not merge $C$ with the row below it, so its position at the end of its row is maintained in $U$. Therefore we may apply a spring-loaded push to cell $C$ in $U$. 
   
   \begin{figure}[ht]
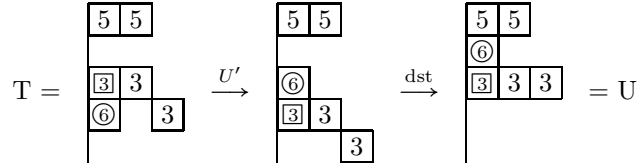

     \begin{center}
       \begin{displaymath}
         \raisebox{-2\cellsize}{T =} \hspace{1em}
         \vline\tableau{  5 & 5 \\ \\ \sqbox{3} & 3 \\ \cirbox{6} & & 3 \\ }
         \hspace{1em} \raisebox{-2\cellsize}{$\stackrel{U'}{\longrightarrow}$} \hspace{1em}
         \vline\tableau{  5 & 5 \\ \\ \cirbox{6} \\ \sqbox{3} & 3 \\  & & 3 \\ }
         \hspace{1em} \raisebox{-2\cellsize}{$\stackrel{\destand}{\longrightarrow}$} \hspace{1em}
         \vline\tableau{  5 & 5 \\ \cirbox{6} \\ \sqbox{3} & 3 & 3 \\ \\ }
         \hspace{1em} \raisebox{-2\cellsize}{= U} 
       \end{displaymath}
       \caption{\label{fig:unspring-2}Constructing $U$ from $T$ so that $T$ is a spring-loaded push of $U$, with $C$ not leftmost in its row. Cell $C$ is circled, cell $D$ is boxed.}
     \end{center}
   \end{figure}
   
   First suppose $D$ is in row $i$ of $T$ and is not leftmost in its row. Then $C$ remains in row $i$ and $D$ and everything to its right (except $C$) is pushed down to row $i-1$ in $U^{\prime}$, and remains there in $U$: it cannot spring back up to row $i$ since $C$ is right of $D$. Moreover, $D$ is now the leftmost cell in row $i-1$ of $U$. When we push cell $C$ in $U$, $C$ goes down to row $i-1$ and all entries below and to the right of $D$ (but not $D$, since $D$ is leftmost in row $i-1$) move down one row. Everything in row $i$ is left of $D$, so when destandardizing both $C$ and $D$ spring up to row $i$, and everything that was pushed down from row $i-1$ to row $i-2$ of $U$ lies right of $C$, and so it springs up to row $i$ as well under destandardization, resulting in $T$.

   Second suppose $D$ is the leftmost cell in row $i$ of $T$, i.e., $D$ is in the first column. If $C$ is anchored by a cell in row $i+1$, then $U$ is simply $T$ with all boxes in row $i$ or below except $C$ pushed down one row, so pushing $C$ in $U$ gives back $T$. If $C$ is not anchored by a cell in row $i+1$, then when destandardizing $U'$, $D$ springs back up to row $i$ and $C$ springs up to a row above $i$. If the cells in row $i-1$ of $T$ were anchored by some cell other than $C$, then every cell below row $i$ in $U'$ springs up one row and $U$ is just $T$ with $C$ in the lowest row above $i$ that satisfies the quasi-Yamanouchi condition, and pushing $C$ in $U$ gives back $T$. If on the other hand the cells in row $i-1$ of $T$ were anchored only by $C$, then in $U$ they all spring up into row $i$ and all rows below also spring up one row. Now pushing $C$ in $U$ pushes all these cells (except $D$) back down one row, giving $T$,

   Third suppose $D$ is above $C$ in row $i+1$. Since the entry of $C$ is greater than $i$ and $D$, above it, has entry $i+1$, it is greater than $i+1$. By (iv) and the choice of $C$, there is a cell immediately right of $D$ in row $i+1$ with entry $i+1$. Passing to $U^{\prime}$, the new row $i$ has its leftmost entry in the same column as $C$. When pushed, $C$ moves under this row of $i+1$'s, which spring up to row $i+1$ since they lie right of everything that remains in row $i+1$. Since $C$ was the leftmost cell in row $i$ in $T$, everything in row $i-1$ is right of $C$, so row $i-1$ springs back to row $i$, giving back $T$.

   Finally, we show that repeated application of this procedure to $T$ obtains the Yamanouchi Kohnert tableau. We claim that the procedure always applies to cell $C$ until cell $C$ is in the row indexed by its label. Indeed, in the third case above, $C$ moves strictly upwards and no cell above or left of $C$ in $U$ was interfered with, so if $C$ is not yet in the same row as its label in $U$, then the next iteration will apply to $C$. In the second case, either cell $C$ moves strictly upwards or it remains in place but the next iteration will use $C$ with the third case, moving $C$ strictly upwards. In the first case, $C$ remains in place when passing from $T$ to $U$, but the cell $D$ to the left of $C$ moves down and out of row $i$, meaning in the next iteration the cell $D$ will be strictly further left than in the previous one. Thus the second case will be reached, so cell $C$ eventually moves strictly upwards.

   Since cell $C$ either remains in the same place or moves upwards, and it can remain in the same place for only a finite number of iterations, it eventually reaches the row agreeing with its label. We claim that once cell $C$ is in the row agreeing with its label, all cells above row $i$ are in the row agreeing with their label. To see this, note a that problem can occur only when the third case applies: if this case is used and cell $C$ lands in row $j>i$, then some cells of row $j$ are pushed down to row $j-1$. But as soon as cell $C$ leaves row $j$ after further iterations, all boxes labeled $j$ immediately spring back up to row $j$, proving the claim. Hence once $C$ reaches the row of its label, this procedure can now be re-applied with a cell that was below or to the right of $C$ in $T$. Iterating, all cells eventually move to the row of their label, giving the Yamanouchi Kohnert tableau. 
\end{proof}

\begin{figure}[ht]
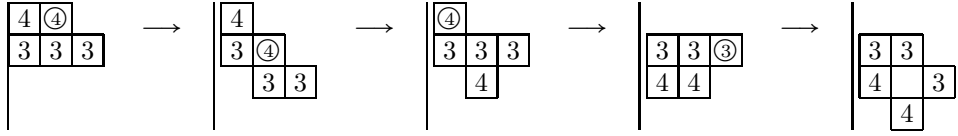

  \begin{center}
    \begin{displaymath}
      \vline\tableau{ 4 & \cirbox{4} \\ 3 & 3 & 3 \\ } \hspace{1\cellsize}\longrightarrow\hspace{1\cellsize}
      \vline\tableau{ 4 \\ 3 & \cirbox{4} \\ & 3 & 3 } \hspace{1\cellsize}\longrightarrow\hspace{1\cellsize}
      \vline\tableau{ \cirbox{4} \\ 3 & 3 & 3 \\ & 4 } \hspace{1\cellsize}\longrightarrow\hspace{1\cellsize}
      \vline\tableau{ \\ 3 & 3 & \cirbox{3} \\ 4 & 4 } \hspace{1\cellsize}\longrightarrow\hspace{1\cellsize}
      \vline\tableau{ \\ 3 & 3  \\ 4 &  & 3 \\ & 4 } 
    \end{displaymath}
    \caption{\label{fig:spring_graph}The spring loaded graph on $\QKT(0,0,3,2)$.}
  \end{center}
\end{figure}

\begin{theorem}
  For a composition $a$ of length $n$, and $m \geq \eta(a)$, we have
  \begin{equation}
    0 < ^{\#} \QKT(a) < \cdots < ^{\#} \QKT(0^{\eta(a)} \times a) = \cdots = ^{\#} \QKT(0^{m} \times a) = ^{\#} \mathrm{SYT}(\sort(a)).
  \end{equation}
  \label{thm:monotone}
\end{theorem}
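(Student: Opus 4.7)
The plan is to analyze the entire chain via a single natural map. Define the \emph{shift-up} $\Phi_k : \QKT(0^k \times a) \to \QKT(0^{k+1} \times a)$ by incrementing every entry of a tableau by $1$ and translating every cell up one row. This is a weight-preserving injection whose image is precisely the set of those $T' \in \QKT(0^{k+1} \times a)$ having no cell in row $1$. Consequently, $\Phi_k$ is bijective precisely when no element of $\QKT(0^{k+1} \times a)$ occupies row $1$, and the theorem reduces to detecting when row $1$ first becomes inaccessible and pinning down the stable cardinality.

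For the equality regime, take $k \geq \eta(a)$, so that $\eta(0^{k+1} \times a) = \eta(a) - k - 1 \leq -1$. By Theorem~\ref{thm:lowest}, every (virtual) element of $\QKT(0^{k+1} \times a)$ has lowest occupied row at least $2$, hence $\Phi_k$ is bijective and all equalities from index $\eta(a)$ onward follow by iteration. To evaluate the common value, I apply the injection $\varphi$ of Theorem~\ref{thm:KT2QYT}: at $k = \eta(a)$ the fundamental-slide expansion of $\key_{0^k \times a}$ has just stabilized, so $\varphi$ is a bijection $\QKT(0^{\eta(a)} \times a) \to \QYT_{\eta(a)+n}(\sort(a))$. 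The inequality $\eta(a) + n \geq |a| - \max(a) + 1$ (which reduces to $\sigma(a) + n \geq \ell(a)$, a count of zeros) places us past the $\QYT$ stability threshold of Proposition~\ref{prop:schur-F}, after which the standard destandardization bijection $\QYT(\lambda) \leftrightarrow \mathrm{SYT}(\lambda)$ identifies the cardinality with $\#\mathrm{SYT}(\sort(a))$.

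For the strict-inequality regime $k + 1 \leq \eta(a)$, I must produce some $T' \in \QKT(0^{k+1} \times a)$ containing a cell in row $1$. When $k + 1 = \eta(a)$, Theorem~\ref{thm:lowest} delivers this directly: the extreme virtual QKT has lowest row $1 - \eta(0^{k+1} \times a) = 1$, which is automatically physical. For the harder range $k + 1 < \eta(a)$ I argue by a discrete intermediate-value principle on the spring-loaded push graph of Lemma~\ref{lem:connected}, extended in the obvious way to virtual tableaux. The key auxiliary claim is that every forward or reverse spring-loaded push changes the lowest row of a tableau by at most one: in both Case~(1) and Case~(2) of Definition~\ref{def:dual}, the bulk downward motion decreases the bottom row by $0$ or $1$, and the subsequent destandardization only translates cells upward and so cannot further decrease the minimum row; the reverse pushes constructed in the proof of Lemma~\ref{lem:connected} admit the same bound by a symmetric analysis. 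Given this bound, the Yamanouchi tableau (lowest row $k + 1 + \ell_0 \geq k + 2$, where $\ell_0 = \min\{i : a_i > 0\}$) and the extreme virtual tableau of Theorem~\ref{thm:lowest} (lowest row $k + 2 - \eta(a) \leq 0$) lie in a common connected component, so the lowest-row function must take every integer value in $[k + 2 - \eta(a),\, k + 1 + \ell_0]$ along any connecting path. In particular, the value $1$ is attained, and the corresponding tableau is a physical element of $\QKT(0^{k+1} \times a)$ outside the image of $\Phi_k$. The initial inequality $0 < \#\QKT(a)$ is immediate since the Yamanouchi of content $a$ always lies in $\QKT(a)$.

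The main obstacle I expect is rigorously verifying the one-step bound on the lowest row under spring-loaded pushes, in particular reconciling the bulk downward motion of Definition~\ref{def:dual} with the upward springback imposed by destandardization, together with the parallel analysis for the reverse pushes assembled in the proof of Lemma~\ref{lem:connected}. A closely related technicality is confirming that the connectivity statement of Lemma~\ref{lem:connected} extends to virtual QKTs with cells in rows $\leq 0$; this should go through by rerunning its proof without any implicit positivity assumption on row indices, since neither the push operation nor the destandardization map references absolute row numbers.
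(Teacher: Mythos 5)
Your proposal follows essentially the same route as the paper: equalities beyond $\eta(a)$ from Theorem~\ref{thm:lowest}, and strict inequalities below it via a discrete intermediate-value argument on the spring-loaded push graph of Lemma~\ref{lem:connected}, using the fact that each push changes the lowest occupied row by at most one. Your explicit shift-up map $\Phi_k$ and the identification of the stable count with $\#\mathrm{SYT}(\sort(a))$ via $\varphi$ are slightly more explicit than the paper's write-up, but the skeleton is identical.

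The one point you flag as an obstacle and do not close is a genuine gap as written: your justification of the one-step bound only controls the \emph{downward} direction (bulk motion drops rows by at most one, and destandardization cannot lower the minimum further). It does not rule out the lowest row rising by two or more under the springback, which would let a path in the graph jump over the value $1$ and break the intermediate-value argument. This is exactly the case the paper's proof handles with its merging analysis: after a push of row $i$, no two rows strictly below $i-1$ can merge under $\destand$; rows $i$ and $i-1$ may merge into a new row $i$, lifting everything below by one, and if row $i-2$ also merges then those cells must have come from row $i-1$ of the original tableau and been pushed down, so their net displacement is still only one row upward. That analysis (and its counterpart for the reverse pushes of Lemma~\ref{lem:connected}) is the missing ingredient; with it supplied, your argument is complete.
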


\begin{proof}
  By Theorem~\ref{thm:lowest}, we have $^{\#} \QKT(0^{m} \times a) = ^{\#} \QKT(0^{\eta(a)} \times a)$ for any $m \geq \eta(a)$, so it remains to show the interval property. Suppose $U \in \QKT(a)$ (possibly virtual) and that $T$ results from a spring loaded push of row $i$ for $U$. Then any row below $i$ is moved down at most one row in $T$. Furthermore, when springing back, no two rows weakly above $i$ will merge or move up, and no two rows strictly below $i-1$ will merge. Rows $i$ and $i-1$ might merge into a new row $i$ (if every cell in row $i$ left of $C$ is left of the leftmost cell in row $i-1$), which will move row $i-1$ and all rows below it up one row. If row $i-2$ merges with this as well, then it must have been part of row $i-1$ in $U$ and been pushed down, so the net move of these cells is up one row, with lower rows moving up at most one row as well. Therefore $U$ and $T$ have lowest rows at most one apart. 
The result now follows from Lemma~\ref{lem:connected}, which states that any element of $\QKT(a)$ can be obtained from the Yamanouchi one by a sequence of spring-loaded pushes.
\end{proof}

\begin{corollary}
  For any weak composition $a$, let $\eta = \eta(a)$. Then, for any $m \geq \eta$, we have
  \begin{equation}
    \key_{0^m \times a} = \sum_{b} [ \Fund_b \mid \key_{0^{\eta} \times a} ] \Fund_{0^{m-\eta} \times b},
  \end{equation}
  where $[ \Fund_b \mid \key_c ]$ denotes the coefficient of $\Fund_a$ in the fundamental slide expansion of $\key_c$. Moreover, if for some $n$ and for some $m>n$ we have
  \begin{equation}
    \key_{0^m \times a} = \sum_{b} [ \Fund_b \mid \key_{0^{n} \times a} ] \Fund_{0^{m-n} \times b},
  \end{equation}
  then $n \geq \eta$. In particular, for $\lambda = \sort(a)$, we have 
  \begin{equation}
    s_{\lambda}(X) = \sum_{b} [ \Fund_b \mid \key_{0^{\eta} \times a} ] F_{\flatten(b)}(X).
  \end{equation}
  \label{cor:tight}
\end{corollary}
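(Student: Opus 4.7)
The plan is to deduce all three assertions directly from the structural results already established in the section, with Theorem~\ref{thm:monotone} doing the heavy lifting for both stability and tightness, and Theorem~\ref{thm:stable-slide} together with Corollary~\ref{cor:stable-key} handling the stable limit.

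First I would handle the stability identity for $m \geq \eta$. The key observation is that there is an obvious weight-compatible injection $\iota : \QKT(0^\eta \times a) \hookrightarrow \QKT(0^m \times a)$ given by shifting every cell up by $m-\eta$ rows: the Kohnert tableau conditions and the quasi-Yamanouchi condition are preserved under a uniform vertical shift, and a tableau $T$ with $\wt(T) = b$ maps to one with weight $0^{m-\eta} \times b$. By Theorem~\ref{thm:monotone}, the cardinalities of $\QKT(0^\eta \times a)$ and $\QKT(0^m \times a)$ are equal, so $\iota$ is a bijection. Substituting into Theorem~\ref{thm:key-slide} then gives the asserted expansion
\[
 \key_{0^m \times a} = \sum_{T \in \QKT(0^\eta \times a)} \Fund_{0^{m-\eta} \times \wt(T)} = \sum_{b} [\Fund_b \mid \key_{0^\eta \times a}]\,\Fund_{0^{m-\eta} \times b}.
\]

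Next I would establish tightness by contradiction. Suppose the second displayed identity held for some $n < \eta$ and some $m > n$. Since the fundamental slide polynomials form a basis (they are upper-unitriangular with respect to monomials in dominance order, as noted in \cite{AS17}), equating this expansion with the one given by Theorem~\ref{thm:key-slide} forces the total number of terms, counted with multiplicity, to satisfy $^{\#}\QKT(0^m \times a) = {}^{\#}\QKT(0^n \times a)$. But Theorem~\ref{thm:monotone} gives the strict chain
\[
{}^{\#}\QKT(0^n \times a) < {}^{\#}\QKT(0^{n+1} \times a) \leq {}^{\#}\QKT(0^m \times a)
\]
for $n < \eta$, a contradiction. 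Hence $n \geq \eta$ as claimed.

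For the final equation, I would take $m \to \infty$ in the stability identity. By Corollary~\ref{cor:stable-key}, the left side tends to $s_{\sort(a)}(X) = s_\lambda(X)$, while on the right, Theorem~\ref{thm:stable-slide} gives $\lim_{m\to\infty} \Fund_{0^{m-\eta} \times b} = F_{\flatten(b)}(X)$ termwise. Since the sum is finite and indexed independently of $m$, the limit passes inside, yielding the desired Schur expansion.

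The main obstacle I anticipate is not in executing any of these steps but in making precise that Theorem~\ref{thm:monotone} together with the basis property of $\{\Fund_b\}$ forces equality of multisets of weights: one must be a bit careful that two distinct quasi-Yamanouchi Kohnert tableaux can contribute the same $\Fund_b$, so the correct invariant to compare is the \emph{sum of multiplicities}, which is simply the cardinality of $\QKT(0^m \times a)$. Once this is articulated, the strict monotonicity yields the contradiction immediately, and the rest of the argument is essentially formal.
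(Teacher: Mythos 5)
Your argument is correct and follows essentially the same route the paper intends: the corollary is stated as an immediate consequence of Theorems~\ref{thm:lowest} and~\ref{thm:monotone}, and your use of the shift injection together with the cardinality chain of Theorem~\ref{thm:monotone} for stability and tightness, plus Theorem~\ref{thm:stable-slide} and Corollary~\ref{cor:stable-key} for the stable limit, is exactly that deduction. The one detail to make explicit is that the shift map must also increment every label by $m-\eta$ so that the image genuinely has content $0^m\times a$ (otherwise conditions (i) and (ii) of Definition~\ref{def:kohnert} would fail); with that adjustment all the Kohnert tableau and quasi-Yamanouchi conditions are preserved as you claim.
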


%
%

\bibliographystyle{amsalpha} 
\bibliography{quasi_key}

\end{document}